\newtheorem{Theo}[subsubsection]{Theorem}
\newtheorem{Theor}{Theorem}
\newenvironment{hproof}{%
  \proof}{\endproof}
\newtheorem{Theore}{Theorem}
\newtheorem{cor}[subsubsection]{Corollary}
\theoremstyle{definition}
\newtheorem{Coro}{Corollary}
\newtheorem{Corol}{Corollary}
\newtheorem{fact}[subsubsection]{Fact} 
\newtheorem{rem}[subsubsection]{Remark}
\newtheorem{question}[subsubsection]{Question}
\newtheorem{Prop}[subsubsection]{Proposition}
\newtheorem{Obs}[subsubsection]{Observation}
\newtheorem{example}[subsubsection]{Example}
\newtheorem{Lemm}[subsubsection]{Lemma} 
\newtheorem{definition}[subsubsection]{Definition}
\newtheorem{ex}[subsection]{Exercise}
\DeclareSymbolFont{largesymbols}{OMX}{yhex}{m}{n}
\DeclareMathAccent{\widetilde}{\mathord}{largesymbols}{"65}
\newcommand{\bp}{\begin{Prop}}
\newcommand{\ep}{\end{Prop}}
\newcommand{\bl}{\begin{Lemm}}
\newcommand{\el}{\end{Lemm}}
\newcommand{\bex}{\begin{ex} \rm}
\newcommand{\eex}{\end{ex}}
\newcommand{\bt}{\begin{Theo}}
\newcommand{\et}{\end{Theo}}
\newcommand{\bq}{\begin{question}}
\newcommand{\eq}{\end{question}}
\newcommand{\bc}{\begin{cor}}
\newcommand{\ec}{\end{cor}}
\newcommand{\bob}{\begin{Obs}}
\newcommand{\eob}{\end{Obs}}
\newcommand{\nc}{\newcommand}
\nc{\renc}{\renewcommand}
\nc{\ssec}{\subsection}
\nc{\sssec}{\subsubsection} 
\nc\ol{\overline}
\nc\wt{\widetilde}
\nc\wh{\widehat}
\nc\tboxtimes{\wt{\boxtimes}}
\renc{\d}{{\delta}}
\nc{\Aa}{{\mathbb{A}}}
\nc{\Bb}{{\mathbb{B}}}
 \nc{\Gg}{{\mathbb{G}}}  
\nc{\Hh}{{\mathbb{H}}}
 \nc{\Nn}{{\mathbb{N}}}
\nc{\Pp}{{\mathbb{P}}}
\nc{\Rr}{{\mathbb{R}}}
\newcommand{\F}{\mathbb{F}}
\nc{\BV}{{\mathbb{V}}}
\nc{\BW}{{\mathbb{W}}}
\newcommand{\Z}{\mathbb{Z}}
\newcommand{\N}{\mathbb{N}}
\nc{\Qq}{{\mathbb{Q}}}
\nc{\Ss}{{\mathbb{S}}}
\nc{\Cc}{{\mathbb{C}}}
\nc{\Ff}{{\mathbb{F}}}
 \nc{\EL}{{L_\infty}}
\nc{\CA}{{\mathcal{A}}}
\nc{\CB}{{\mathcal{B}}}
\nc{\CE}{{\mathcal{E}}}
\nc{\CF}{{\mathcal{F}}}
\nc{\Las}{\mathsf{Las}}
\nc{\CG}{{\mathcal{G}}}
\nc{\CL}{{\mathcal{L}}}
\nc{\CC}{{\mathcal{C}}}
\nc{\CM}{{\mathcal{M}}}
\nc{\CN}{{\mathcal{N}}}
\nc{\Oog}{{\mathbb{O}}}
\nc{\Oo}{{\mathcal{O}}}
\nc{\CP}{{\mathcal{P}}}
\nc{\CQ}{{\mathcal{Q}}}
\nc{\CR}{{\mathcal{R}}}
\nc{\CS}{{\mathcal{S}}}
\nc{\CT}{{\mathcal{T}}}
\nc{\CU}{{\mathcal{P}}}
\nc{\CV}{{\mathcal{V}}}
\nc{\CW}{{\mathcal{W}}}
\nc{\CZ}{{\mathcal{Z}}}
\nc{\cM}{{\check{\mathcal M}}{}}
\nc{\csM}{{\check{\mathcal A}}{}}
\nc{\oM}{{\overset{\circ}{\mathcal M}}{}}
\nc{\obM}{{\overset{\circ}{\mathbf M}}{}}
\nc{\oCA}{{\overset{\circ}{\mathcal A}}{}}
\nc{\obA}{{\overset{\circ}{\mathbf A}}{}}
\nc{\ooM}{{\overset{\circ}{M}}{}}
\nc{\osM}{{\overset{\circ}{\mathsf M}}{}}
\nc{\vM}{{\overset{\bullet}{\mathcal M}}{}}
\nc{\nM}{{\underset{\bullet}{\mathcal M}}{}}
\nc{\oD}{{\overset{\circ}{\mathcal D}}{}}
\nc{\obD}{{\overset{\circ}{\mathbf D}}{}}
\nc{\oA}{{\overset{\circ}{\mathbb A}}{}}
\nc{\op}{{\overset{\bullet}{\mathbf p}}{}}
\nc{\cp}{{\overset{\circ}{\mathbf p}}{}}
\nc{\oU}{{\overset{\bullet}{\mathcal U}}{}}
\nc{\oZ}{{\overset{\circ}{\mathcal Z}}{}}
\nc{\ofZ}{{\overset{\circ}{\mathfrak Z}}{}}
\nc{\oF}{{\overset{\circ}{\fF}}}
\nc{\fa}{{\mathfrak{a}}}
\nc{\fb}{{\mathfrak{b}}}
\nc{\fg}{{\mathfrak{g}}}
\nc{\fgt}{{\fg}_!}
\nc{\fgl}{{\mathfrak{gl}}}
\nc{\fh}{{\mathfrak{h}}}
\nc{\fj}{{\mathfrak{j}}}
\nc{\fm}{{\mathfrak{m}}}
\nc{\ft}{{\mathfrak{t}}}
\nc{\fn}{{\mathfrak{n}}}
\nc{\fu}{{\mathfrak{u}}}
\nc{\fp}{{\mathfrak{p}}}
\nc{\fr}{{\mathfrak{r}}}
\nc{\fs}{{\mathfrak{s}}}
\nc{\fsl}{{\mathfrak{sl}}}
\nc{\hsl}{{\widehat{\mathfrak{sl}}}}
\nc{\hgl}{{\widehat{\mathfrak{gl}}}}
\nc{\hg}{{\widehat{\mathfrak{g}}}}
\nc{\chg}{{\widehat{\mathfrak{g}}}{}^\vee}
\nc{\hn}{{\widehat{\mathfrak{n}}}}
\nc{\chn}{{\widehat{\mathfrak{n}}}{}^\vee}
\nc{\fA}{{\mathfrak{A}}}
\nc{\fB}{{\mathfrak{B}}}
\nc{\fD}{{\mathfrak{D}}}
\nc{\fE}{{\mathfrak{E}}}
\nc{\fF}{{\mathfrak{F}}}
\nc{\fG}{{\mathfrak{G}}}
\nc{\fK}{{\mathfrak{K}}}
\nc{\fL}{{\mathfrak{L}}}
\nc{\fM}{{\mathfrak{M}}}
\nc{\fN}{{\mathfrak{N}}}
\nc{\fP}{{\mathfrak{P}}}
\nc{\fU}{{\mathfrak{U}}}
\nc{\fV}{{\mathfrak{V}}}
\nc{\fZ}{{\mathfrak{Z}}}
\newcommand{\Q}{\mathbb{Q}}
\nc{\bb}{{\mathbf{b}}}
\nc{\bd}{\partial}
\nc{\be}{{\mathbf{e}}}
\nc{\bj}{{\mathbf{j}}}
\nc{\bn}{{\mathbf{n}}}
\nc{\bF}{{\mathbf{F}}}
\nc{\bu}{{\mathbf{u}}}
\nc{\bv}{{\mathbf{v}}}
\nc{\bx}{{\mathbf{x}}}
\nc{\bs}{{\mathbf{s}}}
\nc{\by}{{\bar{y}}}
\nc{\bw}{{\mathbf{w}}}
\nc{\bA}{{\mathbf{A}}}
\nc{\bK}{{\mathbf{K}}}
\nc{\bI}{{\mathbf{I}}}
\nc{\bB}{{\mathbf{B}}}
\nc{\bG}{{\mathbf{G}}}
\nc{\bD}{{\mathbf{D}}}
\nc{\bP}{{\mathbf{P}}}
\nc{\bH}{{\mathbf{H}}}
\nc{\bM}{{\mathbf{M}}}
\nc{\bN}{{\mathbf{N}}}
\nc{\bV}{{\mathbf{V}}}
\nc{\bU}{{\mathbf{U}}}
\nc{\bL}{{\mathbf{L}}}
\nc{\bW}{{\mathbf{W}}}
\nc{\bX}{{\mathbf{X}}}
\nc{\bY}{{\mathbf{Y}}}
\nc{\bZ}{{\mathbf{Z}}}
\nc{\bS}{{\mathbf{S}}}
\nc{\bSi}{{\bar{\Sigma}}}
\nc{\sA}{{\mathsf{A}}}
\nc{\sB}{{\mathsf{B}}}
\nc{\sC}{{\mathsf{C}}}
\nc{\sD}{{\mathsf{D}}}
\nc{\sF}{{\mathsf{F}}}
\nc{\sG}{{\mathsf{G}}}
\nc{\sK}{{\mathsf{K}}}
\nc{\sM}{{\mathsf{M}}}
\nc{\sO}{{\mathsf{O}}}
\nc{\sQ}{{\mathsf{Q}}}
\nc{\sP}{{\mathsf{P}}}
\nc{\sZ}{{\mathsf{Z}}}
\nc{\sfp}{{\mathsf{p}}}
\nc{\sr}{{\mathsf{r}}}
\nc{\sg}{{\mathsf{g}}}
\nc{\sff}{{\mathsf{f}}}
\nc{\sfb}{{\mathsf{b}}}
\nc{\sfc}{{\mathsf{c}}}
\nc{\sd}{{\ltimes}} 
\nc{\tH}{{\widetilde{H}}}
\nc{\tA}{{\widetilde{\mathbf{A}}}}
\nc{\tB}{{\widetilde{\mathcal{B}}}}
\nc{\tg}{{\widetilde{\mathfrak{g}}}}
\nc{\tG}{{\widetilde{G}}}
\nc{\TM}{{\widetilde{\mathbb{M}}}{}}
\nc{\tO}{{\widetilde{\mathsf{O}}}{}}
\nc{\tU}{\widetilde{U}}
\nc{\TZ}{{\tilde{Z}}}
\nc{\tx}{{\tilde{x}}}
\nc{\tq}{{\tilde{q}}}
\nc{\R}{\mathbb{R}}
\nc{\tfP}{{\widetilde{\mathfrak{P}}}{}}
\nc{\tz}{{\tilde{\zeta}}}
\nc{\tmu}{{\tilde{\mu}}}
  \nc{\vol}{{\mathop{\operatorname{\rm vol\,}}}}
  \nc{\gal}{{\mathop{\operatorname{\rm Gal\,}}}}
  \nc{\cl}{{\mathop{\operatorname{\rm cl}}}}
  \nc{\disc}{{\mathop{\operatorname{\rm disc}}}}
  \nc{\Sym}{{\mathop{\operatorname{\rm Sym}}}}
   \nc{\Aut}{{\mathop{\operatorname{\rm Aut}}}}
 \nc{\Spec}{{\mathop{\operatorname{\rm Spec}}}}
  \nc{\spec}{{\mathop{\operatorname{\rm Spec}}}}
\nc{\Ker}{{\mathop{\operatorname{\rm Ker}}}}
 \nc{\dom}{{\mathop{\operatorname{\rm dom}}}}
\nc{\End}{{\mathop{\operatorname{\rm End}}}}
 \nc{\Hom}{\operatorname{\Hom}}
 \nc{\GL}{{\mathop{\operatorname{\rm GL}}}}
 \nc{\Id}{{\mathop{\operatorname{\rm Id}}}}
 \nc{\rk}{{\mathop{\operatorname{\rm rk}}}}
 \nc{\length}{{\mathop{\operatorname{\rm length}}}}
\nc{\supp}{{\mathop{\operatorname{\rm supp} \, }}}
\nc{\val}{{\rm val}}
\nc{\res}{{\mathop{\operatorname{\rm res}}}}
\def\Ind#1#2#3{{#1} {\downarrow}_{#3} {#2} }
\nc{\seq}[1]{\stackrel{#1}{\sim}}
\def\beq#1{\begin{equation} \label{ #1}}
\def\eeq{\end{equation}}
\def\prf{\begin{proof}}
\def\pv{\end{proof} }
 \def\eprf{\end{proof} }
 \renc{\b}{{\beta}}
\def\Ind#1#2{#1\setbox0=\hbox{$#1x$}\kern\wd0\hbox to 0pt{\hss$#1\mid$\hss}
\lower.9\ht0\hbox to 0pt{\hss$#1\smile$\hss}\kern\wd0}
\def\@setthanks{\vspace{-\baselineskip}\def\thanks##1{\@par##1\@addpunct.}\thankses}
\title{Decidability via the tilting correspondence}
\author{Konstantinos Kartas}
\thanks{During this research, the author was funded by EPSRC grant EP/20998761 and was also supported by the Onassis Foundation - Scholarship ID: F ZP 020-1/2019-2020.}
\newcommand{\Addresses}{{% additional braces for segregating \footnotesize
  \bigskip
  \footnotesize

\textsc{Mathematical Institute, Woodstock Road, Oxford OX2 6GG.}\par\nopagebreak
  \textit{E-mail address}: \texttt{kartas@maths.ox.ac.uk}
}
}
\begin{document}
%\today
\maketitle

\begin{abstract}
We prove a relative decidability result for perfectoid fields. This applies to show that the fields $\Q_p(p^{1/p^{\infty}})$ and $\Q_p(\zeta_{p^{\infty}})$ are (existentially) decidable relative to the perfect hull of $ \F_p(\!(t)\!)$ and $\Q_p^{ab}$ is (existentially) decidable relative to the perfect hull of $\overline{ \F}_p(\!(t)\!)$. We also prove some unconditional decidability results in mixed characteristic via reduction to characteristic $p$.
\end{abstract}
\setcounter{tocdepth}{1}
\tableofcontents
\section*{Introduction} 

After the decidability of the $p$-adic numbers $\Q_p$ was established by Ax-Kochen \cite{AK} and independently by Ershov \cite{Ershov}, several decidability questions about local fields and their extensions have been raised and answered:
\begin{itemize}
\item In mixed characteristic, Kochen \cite{Kochen} showed that $\Q_p^{ur}$, the maximal unramified extension of $\Q_p$, is decidable in the language $L_{\text{val}}=\{0,1,+,\cdot,\Oo\}$ (see notation). More generally, by work of \cite{Ziegler}, \cite{Ershov}, \cite{Bas2}, \cite{Bel} and more recently \cite{AJ}, \cite{Junguk} and \cite{Lee}, we have a good understanding of the model theory of unramified and finitely ramified mixed characteristic henselian fields. 
\item In positive characteristic, our understanding is much more limited. Nevertheless, by work of Denef-Schoutens \cite{Den}, we know that $\F_p(\!(t)\!)$ is \textit{existentially} decidable in $L_{\text{val}}(t)=\{0,1,t,+,\cdot,\Oo\}$ (see notation), modulo resolution of singularities.\footnote{ \label{definabilityfoot} The formalism of \cite{Den} does not include a unary predicate $\Oo$ for the valuation ring. This does not make a difference because of the equivalences $x\in \Oo \leftrightarrow \exists y(y^2=1+t\cdot x^2)$ (for $p>2$; replace squares with cubes for $p=2$) and $x\notin \Oo \leftrightarrow x^{-1}\in t\cdot \Oo$.} In fact, Theorem 4.3 \cite{Den} applies to show that (assuming resolution) any finitely ramified extension of $\F_p(\!(t)\!)$ is existentially decidable relative to its residue field. Anscombe-Fehm \cite{AnscombeFehm} showed \textit{unconditionally} that $\F_p(\!(t)\!)$ is existentially decidable but in the language $L_{\text{val}}$, which does not include a constant symbol for $t$.
\end{itemize}

Our understanding is less clear for \textit{infinitely ramified} fields and there are many extensions of $\Q_p$ (resp. $\F_p(\!(t)\!)$) of great arithmetic interest, whose decidability problem is still open: 
\begin{itemize}
\item 
In mixed characteristic, these include $\Q_p^{ab}$, the maximal abelian extension of $\Q_p$ and the totally ramified extension $\Q_p(\zeta_{p^{\infty}})$, obtained by adjoining all $p^n$-th roots of unity. These extensions had already been discussed in Macintyre's survey on pg.140 \cite{Mac} and a conjectural axiomatization of $\Q_p^{ab}$ was given by Koenigsmann on pg.55 \cite{Koen}. Another interesting extension is $\Q_p(p^{1/p^{\infty}})$, a totally ramified extension of $\Q_p$ obtained by adjoining a compatible system of $p$-power roots of $p$. 
\item In positive characteristic, two very natural infinitely ramified fields are the perfect hulls of $\F_p(\!(t)\!)$ and $\overline{\F}_p(\!(t)\!)$. Both of these fields have been conjectured to be decidable (see pg. 4 \cite{KuhlBla} for the former). The recent work of Kuhlmann-Rzepka \cite{KuhlBla} ultimately aims at extending Kuhlmann's earlier results for tame fields to cover fields like the ones mentioned above. 
\end{itemize}
The above fields will be the main objects of interest throughout the paper. Their $p$-adic (resp. $t$-adic) completions are typical examples of \textit{perfectoid fields} in the sense of Scholze \cite{Scholze} (see \S \ref{localapprox}). A perfectoid field $(K,v)$ is a valued field which is complete with respect to a non-discrete valuation of rank 1, with residue characteristic equal to $p>0$ and such that the Frobenius map $\Phi:\Oo_K/(p)\to \Oo_K/(p):x\mapsto x^p$ is surjective. Loosely speaking, the last condition says that one can extract approximate $p$-power roots of any element in the field. For any perfectoid field $K$, one can define its tilt $K^{\flat}$ (see \S \ref{perfsec}), which intuitively is its \textit{local} function field analogue and serves as a good characteristic $p$ approximation of $K$. In practice, this means that one can often reduce arithmetic problems about $K$ to arithmetic problems about $K^{\flat}$. This kind of transfer principle, which works for a fixed $p$, should be contrasted with the Ax-Kochen/Ershov principle which achieves such a reduction only \textit{asymptotically}, i.e. with the residue characteristic $p\to \infty$. This is explained in detail in Scholze's ICM report (see pg.2 \cite{ScholzeICM}).\\

Our main goal is to set the stage for incorporating ideas from perfectoid geometry in the model theory of henselian fields. In the present paper we focus on decidability, although it is conceivable that our methods can be used in different model-theoretic contexts. 
We will prove the following relative decidability result for the fields discussed above:
\begin{Coro} \label{mainsCor}
$(a)$ Assume $\F_p(\!(t)\!)^{1/p^{\infty}}$ is decidable (resp. $\exists$-decidable) in $L_{\text{val}}(t)$. Then $\Q_p(p^{1/p^{\infty}})$ and $\Q_p(\zeta_{p^{\infty}})$ are decidable (resp. $\exists$-decidable) in $L_{\text{val}}$.\\
$(b)$ Assume $\overline{ \F}_p(\!(t)\!)^{1/p^{\infty}}$ is decidable (resp. $\exists$-decidable) in $L_{\text{val}}(t)$. Then $\Q_p^{ab}$ is decidable (resp. $\exists$-decidable) in $L_{\text{val}}$.
\end{Coro}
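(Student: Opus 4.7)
The plan is to derive the corollary from the paper's main relative decidability theorem for perfectoid fields, which (as suggested by the introduction) reduces the decidability of a perfectoid field $K$ in $L_{\text{val}}$ to that of its tilt $K^{\flat}$ in $L_t$. Once this general transfer principle is in place, proving the corollary amounts to two concrete tasks: identifying the tilts of the completions of the three mixed-characteristic fields in question, and bridging the gap between each perfect hull and its $t$-adic completion.

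First, I would pass to the $p$-adic completions $\widehat{\Q_p(p^{1/p^\infty})}$, $\widehat{\Q_p(\zeta_{p^\infty})}$, $\widehat{\Q_p^{ab}}$, which are perfectoid fields. Fontaine's classical tilt computation then yields the following identifications. Choosing the pseudo-uniformizer $t^{\flat}=(p,p^{1/p},p^{1/p^{2}},\ldots)$ gives
$$\widehat{\Q_p(p^{1/p^\infty})}^{\flat}\ \cong\ \widehat{\F_p((t))^{1/p^\infty}},$$
and the cyclotomic tower, via $t^{\flat}\leftrightarrow(\zeta_{p^n}-1)_n$, yields the same tilt for $\Q_p(\zeta_{p^\infty})$. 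For the maximal abelian extension one uses $\Q_p^{ab}=\Q_p^{ur}\cdot\Q_p(\zeta_{p^\infty})$ to obtain
$$\widehat{\Q_p^{ab}}^{\flat}\ \cong\ \widehat{\overline{\F}_p((t))^{1/p^\infty}}.$$

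Second, I would show that decidability (resp.\ $\exists$-decidability) of $\F_p((t))^{1/p^\infty}$ in $L_t$ implies the same for its $t$-adic completion. The existential direction follows from density combined with henselianity via a Krasner-style approximation argument. For full decidability one invokes that both fields are henselian tame fields of equal characteristic sharing value group $\Z[1/p]$ and residue field $\F_p$, so Kuhlmann's Ax-Kochen-Ershov theorem for tame fields \cite{Kuhl} yields their elementary equivalence (first in $L_{\text{val}}$, and then in $L_t$ since the distinguished element $t$ is common). An identical argument handles $\overline{\F}_p((t))^{1/p^\infty}$. A symmetric density/henselianity step then transfers decidability from each completion $\widehat{K}$ back to $K$ itself in $L_{\text{val}}$.

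The main obstacle lies upstream of the corollary: it is the paper's central theorem asserting that decidability of $K^{\flat}$ implies decidability of $K$ for a perfectoid $K$. One expects its proof to construct a recursive translation of $L_{\text{val}}$-sentences about $K$ into $L_t$-sentences about $K^{\flat}$, exploiting the definable bijection $\Oo_K/p\cong\Oo_{K^{\flat}}/t^{\flat}$ furnished by the tilting correspondence together with Krasner-type approximation to reduce first-order behaviour of $K$ to information modulo a bounded power of $p$. Granted this theorem, the corollary becomes essentially a bookkeeping exercise around the classical tilt computations and the density/tameness transfers described above.
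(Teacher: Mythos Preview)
Your overall plan---pass to completions, identify the tilts, invoke the main theorem---has the right shape, but the bridge from $\F_p((t))^{1/p^\infty}$ to its $t$-adic completion contains a genuine gap. You propose to establish their elementary equivalence via Kuhlmann's Ax--Kochen--Ershov theorem for tame fields, but neither field is tame: tameness requires algebraic maximality, and the Artin--Schreier equation $x^p-x=1/t$ already produces a proper immediate algebraic extension of both (a root is $\sum_{n\ge 1} t^{-1/p^n}$, which lies in neither the perfect hull nor its completion). So Kuhlmann's theorem does not apply, and the full elementary equivalence $\F_p((t))^{1/p^\infty}\equiv \widehat{\F_p((t))^{1/p^\infty}}$ in $L_t$ is not available by that route. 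Your existential transfer via ``density plus henselianity'' is more plausible, but would still need a proper argument that $F\preceq_\exists\widehat F$ with the parameter $t$.

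The paper sidesteps this entirely. Rather than first moving from $F$ to $\widehat{F}$ and then applying the main theorem to the perfectoid $\widehat{K}$, it proves a variant (Corollary~\ref{reldecgen}) that works directly with the uncompleted $F$: because the distinguished element $\xi$ has Witt coordinates in $R_0=\F_p[t]\subseteq\Oo_F$, one gets $\Oo_K/(p^n)\cong W_n(\widehat{\Oo}_F)/(\bar\xi)\cong W_n(\Oo_F)/(\bar\xi)$, and the right-hand side is uniformly interpretable in $(F,w)$ in $L_t$. Van den Dries' mixed-characteristic result (Theorem~\ref{thO/p^nO}) then reduces decidability of $(K,v)$ to uniform decidability of $(\Oo_K/(p^n))_n$, so no comparison of $F$ with $\widehat{F}$ is ever needed. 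Note also that your informal statement of the upstream theorem as ``decidability of $K^\flat$ implies decidability of $K$'' omits the essential hypothesis that $K$ be an $R_0$-\emph{computable} untilt; the paper shows (Proposition~\ref{scholzprop}, Remark~\ref{naive}) that without this hypothesis the implication is false.
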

As usual, we write $\F_p(\!(t)\!)^{1/p^{\infty}}$ (resp. $\overline{ \F}_p(\!(t)\!)^{1/p^{\infty}}$) for the perfect hull of $\F_p(\!(t)\!)$ (resp. $\overline{\F}_p(\!(t)\!)$), $L_{\text{val}}$ for the language of valued fields and $L_{\text{val}}(t)$ for the language $L_{\text{val}}$ enriched with a constant symbol for $t$ (see notation). Corollary \ref{mainCor} is essentially a special case of a general relative decidability result for perfectoid fields, which is discussed next. (Strictly speaking, the fields in Corollary \ref{mainsCor} are not perfectoid but one can still derive Corollary \ref{mainsCor} from Theorem \ref{reldec} directly.)

Let $F$ be a perfectoid field of characteristic $p$ (e.g., $F=\widehat{\F_p(\!(t)\!)^{1/p^{\infty}}}$). An \textit{untilt} of $F$ is a mixed characteristic perfectoid field $K$ together with an isomorphism $\iota:K^{\flat}\stackrel{\cong}\to F$. In general, there will be many non-isomorphic untilts of $F$. In fact, there will be too many untilts even up to elementary equivalence (see Proposition \ref{scholzprop}$(b)$), thus shattering the naive guess of $K$ being decidable relative to its tilt $K^{\flat}$. On a more elementary level, ones needs to assume decidability with \textit{parameters} on the positive characteristic side (see Example \ref{lang}). Nevertheless, a relative decidability result for perfectoids can be salvaged by asking that $K$ be an $R_0$-\textit{computable} untilt of $F$, a notion which is briefly explained below (see \S \ref{compunt} for details).

Write $W(\Oo_F)$ for the ring of Witt vectors over $\Oo_F$ (see \S \ref{Witt}). An element $\xi \in W(\Oo_F)$ is identified with its associated Witt vector, which is an infinite sequence $(\xi_0,\xi_1,...)$ with $\xi_n\in \Oo_F$. Whenever there is a \textit{computable} subring $R_0\subseteq \Oo_F$ such that $\xi_n\in R_0$ and the function $\N\to R_0:n\mapsto \xi_n$ is \textit{recursive}, we say that $\xi$ is $R_0$-computable. As usual, a computable ring $R_0$ is one whose underlying set is (or may be indetified with) a recursive subset of $\N$, so that the ring operations are (or are identified with) recursive functions (e.g., $\F_p[t]$ is a computable ring). An important result by Fargues-Fontaine gives a 1-1 correspondence between untilts of $F$ and certain principal ideals of $W(\Oo_F)$ (Theorem \ref{Font}). An untilt $K$ of $F$ is then said to be an $R_0$-\textit{computable untilt} whenever there exists an $R_0$-computable generator $\xi \in W(\Oo_F)$ of the ideal corresponding to $K$. We then have the following relative decidability result (the name is due to Scanlon):
\begin{Theore} [Perfectoid transfer]\label{reldec}
Suppose $K$ is an $R_0$-computable untilt of $F$. If $F$ is decidable in $L_{\text{val}}(R_0)$, then $K$ is decidable in $L_{\text{val}}$.
\end{Theore}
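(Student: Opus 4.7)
The plan is to produce an effective translation $T$ of $L_{\text{val}}$-sentences over $K$ into $L_{\text{val}}(R_0)$-sentences over $F$ such that $K\models \phi$ iff $F\models T(\phi)$, with $T$ recursive. Given such a $T$, the decidability of $K$ in $L_{\text{val}}$ follows immediately from the hypothesized decidability of $F$ in $L_{\text{val}}(R_0)$ by a computable reduction of decision problems. The backbone of the translation is the Fargues--Fontaine identification $\Oo_K \cong W(\Oo_F)/\xi W(\Oo_F)$ (Theorem \ref{Font}), which recasts the arithmetic of $K$ in terms of Witt vectors over $\Oo_F$ modulo a principal ideal whose generator is $R_0$-recursive.

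First I would set up the translation at each finite level. Since $\Oo_F$ is perfect (any $y\in F$ with $y^p\in\Oo_F$ satisfies $|y|\leq 1$), the canonical map $W(\Oo_F)/p^n\to W_n(\Oo_F)$ is an isomorphism, and the ring operations of $W_n(\Oo_F)$ are polynomial in its $n$ coordinates via the universal Witt polynomials. Hence $W_n(\Oo_F)$ is interpretable in the ring $\Oo_F$, uniformly in $n$. Quotienting by the principal ideal generated by $\bar\xi := (\xi_0,\ldots,\xi_{n-1})$ yields $\Oo_K/p^n$, and because $\xi$ is $R_0$-computable, the tuple $\bar\xi$ lies in $R_0^n$ and is recursive in $n$. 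This produces a uniformly recursive interpretation of the family $\{\Oo_K/p^n\}_{n\geq 1}$ in $F$ with $R_0$-parameters.

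Next I would promote this finite-level data to a translation at the level of $L_{\text{val}}$-sentences over $K$ itself. Here one invokes the local approximation techniques developed in \S \ref{localapprox}, which are the model-theoretic incarnation of the tilting correspondence: a perfectoid field $K$ is controlled, in a precise first-order sense, by its tilt $F$ together with the untilt datum $\xi$, and arithmetic statements about $K$ can be expressed in terms of its finite-length quotients $\Oo_K/p^n$ for sufficient $n$. Composing the local approximation translation with the finite-level interpretation of the previous step yields the required recursive map $T$.

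The main obstacle is precisely this promotion step. For existential sentences the argument is comparatively soft: a witness in $K$ can be scaled into $\Oo_K$ and approximated by an element of $W_n(\Oo_F)/\bar\xi$ for a bounded $n$ depending on the formula, with exact witnesses recovered by Hensel's lemma and $p$-adic completeness. For arbitrary (not merely existential) sentences one needs an Ax--Kochen--Ershov-type principle for perfectoid fields, asserting that $\mathrm{Th}_{L_{\text{val}}}(K)$ is recursively axiomatized in terms of $\mathrm{Th}_{L_{\text{val}}(R_0)}(F)$ together with the recursive untilt datum $\xi$. Establishing the correctness (completeness) and effectivity of this axiomatization is where the technical heart of the proof lies, and is the step I would expect to be the most delicate.
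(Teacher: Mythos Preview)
Your finite-level step is correct and matches the paper: Lemma~\ref{trunclemma} together with Lemma~\ref{lemainter} give uniformly recursive $\exists^+$-interpretations of the local rings $(\Oo_K/(p^n),\mathfrak{m}_n)$ in $((\Oo_F,\mathfrak{m}_F),\xi_0,\dots,\xi_{n-1})$, and the $R_0$-computability of $\xi$ is exactly what lets you name these parameters in $L_{\text{val}}(R_0)$ uniformly in $n$ (this is the Claim inside the proof of Theorem~\ref{reldec2}).

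The gap is in your ``promotion'' step. You cite \S\ref{localapprox} for a first-order reduction of statements about $K$ to statements about its finite quotients, but \S\ref{localapprox} is the perfectoid section and contains no such result. The tool the paper actually uses is van den Dries' AKE theorem for arbitrary mixed-characteristic henselian fields (Theorem~\ref{thO/p^nO} in \S\ref{sec1}), packaged as Corollary~\ref{vddcor}: if the sequence $(\Oo_K/(p^n))_{n\in\omega}$ is \emph{uniformly} decidable in $L_{\text{r}}$ and $(vK,vp)$ is decidable in $L_{\text{oag}}$ with a constant for $vp$, then $(K,v)$ is decidable in $L_{\text{val}}$. This is not a principle specific to perfectoid fields, and it does not need to be ``established'' as your last paragraph suggests; it is supplied in \S\ref{sec1} as a preliminary, and once you have it the conclusion is immediate. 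You also omit the value-group ingredient entirely: the paper treats $(vK,vp)$ separately via Proposition~\ref{lemthma}(a), using the identification $(vK,vp)\cong(wF,w\xi_0)$ from Lemma~\ref{lemscholz}(b).

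A secondary point: your framing as a direct recursive translation $T$ from $L_{\text{val}}$-sentences to $L_{\text{val}}(R_0)$-sentences with $K\models\phi\iff F\models T(\phi)$ is not how the argument runs, and no such interpretation of $K$ in $F$ is produced (already $\Oo_K$ has infinitely many Witt coordinates over $\Oo_F$). What one obtains instead is a recursively enumerable \emph{complete} axiomatization of $(K,v)$, assembled from henselianity, the uniformly decidable family of quotient-ring theories, and the value-group theory; completeness is precisely the content of Theorem~\ref{thO/p^nO}. Decidability then follows from r.e.\ completeness, not from a sentencewise reduction.
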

Here $L_{\text{val}}(R_0)$ is the language $L_{\text{val}}$ enriched with constant symbols for the elements of $R_0$. We also obtain an \textit{existential} version of Theorem \ref{reldec} in \S \ref{mainthmsec}. The condition on $K$ being an $R_0$-computable untilt of $F$ is true in virtually all cases of interest (although generically false), even for a natural choice of $R_0$. Corollary \ref{mainCor} is an immediate consequence of Theorem \ref{reldec}, by considering the $t$-adic completion of $\F_p(\!(t)\!)^{1/p^{\infty}}$ (resp. $\overline{ \F}_p(\!(t)\!)^{1/p^{\infty}}$ for part $(b)$) and $R_0=\F_p[t]$. The two key ingredients in the proof of Theorem \ref{reldec} are: 
\begin{enumerate}
\item An (unpublished) Ax-Kochen/Ershov style result by van den Dries for general mixed characteristic henselian fields, presented in \S \ref{sec1}.
\item The Fargues-Fontaine correspondence described above, presented in \S \ref{Untilting}.
\end{enumerate}
Let us now outline how these two ingredients will be combined towards the proof of Theorem \ref{reldec}. The result by van den Dries will enable us to reduce the decidability of $K$ to the \textit{uniform} decidability of the sequence of residue rings $(\Oo_K/(p^n))_{n\in \omega}$ and also the decidability of $(\Gamma_v,vp)$. The Fargues-Fontaine correspondence then gives us a way to interpret each individual residue ring $\Oo_K/(p^n)$ inside the tilt $F$ (with parameters from $R_0$), via an interpretation $E_n$. The assumption on $K$ being $R_0$-computable will then imply that the sequence of interpretations $(E_n)_{n\in \omega}$ is \textit{uniformly recursive} (see \S \ref{uniformrec}). Having assumed the decidability of $F$ in $L_{\text{val}}(R_0)$, this will then give us that the sequence $(\Oo_K/(p^n))_{n\in \omega}$ is uniformly decidable. The pointed value group $(\Gamma_v,vp)$ is easily seen to be interpretable in $F$ (with parameters from $R_0$) and the decidability of $K$ follows.  

It should be mentioned that---prior to this work---Rideau-Scanlon-Simon had made a formative observation, namely that the Fargues-Fontaine correspondence already provides us with an interpretation of the valuation ring $\Oo_K$ in $\Oo_F$ (with parameters) in the sense of continuous logic. This however does not yield an interpretation in the sense of ordinary first-order logic (see \S \ref{rideauscanlon}). On the other hand, there do exist honest first-order interpretations of the \textit{truncated} residue rings $\Oo_K/(p^n)$ in $\Oo_F$, which is why it is convenient to work with them instead.

Theorem \ref{reldec} allows us to reduce decidability problems from the mixed characteristic to the positive characteristic world. Although our model-theoretic understanding of the latter is notoriously limited, this method still yields several applications. In \S \ref{tamesec} and \S \ref{cong} we obtain new \textit{unconditional} decidability results in mixed characteristic via reduction to characteristic $p$. In \S \ref{tamesec}, we prove the following:
\begin{Coro} \label{mainTame}
The valued field $(\Q_p(p^{1/p^{\infty}}),v_p)$ (resp. $(\Q_p(\zeta_{p^{\infty}}),v_p)$) admits a maximal immediate extension which is decidable in $L_{\text{val}}$.
\end{Coro}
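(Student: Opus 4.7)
Set $K = \Q_p(p^{1/p^\infty})$; the case $K = \Q_p(\zeta_{p^\infty})$ is entirely analogous, up to a change of uniformizer on the tilt side. The plan is to exhibit a maximal immediate extension $\mathcal K$ of $K$ as the untilt of a maximal immediate extension of the tilt of the $p$-adic completion $\hat K$, and then to transfer decidability through Theorem~\ref{reldec}.

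First, $F := \hat K^\flat$ is isomorphic to $\widehat{\F_p((t))^{1/p^\infty}}$, and $\hat K$ is naturally an $\F_p[t]$-computable untilt of $F$; fix once and for all a primitive generator $\xi \in W(\Oo_F)$ of the ideal corresponding to $\hat K$, with Witt coordinates in $\F_p[t]$. Let $\mathcal F$ be a maximal immediate extension of $F$. By standard valuation-theoretic results, $\mathcal F$ is spherically complete, and perfectness is inherited from $F$; its residue field and value group are $\F_p$ and $\Q$. Thus $\mathcal F$ is a \emph{tame} valued field in the sense of Kuhlmann. Theorem~1.5 of \cite{Kuhl} now yields that $\mathcal F$ is decidable relative to its residue field and value group; since both $\F_p$ and $\Q$ are decidable, and the constant $t$ can be absorbed as a generic transcendental uniformizer over $\F_p$, we conclude that $\mathcal F$ is decidable in $L_{\text{val}}(\F_p[t])$.

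Next, view $\xi$ inside $W(\Oo_{\mathcal F})$ via the inclusion $F \hookrightarrow \mathcal F$ and let $\mathcal K$ be the untilt of $\mathcal F$ determined by the ideal $(\xi)$. By the Fargues--Fontaine correspondence (Theorem~\ref{Font}), one obtains $\hat K \hookrightarrow \mathcal K$, hence $K \hookrightarrow \mathcal K$, and the extension $\mathcal K/K$ is immediate since untilting preserves residue fields and value groups. Since the tilting correspondence preserves spherical completeness for perfectoid fields, $\mathcal K$ is maximally valued, and therefore a maximal immediate extension of $K$. Finally, $\xi$ is $\F_p[t]$-computable inside $W(\Oo_{\mathcal F})$, so applying Theorem~\ref{reldec} with $R_0 = \F_p[t]$ yields that $\mathcal K$ is decidable in $L_{\text{val}}$, as desired.

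The main obstacle is the preservation of spherical completeness under the tilt-untilt correspondence: one must show that $\mathcal K$ is spherically complete given that $\mathcal F$ is. This should reduce, via the Witt-vector description of $\Oo_{\mathcal K}$ and the identification of value groups $|\mathcal K^\times| = |\mathcal F^\times|$ under tilting, to matching pseudo-Cauchy sequences and their limits on the two sides. A secondary, mostly bookkeeping, obstacle is the strengthening of Kuhlmann's theorem from $L_{\text{val}}$ to $L_{\text{val}}(\F_p[t])$; this amounts to a uniformity statement for the AKE principle for tame fields in a chosen transcendental uniformizer, and can be handled by working relative to the subfield $\F_p((t)) \subseteq \mathcal F$.
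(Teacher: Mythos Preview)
Your overall strategy matches the paper's: pass to a maximal immediate extension on the tilt side, untilt it via the same distinguished element, and apply Theorem~\ref{reldec}. But you have the two obstacles weighted exactly backwards.

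The preservation of maximality under tilting, which you call the ``main obstacle,'' is the easy part: the paper proves it directly as Lemma~\ref{tiltmaximality}, a short argument using only that tilting preserves residue data and that $\Oo_K/(p)\hookrightarrow\Oo_L/(p)$ is proper whenever $K\subsetneq L$ is immediate between perfectoids.

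The step you dismiss as ``mostly bookkeeping''---upgrading decidability of $\mathcal F$ from $L_{\text{val}}$ to $L_t$---is the genuine input. Kuhlmann's AKE for tame fields only yields $L_{\text{val}}$-decidability; adding the constant $t$ is \emph{not} absorbed by ``working relative to $\F_p((t))$,'' and pinning down $\tp(t)$ is exactly the content of Lisinski's theorem (Theorem~\ref{listhm}), which combines Kuhlmann's work with Kedlaya's and is cited as an essential ingredient. Accordingly the paper does not take an arbitrary maximal immediate extension $\mathcal F$ but the \emph{specific} Hahn field $\F_p((t^{\Gamma}))$ with $\Gamma=\frac{1}{p^{\infty}}\Z$ (not $\Q$, by the way---an immediate extension preserves the value group), for which Lisinski's result applies directly.

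So the repair is: take $\mathcal F=\F_p((t^{\Gamma}))$, invoke Theorem~\ref{listhm} for its $L_t$-decidability, invoke Lemma~\ref{tiltmaximality} for maximality of the untilt, and conclude by Theorem~\ref{reldec2}.
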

Note that the fields $\Q_p(p^{1/p^{\infty}})$ and $\Q_p(\zeta_{p^{\infty}})$ are \textit{not} Kaplansky and have many non-isomorphic maximal immediate extensions, all of which are \textit{tame} in the sense of Kuhlmann \cite{Kuhl} (see Example \ref{tamexample}). Although the work of Kuhlmann yields several decidability results for equal characteristic tame fields, Corollary \ref{mainTame} is, to my knowledge, the first decidability result for tame fields of mixed characteristic (see \S \ref{tamesec}). The proof uses a recent decidability result of Lisinski \cite{Lis} for the Hahn field $\F_p(\!(t^{\Gamma})\!)$ with $\Gamma=\frac{1}{p^{\infty}}\Z$ in the language $L_{\text{val}}(t)$, strengthening Kuhlmann's earlier result for $\F_p(\!(t^{\Gamma})\!)$ in the language $L_{\text{val}}$ (see Theorem 1.6 \cite{Kuhl}). Corollary \ref{mainTame} then follows from Theorem \ref{reldec} and basic properties of the tilting equivalence (see \S \ref{tiltingequivsec}). It is worth remarking that most maximal immediate extensions of $\Q_p(p^{1/p^{\infty}})$ (resp. $\Q_p(\zeta_{p^{\infty}})$) are \textit{undecidable} (see Remark \ref{mostmaxundec}).

Without making essential use of the perfectoid machinery but only the philosophy thereof, we show in \S \ref{cong} the following:
\begin{Theore} \label{cong1}
Let $K$ be any of the valued fields $\Q_p(p^{1/p^{\infty}}),\Q_p(\zeta_{p^{\infty}})$ or $\Q_p^{ab}$. The existential theory of $\Oo_K/p \Oo_K$ in the language of rings $L_{\text{rings}}$ is decidable.
\end{Theore}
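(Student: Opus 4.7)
The plan is to use the philosophy of perfectoid tilting to identify $\Oo_K/p\Oo_K$ with an explicit characteristic-$p$ ring, and then to establish existential decidability of the resulting ring by a direct limit argument.

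For the identification, I build in each case a compatible system of $p$-power roots in $\Oo_K$ and reduce modulo $p$. For $K=\Q_p(p^{1/p^{\infty}})$, the sequence $\pi_n:=p^{1/p^n}$ satisfies $\pi_{n+1}^p=\pi_n$ and $\pi_0=p$; using $\Oo_K=\bigcup_n\Z_p[\pi_n]$ together with the Eisenstein presentation $\Z_p[\pi_n]/p\cong\F_p[x]/(x^{p^n})$, the assignment $t^{1/p^n}\mapsto \pi_n\bmod p$ extends to a ring isomorphism $\F_p[t^{1/p^{\infty}}]/(t)\xrightarrow{\sim}\Oo_K/p\Oo_K$. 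For $K=\Q_p(\zeta_{p^{\infty}})$, the sequence $\pi_n:=\zeta_{p^n}-1$ (for $n\geq 1$) satisfies $\pi_{n+1}^p\equiv\pi_n\bmod p$ and $\pi_1^{p-1}=pu$ for some unit $u$, yielding analogously $\Oo_K/p\Oo_K\cong\F_p[t^{1/p^{\infty}}]/(t^{p-1})$. For $K=\Q_p^{ab}=\Q_p^{ur}\cdot\Q_p(\zeta_{p^{\infty}})$, incorporating the unramified part gives $\Oo_K/p\Oo_K\cong\overline{\F}_p[t^{1/p^{\infty}}]/(t^{p-1})$.

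It thus suffices to show existential decidability (in $L_{\text{r}}$) of rings of the form $R:=k[t^{1/p^{\infty}}]/(t^N)$ with $k\in\{\F_p,\overline{\F}_p\}$ and $N\in\{1,p-1\}$. Writing $R=\bigcup_n R_n$ with $R_n:=k[t^{1/p^n}]/(t^N)\cong k[s]/(s^{Np^n})$, each $R_n$ is a finitely presented $k$-algebra whose full first-order theory is uniformly decidable (finite in the $k=\F_p$ case, interpretable in the decidable field $\overline{\F}_p$ otherwise). Since existential formulas are preserved along the inclusions $R_n\hookrightarrow R$, one has $R\models\phi$ iff $R_n\models\phi$ for some $n$, which already makes $\mathrm{Th}_{\exists}(R)$ recursively enumerable.

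The main obstacle is to produce an effective upper bound $N_0(\phi)$ for the level $n$, upgrading the r.e.\ procedure to a full decision procedure. The approach is to exploit the surjective Frobenius endomorphism of $R$ (whose kernel is the ideal generated by $t^{N/p}$) together with the combinatorics of the value monoid $\Z[1/p]\cap[0,N)$ (under addition truncated at $N$), in order to descend any witness $\bar a$ of $\phi$ to a witness lying in a controlled level $R_{N_0(\phi)}$, with $N_0(\phi)$ depending only on the number of variables and the total degree of the polynomials appearing in $\phi$. Once this bound is established, decidability follows by checking $\phi$ in the uniformly decidable finite-level ring $R_{N_0(\phi)}$.
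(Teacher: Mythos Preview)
Your identification of $\Oo_K/p\Oo_K$ with $k[t^{1/p^{\infty}}]/(t^N)$ (for the appropriate $k$ and $N$) is correct and matches the paper's Lemma~\ref{resringab}. The gap is in the second half: you reduce everything to producing an effective level bound $N_0(\phi)$ with the property that $R\models\phi$ implies $R_{N_0(\phi)}\models\phi$, you correctly flag this as ``the main obstacle'', and then you do not actually establish it. The Frobenius idea you gesture at does not work as stated: since $f_i,g_j\in\F_p[\bar x]$, applying Frobenius to a witness $\bar a$ gives $f_i(\bar a^p)=f_i(\bar a)^p=0$, but $g_j(\bar a^p)=g_j(\bar a)^p$, and this vanishes whenever $g_j(\bar a)\in (t^{N/p})$. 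So a witness whose inequation values lie in $[N/p,N)$ is destroyed by Frobenius rather than descended; nothing you have written explains how to avoid this. Without the bound you only have recursive enumerability, not decidability.

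The paper takes a genuinely different route that sidesteps the level-bound issue entirely. Rather than staying inside the tower $(R_n)$, it proves (Proposition~\ref{localglobal} and Remark~\ref{modif}) the equivalence
\[
k[t^{1/p^{\infty}}]/(t^N)\models \exists \bar x\,\Bigl(\bigwedge_i f_i(\bar x)=0\ \wedge\ \bigwedge_j g_j(\bar x)\neq 0\Bigr)
\iff
k[[t]]^{1/p^{\infty}}\models \exists \bar x\,\bigwedge_{i,j} v(f_i(\bar x))>v(g_j(\bar x)).
\]
The nontrivial direction uses the scaling embeddings $\rho_q:t\mapsto t^q$ of $k((t))^{1/p^{\infty}}$ for $q\in\tfrac{1}{p^{\infty}}\Z^{>0}$: given a witness $b$ with $\max_j v(g_j(b))<\min_i v(f_i(b))$, one chooses $q$ so that $N$ lands in the scaled gap, and $\rho_q(b)$ is then a witness modulo $t^N$. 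This works precisely because the $f_i,g_j$ have $\F_p$-coefficients, so $\rho_q(f_i(b))=f_i(\rho_q(b))$. The right-hand side is an existential $L_{\text{val}}$-statement with \emph{no reference to $t$}, so one can invoke the Anscombe--Fehm result (Theorem~\ref{AnsFehm}) that $\mathrm{Th}_{\exists}(k[[t]]^{1/p^{\infty}},v_t)$ is decidable in $L_{\text{val}}$ once $\mathrm{Th}_{\exists}(k)$ is. The elimination of $t$ is the whole point, and it replaces the level bound you were seeking.
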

The proof is again via reduction to characteristic $p$, using a recent existential decidability result for equal characteristic henselian valued fields in the language $L_{\text{val}}$, due to Anscombe-Fehm \cite{AnscombeFehm}. However, as Corollary \ref{mainsCor} suggests, if we aim to understand the theories of $\Q_p(p^{1/p^{\infty}}),\Q_p(\zeta_{p^{\infty}})$ and $\Q_p^{ab}$ via reduction to positive characteristic, we will need stronger results in the language $L_{\text{val}}(t)$ on the characteristic $p$ side. This is also supported by Proposition \ref{from0top} which shows that the Diophantine problem for $\F_p(\!(t)\!)^{1/p^{\infty}}$ in $L_{\text{val}}(t)$ is Turing reducible to the $\forall^1 \exists$-theory of $\Q_p(p^{1/p^{\infty}})$. 

An application of a different flavor, which yields an \textit{undecidability} result in mixed characteristic via reduction to characteristic $p$, was recently found in \cite{KK3}. Theorem A \cite{KK3} shows that the asymptotic theory of $\{ K: [K:\Q_p]<\infty\}$ in the language $L_{\text{val}}$ with a cross-section is undecidable. 
\section*{Notation}

\begin{itemize}
\item If $(K,v)$ is a valued field, we denote by $\mathcal{O}_v$ the valuation ring. If the valuation is clear from the context, we shall also write $\mathcal{O}_K$. We write $\Gamma_v$ for the value group and $k_v$ for the residue field. If the valuation $v$ is clear from the context, we also denote them by $\Gamma$ and $k$ respectively. 
\item When $(K,v)$ is of mixed characteristic, we write $p$ for the number $\text{char}(k)$. The notation $p^n \Oo_v$ stands for the ideal of $\Oo_v$ generated by the element $p^n$. If both the field in question and the valuation $v$ are clear from the context, we shall write $(p^n)$ for $p^n \Oo_v$.  
\item $\Z_p^{ur}$: The valuation ring of $(\Q_p^{ur},v_p)$, the maximal unramified extension of $\Q_p$ equipped with the unique extension of the $p$-adic valuation.\\
$\Z_p^{ab}$: The valuation ring of $(\Q_p^{ab},v_p)$, the maximal Galois extension of $\Q_p$ whose Galois group over  $\Q_p$ is abelian.\\
%$k(\!(t^{\Gamma})\!)$: The Hahn field of formal Laurent series with coefficients from the field $k$ and support a well-ordered subset of the ordered abelian group $G$.\\
%$k[[G]]$: The valuation ring of $k((G))$ with respect to the $t$-adic valuation.\\
We write $\F_p(\!(t)\!)^{1/p^{\infty}}$ (resp. $\overline{ \F}_p(\!(t)\!)^{1/p^{\infty}}$) for the perfect hull of $\F_p(\!(t)\!)$ (resp. $\overline {\F}_p(\!(t)\!)$).
%$\F_p(\!(t)\!)^{1/p^{\infty}}$: Another notation for $\F_p((t^{\frac{1}{p^{\infty}}\Z}))$, the Hahn field with residue field $\F_p$ and value group $\frac{1}{p^{\infty}}\Z$.\\
\item For a given language $L$, we denote by $\text{Sent}_L$ the set of $L$-sentences and by $\text{Form}_L$ the set of $L$-formulas. If $M$ is an $L$-structure and $A\subseteq M$ is an arbitrary subset, we write $L(A)$ for the language $L$ enriched with a constant symbol $c_a$ for each element $a \in A$. The $L$-structure $M$ can be updated into an $L(A)$-structure in the obvious way.

\item We write $L_{\text{rings}}=\{0,1,+,\cdot\}$ for the language of rings, $L_{\text{oag}}=\{0,+,<\}$ for the language of ordered abelian groups, $L_{\text{val}}=L_{\text{rings}}\cup \{\Oo\}$ for the language of valued fields (where $\Oo$ is a unary predicate for the valuation ring), %, construed as a three-sorted language with sorts for the valued field, the value group, the residue field and function symbols for the valuation and residue maps. \\
$L_{\text{val}}(t)=L_{\text{val}}\cup \{t\}$, where $t$ is a constant symbol whose intended interpretation will always be clear from the context. A local ring $(R,\mathfrak{m}_R)$ may be viewed as an $L_{\text{lcr}}$-structure, where $L_{\text{lcr}}$ is the \textit{language of local rings} consisting of the language of rings $L_{\text{rings}}$ together with a unary predicate $\mathfrak{m}$, whose intended interpretation is the maximal ideal $\mathfrak{m}_R\subseteq R$. 
\end{itemize}
\section{Preliminaries}

\subsection{Decidability}
\subsubsection{Introduction} Fix a \textit{countable} language $L$ and let $\text{Sent}_L$ be the set of well-formed $L$-sentences, identified with $\N$ via some G\"odel numbering. %Fix some G\"odel numbering $\text{Sent}_L \simeq \N: \phi \mapsto \ulcorner \phi \urcorner$. 
Let $M$ be an $L$-structure. Recall that $M$ is decidable if we have an algorithm to decide whether $M\models \phi$, for any given $\phi \in \text{Sent}_L$. More formally, let $\chi_{M}:\text{Sent}_L \to \{0,1\}$ be the characteristic function of $\text{Th}(M)\subseteq \text{Sent}_L$. %partial function defined by $(1)$ $\chi_M(n)=1$ if $n=\ulcorner \phi \urcorner$ and $M\models \phi$ $(2)$ $\chi_M(n)=0$ if $n=\ulcorner \phi \urcorner$ and $M\models \lnot \phi$ and $(3)$ undefined if $n$ is not a G\"odel number. 
We say that $M$ is \textit{decidable} if $\chi_M$ is recursive.
\subsubsection{Uniform decidability}
\begin{definition} \label{uniformdef}
For each $n\in \N$, let $f_n:\N\to \N$ be a function. The sequence $(f_n)_{n\in \omega}$ is uniformly recursive if the function $\N\times \N\to \N: (n,m)\mapsto f_n(m)$ is recursive.
\end{definition}
This concept is best illustrated with a \textit{non-example}:
\begin{example}
Let $A\subseteq \N$ be non-recursive. For each $n\in \N$, let $\delta_n:\N \to \N: m\mapsto \delta_{nm}$, where $\delta_{nm}$ is the Kronecker symbol. For each $n\in \N$, define $f_n:\N\to \N$ to be $(1)$ $\delta_n$ if $n\in A$ and $(2)$ identically $0$ if $n\notin A$. One readily verifies that each individual $f_n$ is recursive. On the other hand, the sequence $(f_n)_{n\in \omega}$ is not uniformly recursive. Indeed, otherwise we could solve the membership problem for $A$, using that $n\in A\iff f_n(n)=1$.
\end{example}
Using some G\"odel numbering, we can also state a version of Definition \ref{uniformdef} for sequences of functions $(f_n)_{n\in \omega}$, where $\dom(f_n)=\text{Sent}_L$ (or $\text{cdm}(f_n)=\text{Sent}_L$). One can then define a notion of \textit{uniform} decidability for sequences of $L$-structures:
\begin{definition}
A sequence $(M_n)_{n\in \omega}$ of $L$-structures is \textit{uniformly decidable} if the sequence of functions $(\chi_{M_n})_{n\in \omega}$ is uniformly recursive, i.e. if the function $\chi:\N\times \text{Sent}_L \to \{0,1\}:(n,\phi)\mapsto \chi_{M_n}(\phi)$ is recursive.
\end{definition}
\begin{rem}
If the sequence $\chi:\N\times \text{Sent}_L \to \{0,1\}:(n,\phi)\mapsto \chi_{M_n}(\phi)$ is recursive when restricted to \textit{existential} sentences, we naturally say that the sequence $(M_n)_{n\in \omega}$ is \textit{uniformly existentially decidable}. Other syntactic variants may be defined analogously. 
\end{rem}
\subsection{Interpretability} \label{intersec}
Our formalism follows closely \S 5.3 \cite{Hod}, where details and proofs may be found.
\subsubsection{Interpretations}
Given a language $L$, an \textit{unnested} atomic $L$-formula is one of the form $x=y$ or $x=c$ or $F(\overline{x})=y$ or $R(\overline{x})$, where $x,y$ are variables, $c$ is a constant symbol, $\overline{x}$ is a tuple of variables, $F$ is a function symbol and $R$ is a relation symbol of the language $L$.
\begin{definition} \label{interdef}
An $n$-dimensional interpretation of an $L$-structure $M$ in the $L'$-structure $N$ is a triple $\Gamma= (\partial_{\Gamma},\phi\mapsto \phi_{\Gamma}, f_{\Gamma})$ consisting of:
\begin{enumerate}
\item An $L'$-formula $\partial_{\Gamma}(x_1,...,x_n)$.
\item A map $\phi\mapsto \phi_{\Gamma}$, that takes an unnested atomic $L$-formula $\phi(x_1,...,x_m)$ and sends it to an $L'$-formula $\phi_{\Gamma}(\overline{y}_1,...,\overline{y}_m)$, where each $\overline{y}_i$ is an $n$-tuple of variables.

\item A surjective map $f_{\Gamma}:\partial_{\Gamma}(N^n)\twoheadrightarrow M$.
\end{enumerate}
such that for all unnested atomic $L$-formulas $\phi(x_1,...,x_m)$ and all $\overline{a}_i\in \partial_{\Gamma}(N^n)$, we have 
$$M\models \phi(f_{\Gamma} (\overline{a}_1), ...,f_{\Gamma} (\overline{a}_m)) \iff N \models  \phi_{\Gamma}(\overline{a}_1,...,\overline{a}_m)  $$
\end{definition}
An \textit{interpretation} of an $L$-structure $M$ in the $L'$-structure $N$ is an $n$-dimensional interpretation $\Gamma$, for some $n\in \N$. In that case, we also say  that $M$ is \textit{interpretable} in $N$. The formulas $\partial_{\Gamma}$ and $\phi_{\Gamma}$ (for all unnested atomic $\phi$) are the \textit{defining formulas} of $\Gamma$. 

Interpretability is a \textit{transitive} relation on the class of structures, i.e. if the $L$-structure $M$ is interpretable in the $L'$-structure $N$ and $N$ is interpretable in the $L''$-structure $P$, then there exists a \textit{composite} interpretation of $M$ in $P$ (Exercise 2, pg. 218 \cite{Hod}).

If $N$ is an $L'$-structure and $\overline{a}=(a_1,...,a_m)\in N^m$, we write $(N,\overline{a})$ for the expansion of $N$ in the language $L(\overline{c})$, which is $L$ together with an $m$-tuple of constant symbols $(c_1,...,c_m)$ with $c_i^{(N,\overline{a})}=a_i$. If $M$ is interpretable in $(N,\overline{a})$, for some $\overline{a} \in N^m$, we say that $M$ is interpretable in $N$ \textit{with parameters}. 
%\begin{Convention}
%Throughout the text, interpretations will always be without parameters
%\end{Convention}

\bp [Reduction Theorem 5.3.2 \cite{Hod}]\label{prophod}
Let $\Gamma$ be an $n$-dimensional interpretation of an $L$-structure $M$ in the $L'$-structure $N$. There exists a map $\phi\mapsto \phi_{\Gamma}$, extending the map of Definition \ref{interdef}$(2)$, such that for every $L$-formula $\phi(x_1,...,x_m)$ and all $\overline{a}_i\in \partial_{\Gamma}(N^n)$, we have that
$$M\models \phi(f_{\Gamma} (\overline{a}_1), ...,f_{\Gamma} (\overline{a}_m)) \iff N \models  \phi_{\Gamma}(\overline{a}_1,...,\overline{a}_m)  $$
\ep 
\begin{proof}
We describe how $\phi \mapsto \phi_{\Gamma}$ is built, for completeness (omitting details). By Corollary 2.6.2 \cite{Hod}, every $L$-formula is equivalent to one in which all atomic subformulas are unnested. One can then construct $\phi \mapsto \phi_{\Gamma}$ by induction on the complexity of formulas. The base case is handled by Definition \ref{interdef}$(2)$. This definition extends inductively according to the following rules:
\begin{enumerate}
\item $(\lnot \phi)_{\Gamma}=\lnot(\phi)_{\Gamma}$.
\item $(\bigwedge_{i=1}^n \phi_i)_{\Gamma}=\bigwedge (\phi_i)_{\Gamma}$.
\item $(\forall \phi)_{\Gamma}= \forall x_1,...,x_n (\partial_{\Gamma} (x_1,...,x_n) \rightarrow \phi_{\Gamma})$
\item $(\exists \phi)_{\Gamma}= \exists x_1,...,x_n (\partial_{\Gamma} (x_1,...,x_n) \land \phi_{\Gamma})$
\end{enumerate}
The resulting map satisfies the desired conditions of the Proposition.
\end{proof}
\begin{definition}
The map $\text{Form}_L\to \text{Form}_{L'}:\phi\mapsto \phi_{\Gamma}$ constructed in the proof of Proposition \ref{prophod} is called the \textit{reduction} map of the interpretation $\Gamma$.
\end{definition}

\subsubsection{Complexity of interpretations}
The complexity of the defining formulas of an interpretation defines a measure of complexity of the interpretation itself:

\begin{definition} [\S 5.4$(a)$ \cite{Hod}]
An interpretation $\Gamma$ of an $L$-structure $M$ in an $L'$-structure $N$ is quantifier-free if the defining formulas of $\Gamma$ are quantifier-free. Other syntactic variants are defined analogously (e.g., existential interpretation).
\end{definition} 
\begin{rem} \label{existinterrem}
$(a)$ The reduction map of a positive existential interpretation sends positive existential formulas to positive existential formulas.\\
$(b)$ The reduction map of an existential interpretation sends \textit{positive existential} formulas to existential formulas but does \textit{not} necessarily send existential formulas to existential formulas. 
\end{rem}

\bl \label{transcomplexity}
If the $L$-structure $M$ is $\exists^+$-interpretable in the $L'$-structure $N$ and $N$ is $\exists^+$-interpretable in the $L''$-structure $P$, then the composite interpretation of $M$ in $P$ is also an $\exists^+$-interpretation.
%$(b)$ If the $L$-structure $M$ is $\exists^+$-interpretable in the $L'$-structure $N$ and $N$ is $\exists$-interpretable in the $L''$-structure $P$, then the composite interpretation of $M$ in $P$ is an $\exists$-interpretation..
\el 
\begin{proof}
Clear.
\end{proof}

\subsubsection{Recursive interpretations}

\begin{definition} [Remark 4, pg. 215 \cite{Hod} ]
Suppose $L$ is a recursive language. Let $\Gamma$ be an interpretation of an $L$-structure $M$ in the $L'$-structure $N$. We say that the interpretation $\Gamma$ is \textit{recursive} if the the map $\phi \mapsto \phi_{\Gamma}$ on unnested atomic formulas is recursive.
\end{definition}

\begin{rem} [Remark 4, pg. 215 \cite{Hod}]
If $\Gamma$ is a recursive interpretation of an $L$-structure $M$ in the $L'$-structure $N$, then the reduction map of $\Gamma$ is also recursive.
\end{rem}
\subsubsection{Uniformly recursive interpretations} \label{uniformrec}

\begin{definition}
Suppose $L$ and $L'$ are languages. Let $(M_n)_{n\in \omega}$ be a sequence of $L$-structures and $(N_n)_{n\in \omega}$ be a sequence of $L'$-structure. For each $n\in \N$, let $\Gamma_n$ be an interpretation of $M_n$ in $N_n$. We say that the sequence of interpretations $(\Gamma_n)_{n\in \omega}$ is \textit{uniformly recursive} if the sequence of reduction maps $(\phi \mapsto \phi_{\Gamma_n})_{n\in \omega}$ on unnested atomic formulas is uniformly recursive, i.e. if the map $(n,\phi)\mapsto \phi_{\Gamma_n}$ is recursive.
\end{definition}
If an $L$-structure $M$ is interpretable in the $L'$-structure $N$ and the latter is decidable, then so is the former. It is not hard to prove the following uniform version:
\bp \label{uniformdecprop}
Suppose $L$ is a recursive language, $(M_n)_{n\in \omega}$ a sequence of $L$-structures and $N$ is an $L'$-structure. Suppose $N$ is decidable, $\Gamma_n$ is an interpretation of $M_n$ in $N$ and the sequence of interpretations $(\Gamma_n)_{n\in \omega}$ is uniformly recursive. Then the sequence $(M_n)_{n\in \omega}$ is  uniformly decidable.
%$(b)$ Suppose $N$ is $\exists$-decidable, $\Gamma_n$ is an existential interpretation of $M_n$ in $N$ and the sequence of interpretations $(\Gamma_n)_{n\in \omega}$ is uniformly recursive. Then the sequence $(M_n)_{n\in \omega}$ is uniformly existentially decidable.
\ep 
\begin{proof}
Rephrasing Proposition \ref{prophod} for sentences, yields $\chi_{M_n} (\phi)=\chi_N(\phi_{\Gamma_n})$ for every $\phi \in \text{Sent}_L$. It follows that the map $(n,\phi)\mapsto  \chi_{M_n} (\phi)$ is equal to the map $(n,\phi)\mapsto \phi_{\Gamma_n}\mapsto \chi_N(\phi_{\Gamma_n})$ and the latter is recursive as a composition of recursive functions.
%$(b)$ Similar.
\end{proof}
If the interpretation of $M$ in $N$ is recursive and so is the interpretation of $N$ in $P$, then the composite interpretation of $M$ in $P$ is recursive as well. Indeed, recursive functions are closed under composition. One also has a uniform version: 
\bl \label{transitivity}
Let $(M_n)_{n\in \omega}$ be a sequence of $L$-structures, $(N_n)_{n\in \omega}$ be a sequence of $L'$-structures and $(P_n)_{n\in \omega}$ be a sequence of $L''$-structures. For each $n\in \N$, let $\Gamma_n$ be an interpretation of $M_n$ in $N_n$ and $\Delta_n$ be an interpretation of $N_n$ in $P_n$ and suppose that the sequences of interpretations $(\Gamma_n)_{n\in \omega}$ and $(\Delta_n)_{n\in \omega}$ are uniformly recursive. Let $E_n$ be the composite interpretation of $M_n$ in $P_n$. Then the sequence of interpretations $(P_n)_{n\in \omega}$ is uniformly recursive.
\el
\begin{proof}
Clear.
\end{proof}

\section{Ax-Kochen/Ershov in mixed characteristic} \label{sec1}

\subsection{A result by van den Dries }
\subsubsection{Introduction}
We start with an Ax-Kochen/Ershov style result due to van den Dries (unpublished), which is briefly discussed on pg. 144 in \cite{vdd}. We shall sketch the proof (due to van den Dries), which does not seem to appear anywhere in the published literature. Some references, which use a similar \textit{coarsening} argument, include pg. 2 \cite{vddwitt}, the proof of Corollary 12.3 \cite{AJ} and the proof of Proposition 9.6 \cite{Scanlon}. For background material on coarsenings of valuations, see \S 7.4 \cite{vdd}.
\subsubsection{Inverse systems}
The formalism of multi-sorted structures, used in this section, is spelled out in \S 3 \cite{Scanlon}. The proof of van den Dries' Theorem \ref{thO/p^nO} requires a technical lemma for inverse systems, which we now discuss.

Let $\mathcal{R}=(R_n)_{n\in \omega}$ be a sequence of rings, viewed as a multi-sorted structure with sorts $(\mathbf{R}_n)_{n\in \omega}$, each equipped with the language of rings $L_{\text{rings}}$, and for each $n\in \N$ we have a map $f_n:\mathbf{R}_{n+1}\to \mathbf{R}_n$. Let $T$ be the theory that requires of $\mathcal{R}=(R_n)_{n\in \omega}$ that the map $f_n: R_{n+1}\to R_n$ be a surjective ring homomorphism with $\text{Ker}(f_n)=p^n R_{n+1}$ %An $L$-structure $\mathcal{R}$ is simply a sequence of rings $\mathcal{R}=(R_n)_{n\in \omega}$ with $f_n:R_{n+1}\to R_n$. 
, i.e. $R_n\cong R_{n+1}/p^nR_{n+1}$. If $\mathcal{R}=(R_n)_{n\in \omega}$ and $\mathcal{S}=(S_n)_{n\in \omega}$ are two models of $T$, they are isomorphic precisely when there is a \textit{compatible} system $(\phi_n)_{n\in \omega}$ of isomorphisms $\phi_n:R_n\xrightarrow {\cong} S_n$, i.e. such that the diagram commutes

\[
\begin{tikzcd}
R_{n+1} \arrow[r, "\phi_{n+1}"]  \arrow["f_n",d] 
& S_{n+1} \arrow[d, "g_n"] \\
R_n \arrow[r, "\phi_n"]
& [blue] S_n
\end{tikzcd}
\]
for each $n\in \N$. Compatibility of the $\phi_n$'s is essential as there are examples where $R_n\cong S_n$ for each $n\in \N$ but $\mathcal{R}\not \cong \mathcal{S}$ (see e.g., Remark \ref{kedlayatemkin}). Somewhat surprisingly, compatibility comes for free in a saturated setting: 
\bl \label{invlemma}
Assume \text{CH}. Let $\mathcal{R}=(R_n)_{n\in \omega}$ and $\mathcal{S}=(S_n)_{n\in \omega}$ be two models of $T$. Suppose that for each $n\in \N$, we have that $R_n\cong S_n$ and the rings $R_n$, $S_n$ are saturated with $|R_n|=|S_n|\leq \aleph_1$. Then $\mathcal{R}\cong \mathcal{S}$.
\el
\begin{proof}
%"$\Rightarrow$": Clear.\\
%"$\Leftarrow$": %Let $\mathcal{R}=(R_n)_{n\in \omega}$ and $\mathcal{S}=(S_n)_{n\in \omega}$ be saturated of size $\aleph_1$ with $R_n\equiv S_n$ in $L_{\text{rings}}$ for each $n\in \N$. 
Let $U$ be a non-principal ultrafilter on $\mathcal{P}(\N)$ and consider the ultraproducts $R_{U}=\prod_{n\in \omega} R_n/U$ and $S_{U}=\prod_{n\in \omega} S_n/U$. \\
\textbf{Claim 1:} $R_{U}\cong S_{U}$.
\begin{proof}
Since $R_n\equiv S_n$ for each $n\in \N$, we get that $R_{U}\equiv S_{U} $ by \L o\'s' s Theorem. For each $n\in \N$, let $\mathcal{F}_m=\{n\in \N: |R_n|\geq m\}$. If there exists $m\in \N$ with $\mathcal{F}_m\notin U$, then $R_{U}$ and $S_U$ are both finite and thus $R_U\cong S_U$. If on the other hand $\mathcal{F}_m \in U$ for all $m\in \N$, then $R_U$ and $S_U$ are both infinite. Since $|R_n|\leq \aleph_1$, we get that $|R_U| \leq \aleph_1^{\aleph_0}=2^{\aleph_0^2}=2^{\aleph_0}=\aleph_1$, using the continuum hypothesis (similarly $|S_U|\leq \aleph_1$). Moreover, the ultraproducts $R_U$ and $S_U$ are $\aleph_1$-saturated (Exercise 4.5.37 \cite{Mark}) and thus saturated of size $\aleph_1$. Since $R_{U}$ and $S_{U}$ are elementary equivalent and both saturated of size $\aleph_1$, we conclude that $R_{U}\cong S_{U}$ (Theorem 4.3.20 \cite{Mark}). 
\qedhere $_{\textit{Claim 1}}$ \end{proof}
Next we prove:\\
\textbf{Claim 2:} For each $n\in \N$, we have $R_{U}/p^n R_{U}\cong R_n$.
\begin{proof}
Fix $n\in \N$. Since $R_m/p^nR_m\cong R_n$ for $m>n$, we get that $R_{U}/p^n R_{U}\equiv R_n$ in $L_{\text{rings}}$ by \L o\'s Theorem. If $R_n$ is finite, then $R_{U}/p^n R_{U}\cong R_n$ and we are done. Otherwise, we will have that $R_n$ is saturated of size $\aleph_1$. The same is true for $R_{U}/p^nR_{U}$, being interpretable in the structure $R_U$, which is saturated of size $\aleph_1$ by \text{CH} (see the proof of Claim 1). We conclude that $R_{U}/p^n R_{U}\cong R_n$ (Theorem 4.3.20 \cite{Mark}). 
\qedhere $_{\textit{Claim 2}}$ \end{proof}
Similarly, for each $n\in \N$, we have $S_{U}/p^n S_{U}\cong S_n$.
By Claim 1, we obtain $\phi: R_{U} \xrightarrow{\cong} S_{U}$. Note that $\phi (p^n R_U)=p^n S_U$, for each $n\in \N$. By Claim 2, this gives rise to a \textit{compatible} system $(\phi_n)_{n\in \omega}$ of isomorphisms $\phi_n: R_n \xrightarrow{\cong} S_n$, which yields $\mathcal{R}\xrightarrow{\cong} \mathcal{S}$.
\end{proof}
\subsubsection{Statement and proof}
The following result will be of fundamental importance for the rest of the paper:
\bt [van den Dries]{\label{thO/p^nO}} 
Let $(K,v),(K',v')$ be two henselian valued fields of mixed characteristic. Then $(K,v)\equiv (K',v')$ in $L_{\text{val}}$ if and only if $\mathcal{O}_v/p^n \Oo_v\equiv \Oo_{v'}/p^n\Oo_{v'}$ in $L_{\text{rings}}$ for all $n\in \N$ and $(\Gamma_v,vp)\equiv (\Gamma_{v'},v'p)$ in $L_{oag}$ together with a constant for $vp$.
\et 
\begin{proof}
"$\Rightarrow$": 
%uppose $K\models Th_{\exists}K'$. Let $n\in \N$ and $\exists x \phi(x) \in Th_{\exists}(\mathcal{O}_{v'}/p^n)$. Then
%$$\mathcal{O}_{v'}/p^n\models \phi \iff K'\models \phi'\Rightarrow K\models \phi' \Rightarrow \mathcal{O}_v/p^n\models \phi$$
Clear.\\
"$\Leftarrow$": As the statement at hand is \textit{absolute}, we may assume the continuum hypothesis (see \S 8 \cite{Scanlon} or pg. 122 \cite{vdd}). We may therefore assume that both $(K,v)$ and $(K',v')$ are saturated of size $\aleph_1$ (Corolllary 4.3.13 \cite{Mark}). By our assumption, we have an isomorphism of ordered abelian groups $ (\Gamma_v,vp)\cong (\Gamma_{v'},v'p)$ and a ring isomorphism $\Oo_{v'}/p^n\Oo_{v'} \cong \mathcal{O}_v/p^n\Oo_v$ for each $n\in \N$. We shall argue that $(K,v)\cong (K',v')$.

Consider the finest coarsening $w$ of $v$ for which the associated residue field $k_w$ has characteristic $0$. The corresponding valuation ring is $\mathcal{O}_w=\mathcal{O}_v[\frac{1}{p}]$ and the corresponding value group is $\Gamma_w=\Gamma_v/\text{Conv}(\Z vp)$, where $\text{Conv}(\Z vp)$ is the convex hull of $\Z vp$ in $\Gamma_v$. Let $\bar v$ be the induced valuation from $v$ on the residue field $k_w$. We then have that $\mathcal{O}_{\bar v}=\mathcal{O}_v/\bigcap_{n\in \omega} p^n\mathcal{O}_v$. We also consider the analogous objects for $K'$.\\ 
\textbf{Claim 1:} We have a ring isomorphism $\mathcal{O}_{\bar v} \cong \varprojlim \mathcal{O}_v/p^n\mathcal{O}_v $.
\begin{proof}
Consider the ring homomorphism $f:\mathcal{O}_{\bar v} \to \varprojlim \mathcal{O}_v/p^n\mathcal{O}_v :x+\bigcap_{n\in \omega} p^n\mathcal{O}_v\mapsto (x+p^n\Oo_v)_{n\in \omega}$, which is clearly injective. We shall argue that it is also surjective. For a given $(x_n+p^n \Oo_v)_{n\in \omega} \in \varprojlim \mathcal{O}_v/p^n\mathcal{O}_v$, we may find $x\in \Oo_v$ with $x\equiv x_n \mod p^n \Oo_v$, using that $\Oo_v$ is $\aleph_1$-saturated. It follows that $f(x)=(x_n+p^n \Oo_v)_{n\in \omega} $.
\qedhere $_{\textit{Claim 1}}$ \end{proof}
Similarly, one obtains an isomorphism $\mathcal{O}_{\overline{ v'}} \cong \varprojlim \mathcal{O}_{v'}/p^n\mathcal{O}_{v'} $.\\
\textbf{Claim 2:} We have an isomorphism of valued fields $\phi:(k_w,\overline{v})\cong (k_{w'},\overline{v'})$.
\begin{proof}
Let $\mathcal{R}=(\Oo_v/p^n \Oo_v)_{n\in \omega}$ and $\mathcal{S}=(\Oo_{v'}/p^n \Oo_{v'})_{n\in \omega}$. Then $\mathcal{R}\cong \mathcal{S}$ by Lemma \ref{invlemma}. This yields $\varprojlim \mathcal{O}_v/p^n\mathcal{O}_v \cong \varprojlim \mathcal{O}_{v'}/p^n\mathcal{O}_{v'}$ and thus $\mathcal{O}_{\bar v} \cong \mathcal{O}_{\overline {v'}}$ by Claim 1.
\qedhere $_{\textit{Claim 2}}$ \end{proof}

%Let $\phi:(k_w,\overline{v})\cong (k'_{w'},\overline{v'})$ be the isomorphism provided by Claim 2. 
We also have that $\Gamma_w\cong \Gamma_{w'}$, since the isomorphism $ (\Gamma_v,vp)\cong (\Gamma_{v'},v'p)$ descends to the quotients $\Gamma_{v}/\text{Conv}(\Z vp)\cong \Gamma_{v'}/\text{Conv}(\Z v'p)$. By Lemma 7.13 \cite{vdd}, the coarsened valued fields $(K,w)$ and $(K',w')$ are henselian too. By the Ax-Kochen/Ershov principle in pure characteristic $0$ (see e.g., Corollary 5.22 \cite{vdd}), we get that $(K,w)\equiv (K',w')$ in $L_{\text{val}}$.

Passing once again to elementary extensions, in a suitable language that includes unary predicates for both $\Oo_v$ and $\Oo_w$, we may even assume that $(K,w)\cong (K',w')$ to begin with. By stable embeddedness of residue fields for henselian valued fields of pure characteristic $0$ (see Corollary 5.25 \cite{vdd}), there is even an isomorphism $\Phi:(K,w)\xrightarrow{\cong} (K',w')$ inducing $\phi: (k_w, \overline{v})\xrightarrow{\cong} (k_{w'},\overline{v'})$. \\
\textbf{Claim 3:} The map $\Phi$ is an isomorphism of the valued fields $(K,v)$ and $(K',v')$.
\begin{proof}
Given $x\in K$, we need to show that $vx\geq 0 \iff v' (\Phi(x))\geq 0$. If $vx\geq 0$, then either $(i)$ $wx>0$ or $(ii)$ $wx=0$ and $\overline{v} \bar x\geq 0$, where $\bar x\in k_v$ is the image of $x$ via $\text{res}_w:\Oo_{w}\to k_w$. In the first case, we get that $w' (\Phi(x))>0$ as $\Phi:(K,w)\to (K',w')$ is a valued field homomorphism and therefore $v' (\Phi(x))>0$ as $\mathfrak{m}_{w'}\subset \mathfrak{m}_{v'}$. Suppose now that $wx=0$ and $\overline{v} \bar x \geq 0$. Then we also get that $w'(\Phi(x))=0$ and $\overline{v'} (\phi(\overline{x}))\geq 0$ as $\phi:(k_w,\overline{v})\to  (k_{w'},\overline{v'})$ is a valued field homomorphism. Since $\Phi$ induces $\phi$, we get that $\overline{v'}(\overline{\Phi(x)})=\overline{v'} (\phi(\overline{x}))\geq 0$ and conclude that $v' (\Phi(x))\geq 0$. 
\qedhere $_{\textit{Claim 3}}$ \end{proof}
Claim 3 finishes the proof.
\end{proof}

\subsection{Existential AKE in mixed characteristic  } 
In this section we prove an existential version of Theorem \ref{thO/p^nO}. We first review some known AKE results in the equal characteristic setting.
\subsubsection{Comparison with the equal characteristic case}
For equal characteristic henselian valued fields one has the following simple Ax-Kochen/Ershov principles due to Anscombe-Fehm:
\bt [Corollary 1.2 \cite{AnscombeFehm}] \label{ansfehm1}
Let $(K,v), (K',v')$ be two equal characteristic non-trivially valued henselian fields. Then $(K,v)\equiv_{\exists} (K',v')$ in $L_{\text{val}}$ if and only if $k\equiv_{\exists} k'$ in $L_{\text{rings}}$.
\et  
\bt  [Corollary 7.5 \cite{AnscombeFehm}] \label{AnsFehm}
Let $(K,v)$ be an equal characteristic henselian valued field. Then $Th_{\exists}(K,v)$ is decidable in $L_{\text{val}}$ if and only if $Th_{\exists}(k)$ is decidable in $L_{\text{rings}}$.
\et 
\begin{rem} \label{akevsansfehm}
In residue characteristic $0$, Theorems \ref{ansfehm1} and \ref{AnsFehm} were essentially known by work of Ax-Kochen/Ershov prior to the work of Anscombe-Fehm (see Remark 7.3 \cite{AnscombeFehm}). 
\end{rem}

\subsubsection{Existential AKE in mixed characteristic}
In mixed characteristic, one can easily construct counterexamples of Theorems \ref{ansfehm1} and \ref{AnsFehm} (see Remark 7.6 \cite{AnscombeFehm}). It is then natural to ask what an existential AKE principle in mixed characteristic would look like. This will be Theorem \ref{exthO/p^nO} below, whose proof follows closely the proof of Theorem \ref{thO/p^nO}.

We write $\mathfrak{m}_n$ for the maximal ideal of $\Oo_v/p^n \Oo_v$ and $(\Oo_v/p^n \Oo_v,\mathfrak{m}_n)$ for the local ring, viewed as an $L_{\text{lcr}}$-structure (see notation). 
\bt \label{exthO/p^nO}
Let $(K,v),(K',v')$ be two henselian valued fields of mixed characteristic. Then the following are equivalent: 
\begin{enumerate}
\item $(K,v)\equiv_{\exists} (K',v')$ in $L_{\text{val}}$.

\item $\mathcal{O}_v/p^n\Oo_v\equiv_{\exists} \Oo_{v'}/p^n \Oo_{v'}$ in $L_{\text{rings}}$ for all $n\in \N$.

\item $(\mathcal{O}_v/p^n \Oo_v, \mathfrak{m}_n) \equiv_{\exists^+} (\Oo_{v'}/p^n \Oo_{v'},\mathfrak{m}_n')$ in $L_{\text{lcr}}$ for all $n\in \N$.
\end{enumerate}
\et
\begin{proof} $(1)\Rightarrow (2),(3)$: Clear.\\
$(2)\Rightarrow (1)$: By symmetry, it will suffice to show that $(K,v)\models \text{Th}_{\exists}(K',v')$. We may further assume $(K',v')$ is countable by downward L\"owenheim-Skolem and that $(K,v)$ is $\aleph_1$-saturated.

We again consider the valuations $w,\overline{v}$ (resp. $w',\overline{v'}$) that were introduced in the proof of Theorem \ref{thO/p^nO}. By our assumption, we have %an embedding of ordered abelian groups $ (\Gamma_{v'},v'p)\hookrightarrow (\Gamma_v,vp)$ and 
for each $n\in \N$ an embedding of rings $\Oo_{v'}/p^n\Oo_{v'}\hookrightarrow \mathcal{O}_v/p^n\Oo_v$.\\ 
 %We would now like to find an embedding $\mathcal{O}_{\bar v'}\hookrightarrow \mathcal{O}_{\bar v}$; note that this is not immediate from $(*)$ since these embeddings may be incompatible.\\
\textbf{Claim:} There is an injective ring embedding $\phi: \mathcal{O}_{\bar v'}\hookrightarrow \mathcal{O}_{\bar v}$.
\begin{proof}
Let us fix an enumeration of $\mathcal{O}_{v'}$, say $\mathcal{O}_{v'}=(a_i)_{i\in \N}$. Consider the set of formulas in countably many variables $x=(x_n)_{n\in \omega}$ of the form
$$ \Sigma (x)=\{x_i\diamond x_j= x_k (p^n), x_m  \Box x_{\rho}(p^n): a_i\in \mathcal{O}_{v'}, a_i\diamond a_j=a_k (p^n), a_m\Box a_{\rho}(p^n)\}$$
where $\diamond$ is either $+$ or $\cdot$ and $\Box$ is either $=$ or $\neq$. Since for each $n\in \N$ we have an embedding of rings $\mathcal{O}_{v'}/p^n\mathcal{O}_{v'} \hookrightarrow \mathcal{O}_v/p^n\mathcal{O}_v$, we get that $\Sigma(x)$ is finitely satisfiable. The ring $\mathcal{O}_v$ is $\aleph_1$-saturated and we thus have $b=(b_n)_{n\in \omega}$ with $b\models \Sigma(x)$. Using that $\mathcal{O}_{\bar v}=\mathcal{O}_v/\bigcap_{n\in \omega} p^n\mathcal{O}_v$ (resp. $\mathcal{O}_{\overline{v'}}=\mathcal{O}_{v'}/\bigcap_{n\in \omega} p^n\mathcal{O}_{v'}$), one readily checks that the map $\mathcal{O}_{v'}\to\mathcal{O}_{v}:a_i\mapsto b_i $ descends to a ring embedding $\mathcal{O}_{\bar v'}\hookrightarrow \mathcal{O}_{\bar v}$.
\qedhere $_{\textit{Claim}}$ \end{proof}
The Claim provides us with a valued field embedding $\phi:(k_{w'},\bar v')\hookrightarrow (k_w,\bar v)$. %We also have an embedding of ordered abelian groups $\Gamma_w\hookrightarrow \Gamma_{w'}$, as the embedding $ (\Gamma_{v'},v'p)\hookrightarrow (\Gamma_v,vp)$ descends to to the quotients $\Gamma_{v'}/\text{Conv}(\Z v'p)\hookrightarrow \Gamma_v/\text{Conv}(\Z vp)$. 
By the existential Ax-Kochen/Ershov principle in pure characteristic $0$ (see Theorem \ref{ansfehm1} and Remark \ref{akevsansfehm}), we get that $(K,w)\models \text{Th}_{\exists}(K',w')$. Replacing $K$ with an $\aleph_1$-saturated extension in a suitable language that includes unary predicates for both $\Oo_v$ and $\Oo_w$, we will also have an embedding $(K',w')\hookrightarrow (K,w)$.

By the relative embedding property for equal characteristic $0$ henselian valued fields (see Theorem 7.1 in \cite{Kuhl} for a more general statement), we can even find $\Phi:(K',w')\hookrightarrow (K,w) $ that induces $\phi:(k_{w'},\bar v')\hookrightarrow (k_w,\bar v)$. Finally, we get that the map $\Phi: (K',v')\hookrightarrow (K,v)$ is an embedding of valued fields, as in the proof of Claim 3, Theorem \ref{thO/p^nO}.\\
$(3)\Rightarrow (2)$: For $f(x_1,...,x_m)\in \Z[x_1,...,x_m]$ and $(a_1,...,a_m) \in \Oo_v^m$, note that $f(a_1,...,a_m) \neq 0 \mod p^n \Oo_v$ if and only if there exists $y\in \mathfrak{m}_v$ such that $f(a_1 ,...,a_m) \cdot y=p^n \mod p^{n+1} \Oo_v$ (similarly for $\Oo_{v'}$). %In other words, we have that $\Oo_v/(p^n)\models \exists x_1,...,x_m ( f(x_1,...,x_m)\neq 0)\iff (\Oo_v/(p^{n+1}),\mathfrak{m}_{n+1})\models \exists x_1,...,x_m,y (f(x_1,...,x_m)\cdot y=p^n \land y\in \mathfrak{m})$. 
Consequently, for each $n\in \N$, we see that if $(\Oo_v/p^{n+1} \Oo_v, \mathfrak{m}_{n+1})\equiv_{\exists^+} (\Oo_{v'}/p^{n+1} \Oo_{v'}, \mathfrak{m}'_{n+1})$ in $L_{\text{lcr}}$, then $\Oo_v/p^n\Oo_v\equiv_{\exists} \Oo_{v'}/p^n \Oo_{v'}$ in $L_{\text{rings}}$. 
\end{proof}
\subsection{Decidability} \label{uniformitydecidab}
We now harvest the consequences of Theorem \ref{thO/p^nO} and \ref{exthO/p^nO} in relation to decidability. Since the countable union of recursive sets is not guaranteed to be recursive, we need to ask not only that each individual $\Oo_K/(p^n)$ be decidable in $L_{\text{rings}}$ but also that the sequence $(\Oo_K/(p^n))_{n\in \omega}$ be \textit{uniformly decidable} in $L_{\text{rings}}$:
%\begin{rem} [see Proposition \ref{almostdec}] \label{almostdecrem}
%We stress right away that there exist examples of henselian valued fields $(K,v)$ with $(\Gamma_v,vp)$ decidable in $L_{\text{oag}}$ and also $\Oo_K/(p^n)$ decidable in $L_{\text{rings}}$ for each $n\in \N$, but with $(K,v)$ undecidable in $L_{\text{val}}$. 
%\end{rem}
\bc \label{vddcor}
Let $(K,v)$ be a henselian valued field of mixed characteristic. Then the following are equivalent: 
\begin{enumerate}
\item The valued field $(K,v)$ is decidable in $L_{\text{val}}$.

\item The sequence $(\Oo_K/(p^n))_{n\in \omega}$ is uniformly decidable in $L_{\text{rings}}$ and $(\Gamma_v,vp)$ is decidable in $L_{\text{oag}}$ with a constant for $vp$.
\end{enumerate}

\ec
\begin{proof}
%Since the $L_{\text{val}}$-structure is interpretable in the $L$-structure $\mathcal{K}$, it suffices to show that the latter is decidable. We exhibit a recursive axiomatization. 
%Consider the $L$-structure $\mathcal{K}=(K,\Gamma, R_1,R_2,...)$ associated to $(K,v)$. Let $\Sigma$ be an axiomatization in the language $L$, that requires of an $L$-structure $\mathcal{K}'=(K',\Gamma', R_1',R_2',...)$ the following:
%\begin{enumerate}
%\item $v:K'\to \Gamma' \sqcup \{\infty\}$ is a valuation map that makes $(K',v')$ into a mixed characteristic henselian valued field and $\Oo=\{x\in K:vx\geq 0\}$.
%\item $\Gamma' \sqcup \{\infty\}$ is an ordered abelian group and $(\Gamma',v'p)\equiv (\Gamma_v,vp)$ in $L_{\text{oag}}$ with a constant for the value of $p$.
%\item For each $n\in \N$, the map $r_n: \Oo'\to R'_n$ is a surjective ring homomorphism with $\text{Ker}(r'_n)=p^n\cdot \Oo'$.
%\item We have that $R'_n \equiv \Oo_v/(p^n)$ in $L_{\text{rings}}$.
%\end{enumerate}
$(1)\Rightarrow (2)$: Clear.\\
$(2) \Rightarrow (1)$: The identification $\Gamma_v=K^{\times}/\Oo^{\times}$ furnish us with a recursive interpretation $E$ of $(\Gamma_v,vp)$ in the valued field $(K,v)$. Let also $E_n$ be the natural interpretation of $\Oo_K/(p^n)$ in the $L_{\text{val}}$-structure $(K,v)$, for each $n\in \N$. If $g_n:\text{Sent}_{L_{\text{rings}}}\to \text{Sent}_{L_{\text{val}}}$ denotes the reduction map of $E_n$, then one can see that the sequence $(g_n)_{n\in \omega}$ is uniformly recursive  (using that $E_n$ is uniform in $n\in \N$). Let 
$$\Sigma:=\text{Hen}_{(0,p)}\cup (\bigcup_{n\in \omega} \{\phi_{E_n}:\Oo_K/(p^n)\models \phi \}) \cup \{ \phi_{E}: (\Gamma_v,vp)\models \phi\}$$ 
where $\text{Hen}_{(0,p)}$ is a first-order axiom schema capturing Hensel's lemma (see e.g., pg.21 \cite{Kuhl}), together with a set of sentences capturing that the valued field has mixed characteristic $(0,p)$.\\
\textbf{Claim:} The axiomatization $\Sigma$ is r.e.
\begin{proof}
The set $\text{Hen}_{(0,p)}$ is clearly r.e. The set $\{ \phi_{E}: (\Gamma_v,vp)\models \phi\}\subseteq L_{\text{val}}$ is r.e., being the image of a recursive set via the recursive reduction map of $E$. Since recursively enumerable sets are closed under finite unions, it remains to show that $\bigcup_{n\in \omega} \{\phi_{E_n}:\Oo_K/(p^n)\models \phi \}$ is r.e. 

Let $\chi: \N\times \text{Sent}_{L_{\text{rings}}}\to \N$ be the recursive function associated to the uniformly decidable sequence $(\Oo_K/(p^n))_{n\in \omega}$. We construct the partial recursive function $\chi': \N\times \text{Sent}_{L_{\text{rings}}} \to \N\times \text{Sent}_{L_{\text{rings}}}$ which maps $(n,\phi)\mapsto (n,\phi)$ if $\chi(n,\phi)=1$ and is undefined if $\chi(n,\phi)=0$. Consider also the recursive function $g:\N\times \text{Sent}_{L_{\text{rings}}}\to \text{Sent}_{L_{\text{val}}}:(n,m)\mapsto g_n(m) $ associated to the uniformly recursive sequence $(g_n)_{n\in \omega}$. Observe that $\bigcup_{n\in \omega} \{\phi_{E_n}:\Oo_K/(p^n)\models \phi \}=\text{Im}(F)$ where $F$ is the (partial) recursive function $F= g \circ \chi'$. It follows that $\bigcup_{n\in \omega} \{\phi_{E_n}:\Oo_K/(p^n)\models \phi \}$ is r.e.
\qedhere $_{\textit{Claim}}$ \end{proof}
If $(K',v')\models \Sigma$, then $(K',v')\equiv (K,v)$ in $L_{\text{val}}$ by Theorem \ref{thO/p^nO}. We therefore get that $\Sigma$ is a complete axiomatization of $(K,v)$. %The Claim together with Craig's trick (pg. 269, Exercise 3 \cite{Hod}), 
We conclude that the $L_{\text{val}}$-theory of $(K,v)$ admits a r.e. and complete axiomatization, whence $(K,v)$ is decidable.
\end{proof}
\bc \label{vddcor2}
Let $(K,v)$ be a henselian valued field of mixed characteristic. Then the following are equivalent: 
\begin{enumerate}
\item The valued field $(K,v)$ is $\exists$-decidable in $L_{\text{val}}$.

\item  The sequence $(\Oo_K/(p^n))_{n\in \omega}$ is uniformly $\exists$-decidable in $L_{\text{rings}}$.

\item The sequence $((\Oo_K/(p^n),\mathfrak{m}_n)_{n\in \omega}$ is uniformly $\exists^+$-decidable in $L_{\text{lcr}}$.
\end{enumerate}
\ec
\begin{proof}
Similar to Corollary \ref{vddcor}, ultimately using Theorem \ref{exthO/p^nO}.
\end{proof}

%We also record here a consequence which will give us some flexibility:
%\bc
%Let $(K,v)$ be a mixed characteristic henselian valued field with $vp$ cofinal in $\Gamma_v$. Then $(K,v)\equiv (\widehat{K},\widehat{v})$.
%\ec

\section{Perfectoid fields} \label{localapprox}
\subsection{Introduction}
\subsubsection{Motivation}
%Our way of reducing (existential) decidability problems from characteristic $0$ to characteristic $p$ is very much in the spirit of the  . 
The theory of perfectoid fields (and spaces), introduced by Scholze in \cite{Scholze}, was initially designed as a means of transferring results available in positive characteristic to mixed characteristic (see \S 1 \cite{ScholzeICM}). It formalizes the earlier Krasner-Kazhdan-Deligne philosophy (due to \cite{Krasner}, \cite{Kazhdan} and \cite{Deligne}), of approximating a \textit{highly ramified} mixed characteristic field with a positive characteristic field. Within the framework of perfectoid fields, this kind of approximation becomes precise and robust with the use of the tilting functor (see \S \ref{perfsec}). 

All this is substantially different from the Ax-Kochen method (see e.g., 2.20 \cite{vdd}), which achieves a model-theoretic transfer principle \textit{asymptotically}, i.e. with the residue characteristic $p\to \infty$. The theory of perfectoid fields will allow us to transport decidability information for a fixed residue characteristic (but with high ramification), setting the stage for a different type of model-theoretic transfer principle.
\subsubsection{Definition}
\begin{definition}
A perfectoid field is a complete valued field $(K,v)$ of residue characteristic $p>0$ such that $\Gamma_v$ is a dense subgroup of $\Rr$ and the Frobenius map $\Phi : \mathcal{O}_K/(p) \to \mathcal{O}_K/(p):x\mapsto x^p$ is surjective.
\end{definition}
\begin{example}
$(a)$ The $p$-adic completions of $\Q_p(p^{1/p^{\infty}})$, $\Q_p(\zeta_{p^{\infty}})$ and $\Q_p^{ab}$ are mixed characteristic perfectoid fields.\\
$(b)$ The $t$-adic completions of $\F_p(\!(t)\!)^{1/p^{\infty}}$ and $\overline{\F}_p(\!(t)\!)^{1/p^{\infty}}$ are perfectoid fields of characteristic $p$.
\end{example}
\begin{rem}
In characteristic $p$, a perfectoid field is simply a perfect, complete non-archimedean valued field of rank $1$. 
\end{rem}

\subsection{Tilting} \label{perfsec}
\subsubsection{Introduction}
A construction, originally due to Fontaine, provides us with a tilting functor that takes any perfectoid field $K$ and transforms it into a perfectoid field $K^{\flat}$ of characteristic $p$. We shall now describe this tilting functor. For more details, see \S 3 \cite{Scholze}.
\subsubsection{Definition}
Given a perfectoid field $(K,v)$, we shall now define its tilt $(K^{\flat},v^{\flat})$. Let $\varprojlim_{x\mapsto x^p}K$ be the limit of the inverse system 
$$  ...\xrightarrow{x\mapsto x^p} K\xrightarrow{x\mapsto x^p} K \xrightarrow{x\mapsto x^p} K$$
which is identified as $\varprojlim_{x\mapsto x^p}K=\{ (x_n)_{n\in \omega}: x_{n+1}^p=x_n\}$, viewed as a multiplicative monoid via $(x_n)_{n\in \omega} \cdot (y_n)_{n\in \omega}=(x_n\cdot y_n)_{n\in \omega} $. %The monoid $\varprojlim_{x\mapsto x^p}K$ will be the underlying multiplicative monoid of $K^{\flat}$. 
Similarly, one can define the multiplicative monoid $ \varprojlim _{x\mapsto x^p} \Oo_K$.

Let $\varpi \in \Oo_K$ be such that $0<v \varpi \leq vp$ (e.g., $\varpi=p$ when $\text{char}(K)=0$ and $\varpi=0$ when $\text{char}(K)=p$) and consider the ring $\varprojlim_{\Phi} \Oo_K/(\varpi)$ which is the limit of the inverse system of rings
$$...\xrightarrow{\Phi} \Oo_K/(\varpi)\xrightarrow{\Phi} \Oo_K/(\varpi) \xrightarrow{\Phi} \Oo_K/(\varpi)$$
where $\Phi: \Oo_K/(\varpi)\to \Oo_K/(\varpi):x\mapsto x^p$ is the Frobenius homomorphism.
\bl [Lemma 3.4 $(i),(ii)$ \cite{Scholze}] \label{lemscholz1}
$(a)$ The ring $\varprojlim_{\Phi} \Oo_K/(\varpi)$ is independent of the choice of $\varpi$ and there is a multiplicative isomorphism $\varprojlim_{x\mapsto x^p} \Oo_K \xrightarrow{\cong} \varprojlim_{\Phi} \Oo_K/(\varpi)$. Moreover, we get a multiplicative morphism $\sharp:\varprojlim_{\Phi} \Oo_K/(\varpi) \to \Oo_K:x\mapsto x^{\sharp}$ such that if $x=(x_n+(\varpi))_{n\in \omega}$, then $x^{\sharp}\equiv x_0 \mod (\varpi)$.\\
$(b)$ There is an element $\varpi^{\flat} \in \varprojlim_{\Phi} \Oo_K/(\varpi)$ with $v(\varpi^{\flat})^{\sharp}=v\varpi$. 
\el
The definition of $\sharp$ goes as follows: Let $x=(x_n)_{n\in \omega}\in \varprojlim_{\Phi} \Oo_K/(\varpi)$, i.e. $x_n\in \Oo_K/(\varpi)$ and $x_{n+1}^p=x_n$. Let $\tilde{x}_n\in \Oo_K$ be an arbitrary lift of $x_n\in \Oo_K/(\varpi)$. Then the limit $\lim_{n\to \infty} \tilde{x}_n^{p^n}$ exists and is independent of the choice of the $\tilde{x}_n$'s (see Lemma 3.4$(i)$ \cite{Scholze}). We define $x^{\sharp}:=\lim_{n\to \infty} \tilde{x}_n^{p^n}$. 

We now introduce $K^{\flat}:=\varprojlim_{\Phi} \Oo_K/(\varpi) [(\varpi^{\flat})^{-1}]$. A priori this is merely a ring. It is in fact a valued field according to the following:
\bl [Lemma 3.4 $(iii)$ \cite{Scholze}]\label{lemscholz}
$(a)$ There is a morphism of multiplicative monoids $K^{\flat}\to K:x\mapsto x^{\sharp}$ (extending the one of Lemma \ref{lemscholz1}$(a)$), which induces a morphism of multiplicative monoids $K^{\flat} \xrightarrow {\cong} \varprojlim_{x\mapsto x^p}K:x\mapsto (x^{\sharp},(x^{\sharp})^{1/p},...)$. The map $v^{\flat}:K^{\flat}\to \Gamma_v\cup \{\infty\}:x\mapsto vx^{\sharp}$ is a valuation on $K^{\flat}$, which makes $(K^{\flat},v^{\flat})$ into a perfectoid field of characteristic $p$. If $\Oo_{K^{\flat}}$ is the valuation ring of $K^{\flat}$, then we have a ring isomorphism $\Oo_{K^{\flat}} \cong \varprojlim_{\Phi} \Oo_K/(\varpi)$.\\
$(b)$ We have an isomorphism of ordered abelian groups $(\Gamma_v,v\varpi)\cong (v^{\flat}K^{\flat}, v^{\flat} \varpi^{\flat})$ and a field isomorphism $k_v\cong k_{v^{\flat}}$. Moreover, we have a ring isomorphism $\Oo_K/(\varpi)\cong \Oo_{K^{\flat}}/(\varpi^{\flat})$.
\el 
\begin{rem} 
Lemma \ref{lemscholz}$(a)$ allows us to identify the multiplicative underlying monoid of $K^{\flat}$ with $\varprojlim_{x\mapsto x^p} K$. It is not hard to see that, via this identification, addition is described by $(x_n)_{n\in \omega}+(y_n)_{n\in \omega}=(z_n)_{n\in \omega}$, where $z_n=\lim_{m\to \infty} (x_{n+m}+y_{n+m})^{p^m}$.
\end{rem}
\begin{definition}
We say that the valued field $(K^{\flat},v^{\flat})$ constructed in Lemma \ref{lemscholz}$(a)$ is the tilt of the perfectoid field $(K,v)$.
\end{definition}

\begin{rem} [Lemma 3.4 $(iv)$ \cite{Scholze}] \label{itself}
If $(K,v)$ is a perfectoid field of characteristic $p$, then $(K^{\flat},v^{\flat})\cong (K,v)$.
\end{rem}

\begin{example}[see Corollary \ref{tiltindeed}]  \label{perfex}
In the examples below, $\widehat{K}$ stands for the $p$-adic (resp. $t$-adic) completion of the field $K$ depending on whether its characteristic is $0$ or $p$.\\
$(a)$ $\widehat{\Q_p(p^{1/p^{\infty}})} ^{\flat}\cong \widehat{\F_p(\!(t)\!)^{1/p^{\infty}}}$ and $t^{\sharp}= p$.\\%$p^{\flat}$ can be chosen to be $t$.\\
$(b)$ $ \widehat{\Q_p(\zeta_{p^{\infty}})}^{\flat}\cong \widehat{\F_p(\!(t)\!)^{1/p^{\infty}}}$ and $(t+1)^{\sharp}= \zeta_p$.\\%$p^{\flat}$ can be chosen to be $t^{p-1}$.\\
$(c)$ $\widehat{\Q_p^{ab}}^{\flat} \cong \widehat{\overline{ \F}_p(\!(t)\!)^{1/p^{\infty}}}$ and $(t+1)^{\sharp}= \zeta_p$.
\end{example}

\begin{rem}
The tilting construction makes sense for non-perfectoid fields as well. However, in the absence of infinite wild ramification, it is too lossy for it to be useful (e.g., $\Q_p^{\flat}=\F_p$).
\end{rem}
\subsection{Witt vectors} \label{Witt}
We review the basics of Witt vectors. Details and proofs can be found in \S 5,6 \cite{Ser}, \S 3 \cite{KedLiu} and \S 6 \cite{vdd}.
\subsubsection{$p$-rings}
\begin{definition}
A ring $R$ of characteristic $p$ is called perfect if the Frobenius homomorphism $\Phi:R\to R:x\mapsto x^p$ is bijective.
\end{definition}
\begin{definition}
$(a)$ A $p$-ring is a commutative ring $A$ provided with a filtration $\mathfrak{a}_1\supset \mathfrak{a}_2\supset...$ such that $\mathfrak{a}_n\mathfrak{a}_m\subset \mathfrak{a}_{n+m}$ and so that $A$ is Hausdorff and complete with respect to the topology induced by the filtration and $A/\mathfrak{a}_1$ is a perfect ring of characteristic $p$.\\
$(b)$ If in addition $\mathfrak{a}_n=p^nA$ and $p$ is not a zero-divisor, then we say that $A$ is a strict $p$-ring.
\end{definition}
If $A$ is a $p$-ring, we call the quotient ring $A/pA$ the \textit{residue ring} of the $p$-ring $A$ and write $\text{res}:A\to A/pA$ for the quotient map. A system of \textit{multiplicative representatives} (or simply a system of representatives) is a multiplicative homomorphism $f_A: A/pA\to A$ such that $f_A(\text{res}(x))=x$. A $p$-ring $A$ always has a system of representatives $f_A:A/pA\to A$ and when $A$ is strict every element $a\in A$ can be written uniquely in the form $a=\sum_{i=0}^{\infty} f_A(\alpha_i)\cdot p^i$ (pg. 37 \cite{Ser}). 
\bt [Corollary pg.39 \cite{Ser}] \label{serthm}
For every perfect ring $R$, there exists a unique strict $p$-ring, denoted by $W(R)$, with residue ring $W(R)/pW(R)\cong R$.
\et 
The ring $W(R)$ is said to be the \textit{ring of Witt vectors} over the ring $R$. The uniqueness part of Theorem \ref{serthm} follows from the next result, which we record here for later use. 
\begin{fact} [Lemma 3.3.2 \cite{KedLiu}] \label{univwitt}
Let $A$ be a strict $p$-ring and $f_A:A/pA\to A$ be a system of representatives. Let $A'$ be a $p$-adically complete ring and $\phi: A/pA \to A'/pA'$ be a ring homomorphism. Then there exists a unique ring homomorphism $g:A\to A'$ making the diagram below commute
\[
\begin{tikzcd}
A \arrow[r, "g"] \arrow[d]
& A' \arrow[d] \\
A/pA \arrow[r, "\phi"]
& [blue] A'/pA'
\end{tikzcd}
\]
where the vertical arrows are the projections modulo $p$. More precisely, there exists a unique lift $\phi: A/pA \to A'/pA'$ to a multiplicative map $\tilde{\phi}:A/pA \to A'$ and we have $g(\sum_{i=0}^{\infty} f_A(\alpha_i) \cdot p^i)= \sum_{i=0}^{\infty} \tilde{\phi}(\alpha_i) \cdot p^i$.
\end{fact} 
%We describe the map $g$ here for later use. Write  as follows. For $x\in A/pA$ and $n\in \N$, choose a lift $y_n \in A'$ of $\phi(x^{p^{-n}})$. Given $0\leq m\leq n$, one has $y_n^{p^{n-m}}\equiv y_m \mod pA'$ and thus $y_n^{p^n}\equiv y_m^{p^m}\mod p^{m+1}A'$. To derive the latter from the former, one needs to use that $a\equiv b\mod pA'\Rightarrow a^{p^{m}}\equiv b^{p^m}\mod p^{m+1}$. We now have that $(y_n^{p^n})_{n\in \omega}$ is Cauchy and that $\lim_{n\to \infty} y_n^{p^n} \in A'$. Moreover, by  similar considerations one sees that the limit is independent of the choice of lifts. We define $\tilde{\phi}(x)=\lim_{n\to \infty} y_n^{p^n}$. We finally define 

\subsubsection{Teichm\"uller representatives} \label{teich}
There is a system of \textit{Teichm\"uller representatives} of $R$ in $W(R)$. This is the (unique) multiplicative homomorphism $[\ ]:R\to W(R)$ with the property that $\text{res}([x])=x$ for all $x\in R$. Explicitly, for $x\in R$ and $n\in \N$, let $x_n\in R$ be such that $x_n^{p^n}=x$ and $\tilde{x}_n\in W(R)$ be an arbitrary lift of $x_n$. The sequence $(\tilde{x}_n^{p^n})_{n\in \omega}$ is a Cauchy sequence, whose limit is independent of the chosen lifts. We let $[x]:=\lim_{n\to \infty} \tilde{x}_n^{p^n}$ (see Proposition 8 pg. 35 \cite{Ser}). 

It is easy to see that any element $x\in W(R)$ can be written \textit{uniquely} in the form $ x=\sum_{n=0}^{\infty}[x_n]\cdot p^n$, for some $x_i\in R$. The vector $(x_0,x_1,...)\in R^{\omega}$ is called the \textit{Teichm\"uller vector} of $x$. 
\subsubsection{Witt vectors}
The advantage of Witt vectors over Teichm\"uller vectors, comes from the fact that the ring operations in $W(R)$ have nicer coordinatewise descriptions when using the former (see \S \ref{ringop}). Write $x\in W(R)$ in the form $ x=\sum_{n=0}^{\infty}[x_n^{p^{-n}}]\cdot p^n$, for some $x_i\in R$. The vector $(x_0,x_1,...)\in R^{\omega}$ is called the \textit{Witt vector} of $x$. 
\subsubsection{Ring operations} \label{ringop}
By the discussion above, the ring $W(R)$ can be thought of as the $p$-adic analogue of formal power series with coefficients in $R$. By identifying $x$ with its Witt vector, we see that $W(R)$ has $R^{\omega}$ as its underlying set. By Lemma 6.5 \cite{vdd}, the ring operations are given by
$$(a_0,a_1,...)+(b_0,b_1,...)=(S_0(a_0,b_0),S_1(a_0,a_1,b_0,b_1),...)$$ 
and 
$$(a_0,a_1,...)\cdot (b_0,b_1,...)=(P_0(a_0,b_0),P_1(a_0,a_1,b_0,b_1),...)$$
for suitable polynomials $S_i,P_i \in \Z[x_0,...,x_i,y_0,...,y_i]$ which are universal, in the sense that they do not depend on $R$. 
\bob \label{compobwitt}
The polynomials $S_i$ (resp. $P_i$) are \textit{computable}, i.e. the function $\N\to \Z[x_0,y_0,...]: n\mapsto S_n$ (resp. $\N\to \Z[x_0,y_0,...]: n\mapsto P_n$) is recursive.
\eob
\begin{proof}
For $n\in \N$, we introduce the $n$-th Witt polynomial $W_n(x_0,...,x_n)=x_0^{p^n}+px_1^{p^{n-1}}+...+p^n\cdot x_n \in \Z[x_0,...,x_n]$ (pg. 135 \cite{vdd}). The proof of Lemma 6.5 \cite{vdd} shows that $S_0(x_0,y_0)=x_0+y_0$ and $W_{n-1}(S_0^p,...,S_{n-1}^p)+ p^n\cdot S_{n}=W_{n-1}(S_0(x_0^p,y_0^p),...,S_{n-1}(x_0^p,...,x_{n-1}^p,y_0^p,...,y_{n-1}^p))+p^n(x_n+y_n)$, whence the polynomial $S_n$ may be computed recursively from $S_0,...,S_{n-1}$. The proof is similar for the $P_n$'s.
\end{proof}
\subsection{Truncated Witt vectors}
\subsubsection{Definition}
In this paper, we will mostly be working with truncated Witt vectors. These can be thought of as $p$-adic analogues of truncated power series, i.e. elements of the ring $R[\![t]\!]/(t^n)\cong R[t]/(t^n)$ (over some base ring $R$). More formally:
\begin{definition}
Let $R$ be a perfect ring. Given $n\in \N$, the ring of $n$-truncated Witt vectors over $R$ is defined as $W_n(R):=W(R)/p^nW(R)$.
\end{definition}

\subsubsection{Language}
For a perfect ring $R$, the pair $(W(R),R)$ (resp. $(W_n(R),R)$) is viewed as a two-sorted structure with sorts $\textbf{W}$ for the Witt ring $W(R)$ (resp. $W_n(R)$) and $\textbf{R}$ for the residue ring $R$. The sort $\textbf{W}$ is equipped with the language of rings $L_{\text{rings}}$, while the sort $\textbf{R}$ may be equipped with any $L\supseteq L_{\text{rings}}$. We also have a function symbol for the Teichm\"uller map $[ \ ]:\textbf{R}\to \textbf{W}$. For each choice of a language $L$ for the $\textbf{R}$-sort, the resulting language will be denoted by $\langle L_{\text{rings}},L \rangle$.
\subsubsection{Interpretability in $R$}

\bl \label{trunclemma}
Let $R$ be a perfect ring, viewed as an $L$-structure with $L\supseteq L_{\text{rings}}$. For each $n\in \N$, there exists a quantifier-free interpretation $\Gamma_n$ of the $\langle L_{\text{rings}},L \rangle$-structure $(W_n(R),R)$ in the $L$-structure $R$ such that the sequence of interpretations $(\Gamma_n)_{n\in \omega}$ is uniformly recursive.
\el 
\begin{proof}
By \S \ref{ringop}, for $n\in \N$ the underlying set of $W_n(R)$ can be identified with $R^{n}$, so we take $\partial_{\Gamma_n}(x_1,...,x_n)$ to be $\bigwedge_{i=1}^n x_i=x_i$ and the coordinate map $f_{\Gamma_n}: R^n \to W_n(R)$ as the identity map. The ring operations of $W_n(R)$ are given by $(a_0,...,a_{n-1})+(b_0,...,b_{n-1})=(S_0(a_0,b_0),...,S_{n-1}(a_0,b_0,...,a_{n-1},b_{n-1}))$
and $(a_0,...,a_{n-1})\cdot (b_0,...,b_{n-1})=(P_0(a_0,b_0),...,P_{n-1}(a_0,b_0,...,a_{n-1},b_{n-1}))$, for certain polynomials $S_i,P_i \in \Z[x_0,...,x_i,y_0,...,y_i]$, for $i=0,...,n-1$. We now need to describe the map $\phi\mapsto \phi_{\Gamma_n}$ on \textit{unnested} atomic $L_{\text{rings}}$-formulas: 
\begin{enumerate}
\item If $\phi(x,y,z)$ is the formula $x+y=z$ (here $x,y,z \in \textbf{W}$), we may take $\phi_{\Gamma_n}(\overline{x}, \overline{y}, \overline{z})$ to be the $L_{\text{rings}}$-formula $\bigwedge_{i=0}^{n-1} z_i=S_i(x_0,y_0,...,x_{n-1},y_{n-1})$.

\item If $\phi(x,y,z)$ is the formula $x\cdot y=z$ (here $x,y,z \in \textbf{W}$), we may take $\phi_{\Gamma_n}(\overline{x}, \overline{y}, \overline{z})$ to be the $L_{\text{rings}}$-formula $\bigwedge_{i=0}^{n-1} z_i=P_i(x_0,y_0,...,x_{n-1},y_{n-1})$.

\item If $\phi(x,y)$ is the formula $[x]=y$ (here $x\in \textbf{R}$ and $y\in \textbf{W}$), we may take $\phi_{\Gamma_n}(x, \overline{y})$ to be the formula $\bigwedge_{i=1}^{n-1} y_i=0 \land y_0=x$.
\item If $\phi(\overline{x})$ is an unnested atomic $L$-formula with variables from the sort $\textbf{R}$, then we may take $\phi_{\Gamma_n}(\overline{x}):=\phi(\overline{x})$ (here $x_i\in \textbf{R}$ for the latter formula).
\end{enumerate}
The above data define a quantifier-free interpretation $\Gamma_n$ of the $\langle L_{\text{rings}},L \rangle$-structure $(W_n(R),R)$ in the $L$-structure $R$. Moreover, since the polynomials $S_i,P_i$ are computable (Observation \ref{compobwitt}), the sequence of interpretations $(\Gamma_n)_{n\in \omega}$ is uniformly recursive. 
\end{proof}
%The need to use truncated Witt vectors will become clear in \S \ref{reldecsec}.
\subsection{Untilting} \label{Untilting}
Expository notes on the material of this section may be found either in \S 5 of the Bourbaki seminar given by Morrow \cite{Morrow} or in the lecture series notes by Lurie (see Lectures 2,3 \cite{Lu}).
%\subsubsection{Introduction}
% We review here the basic concepts and facts, which will later allow us to define the concept of a computable untilt. This notion will play a significant role in Theorem \ref{reldec}.
\subsubsection{Overview} \label{overviewuntilt}
The functor $K\mapsto K^{\flat}$ is far from being faithful, i.e. there will be several non-isomorphic mixed characteristic perfectoid fields $K$ that tilt to the same perfectoid field of characteristic $p$. For example, the $p$-adic completions of $\Q_p(p^{1/p^{\infty}})$ and $\Q_p(\zeta_{p^{\infty}})$ both tilt to the $t$-adic completion of $\F_p(\!(t)\!)^{1/p^{\infty}}$. For a perfectoid field $F$ of characteristic $p$, an \textit{untilt} of $F$ is a pair $(K,\iota)$, where $(K,v)$ is a perfectoid field and $\iota: (F,w)\stackrel{\cong} \to(K^{\flat},v^{\flat})$ is a valued field isomorphism. Fargues-Fontaine give a description of all possible untilts of $F$ in an \textit{intrinsic} fashion, i.e. in a way that uses only arithmetic from $F$ itself (see Theorem \ref{Font}). This result will be of vital importance for Theorem \ref{reldec2}. 
%\subsubsection{Fargues-Fontaine curve}
%n general, the untilts of $F$ are parametrized by the so-called Fargues-Fontaine curve. We shall not describe the Fargues-Fontaine curve here, but we will present Theorem \ref{Font}, which is an important step in its construction. 
\subsubsection{The ring $\textbf{A}_{\text{inf}}$}
Fix any perfectoid field $(F,w)$ of characteristic $p>0$. We introduce $\textbf{A}_{\text{inf}}:=W(\mathcal{O}_F)$, called the \textit{infinitesimal} period ring, which is the ring of Witt vectors over $\Oo_F$. 
\begin{definition}
An element $\xi \in \textbf{A}_{\text{inf}}$ is said to be \textit{distinguished} if it is of the form $\xi=\sum_{n=0}^{\infty} [c_n]p^n$, with $c_0\in \mathfrak{m}_F$ and $c_1\in \mathcal{O}_F^{\times}$. 
\end{definition}
In other words, distinguished elements are those of the form $\xi=[\pi]-up$ where $w\pi>0$ and $u\in \textbf{A}_{\text{inf}}$ is a unit. 
\begin{rem} \label{twisteddist}
Let $\text{res}:\Oo_F\to \Oo_F/\mathfrak{m}_F $ be the residue map and $W(\text{res}):\textbf{A}_{\text{inf}}\to W(\Oo_F/\mathfrak{m}_F)$ be the (unique) induced ring homomorphism provided by Fact \ref{univwitt} that maps $\sum_{n=0}^{\infty} [c_n]p^n\mapsto \sum_{n=0}^{\infty} [\text{res}(c_n)]\cdot p^n$. The element $\xi \in \textbf{A}_{\text{inf}}$ is distinguished precisely when $W(\text{res})(\xi)$ is a unit multiple of $p$ in $W(\Oo_F/\mathfrak{m}_F)$.
\end{rem}
\subsubsection{Distinguished elements and untilts}
We outline how one can go from an untilt of $F$ to an ideal of $\textbf{A}_{\text{inf}}$ generated by a distinguished element and vice versa. Let $(K,\iota)$ be an untilt of $F$, i.e. we have $\iota: (F,w) \stackrel{\cong}\to (K^{\flat},v^{\flat}) $. By Lemma \ref{lemscholz1}$(a)$, we have a morphism of multiplicative monoids $\sharp: \Oo_{K^{\flat}}\to  \Oo_K$. We also write $\sharp:\Oo_{F} \to  \Oo_K$ for the composite map $\Oo_{F} \xrightarrow{\iota} \Oo_{K^{\flat}} \xrightarrow{\sharp}  \Oo_K$. While $\sharp:\Oo_{F} \to  \Oo_K$ is not a ring homomorphism (unless $K$ has characteristic $p$), it does induce a ring homomorphism $\phi: \Oo_F\to \Oo_K/(p ):x\mapsto x^{\sharp} \mod (p)$. Moreover, $\phi$ is \textit{surjective} since it descends to an isomorphism $\Oo_F/(\pi)\xrightarrow{\cong} \Oo_K/(p)$ for any $\pi \in \Oo_F$ with $w \pi=vp$. The map $\theta$ in the lemma below is important. We shall sketch the proof of the lemma for the convenience of the reader.
\bl [Lecture 3, Remarks 11-13 \cite{Lu}] \label{thetamap}
There exists a ring homomorphism $\theta: \textbf{A}_{\text{inf}}\to \Oo_K$ inducing $\phi$ above. Moreover, $\theta$ is surjective and $\theta^{-1}(\Oo_K^{\times})=\textbf{A}_{\text{inf}}^{\times}$.
\el
\begin{hproof}
Apply Fact \ref{univwitt} with $A=\textbf{A}_{\text{inf}} $, $A'= \Oo_K$ to get that $ \phi$ lifts uniquely to the ring homomorphism 
$$\theta: \textbf{A}_{\text{inf}}\to \Oo_K:\sum_{n=0}^{\infty} [c_n]\cdot p^n\mapsto \sum_{n=0}^{\infty}  c_n^{\sharp}\cdot p^n$$
We claim that $\theta$ is surjective. Recall that $\phi$ is surjective. Given $x\in \Oo_K$, we may thus find $c_0 \in \Oo_F$ such that $x=c_0^{\sharp}+x_1 \cdot p$, for some $x_1\in \Oo_K$. Similarly, we may find $c_1 \in \Oo_F$ such that $x_1=c_1^{\sharp}+x_2\cdot p$. We then get that $x=c_0^{\sharp}+c_1^{\sharp}\cdot p+x_2\cdot p^2$. Continuing this way and since $\Oo_K$ is $p$-adically complete, we may write $x=\sum_{n=0}^{\infty}  c_n^{\sharp}\cdot p^n$. To show that $\theta^{-1}(\Oo_K^{\times})=\textbf{A}_{\text{inf}}^{\times}$, observe that  
$$\sum_{n=0}^{\infty} [c_n]\cdot p^n \in \textbf{A}_{\text{inf}}^{\times} \iff c_0 \in \Oo_F^{\times}  \iff c_0^{\sharp}\in \Oo_K^{\times} \iff  \sum_{n=0}^{\infty}  c_n^{\sharp}\cdot p^n \in \Oo_K^{\times}$$
\end{hproof}
Let us examine the kernel of $\theta$. Let $\pi \in \Oo_F$ be as above (i.e., such that $w \pi=vp$) and write $\pi^{\sharp}=\overline{u}\cdot p$ for some $\overline{u}\in \Oo_K^{\times}$. By Lemma \ref{thetamap}, we may find $u \in \textbf{A}_{\text{inf}}^{\times}$ such that $\theta(u)=\overline{u}$. Note that $\xi=[\pi]-u\cdot p\in \textbf{A}_{\text{inf}}$ is a distinguished element and that $\xi \in \text{Ker}(\theta)$. In fact, the following is true:
\bp [Corollary 17 \cite{Lu}] \label{anydisting}
Let $(F,w)$ be a perfectoid field of characteristic $p$ and $(K,\iota)$ be an untilt. Let $\theta: \textbf{A}_{\text{inf}}\to \Oo_K$ be as above. Then $\text{Ker}(\theta)$ is a principal ideal generated by \textit{any} distinguished element $\xi \in \text{Ker}(\theta)$.
\ep 
Starting with an untilt $(K,\iota)$, we have thus produced an ideal $(\xi_K)\subseteq \textbf{A}_{\text{inf}}$, where $\xi_K $ is a distinguished element in $ \textbf{A}_{\text{inf}}$.\\

Conversely, starting with $(\xi)\subseteq \textbf{A}_{\text{inf}}$, with $\xi$ a distinguished element, we may produce an untilt $(K,\iota)$ of $F$ as follows. Write $\theta: \textbf{A}_{\text{inf}} \to \textbf{A}_{\text{inf}}/(\xi)$ for the quotient map. We then have:
\bl [Lecture 3, pg. 4, Claim $(a)$ \cite{Lu})] \label{claimlur}
For every $y\in \textbf{A}_{\text{inf}}/(\xi)$, there exists $x\in \Oo_F$ such that $(y)=(\theta([x]))$.
\el 
\bp [Proposition 16 \cite{Lu}] \label{defnofvaluntilt}
Let $(F,w)$ be a perfectoid field of characteristic $p$ and $\xi \in \textbf{A}_{\text{inf}}$ a distinguished element. Then the quotient ring $\textbf{A}_{\text{inf}}/(\xi)$ is the valuation ring $\Oo_K$ of a perfectoid field $(K,v)$ such that $(K^{\flat},v^{\flat})\cong (F,w)$. The valuation $v$ is such that if $y\in \textbf{A}_{\text{inf}}/(\xi)$, then $vy:=w x$ with $x\in \Oo_F$ so that $(y)=(\theta([x]))$ in $\textbf{A}_{\text{inf}}/(\xi)$. 
\ep 
The isomorphism $\iota: (F,w) \to (K^{\flat},v^{\flat})$ of Proposition \ref{defnofvaluntilt} is described as follows. If $\xi=[\pi]-up$, then $\Oo_K/(p)\cong W(\Oo_F)/(p,[\pi]-up) \cong \Oo_F/(\pi)$. Passing to inverse limits, this induces a ring isomorphism $\Oo_{K^{\flat}}\cong \Oo_F$, which in turn yields $\iota: F\stackrel{\cong }\to K^{\flat}$ by passing to fraction fields. Starting with $(\xi)\subseteq \textbf{A}_{\text{inf}}$, we have thus produced an untilt $(K,\iota)$.

\begin{definition}
Two untilts $(K,\iota)$ and $(K',\iota')$ are isomorphic when there exists a valued field isomorphism $\phi: K \stackrel{\cong} \to  K'$ inducing a commutative diagram 
\[
\begin{tikzcd}
F \arrow[dash]{d}{=}  \arrow[r, "\iota"]  
& K^{\flat}  \arrow["\phi^{\flat}",d] \\
F \arrow[r, "\iota'"]
& K'^{\flat} 
\end{tikzcd}
\]
where $\phi^{\flat}:(x,x^{1/p},...)\mapsto (\phi(x),\phi(x^{1/p}),...)$. Let $Y_F$ denote the set of characteristic $0$ untilts of $(F,w)$, up to isomorphism. 
\end{definition}
We write $0$ for the isomorphism class of the unique characteristic $p$ untilt of $(F,\iota)$, represented by $F$ itself together with the natural isomorphism $\iota:F\stackrel{\cong} \to F^{\flat}:x\to (x,x^{1/p},...)$, and set $\overline{Y}_F=Y_F\cup \{0\}$ for the set of all untilts of $(F,w)$, up to isomorphism. 
\bt[Fargues-Fontaine] \label{Font}
Let $(F,w)$ be a perfectoid field of characteristic $p$. The map $(\xi) \mapsto \text{Frac}(\Oo_F/(\xi))$ defines a bijective correspondence between the set of
ideals $(\xi)\subseteq \textbf{A}_{\text{inf}}$ generated by a distinguished element and the set $\overline{Y}_F$.
\et  
\begin{proof}
See Proposition 5.1 \cite{Morrow} or Lecture 2, Corollary 18 \cite{Lu}.
\end{proof}
\begin{rem}
Let $(K,\iota)$ be an untilt of $(F,w)$ and $(\xi)=([\pi]-up)$ be its associated ideal. Note that $(p)=(\theta([\pi]))$ in $\textbf{A}_{\text{inf}}/(\xi)$ and therefore $v p=w \pi$.
\end{rem}

\subsubsection{Tilting equivalence} \label{tiltingequivsec}
We emphasized in \S \ref{overviewuntilt} that untilting is ambiguous, in the sense that there are many ways to untilt a perfectoid field of positive characteristic. However, the ambiguity is eliminated by \textit{fixing a base} perfectoid field $K$ and its associated tilt $K^{\flat}$. This leads to an equivalence of categories of perfectoid extensions, known as the \textit{tilting equivalence}: 
\bt [Tilting equivalence] \label{tiltingequiv}
The categories of perfectoid field extensions of $K$ and perfectoid field extensions of $K^{\flat}$ are equivalent.
\et 
\begin{proof}
See Theorem 2.8 \cite{ScholzeSurvey}. Theorem 5.2 \cite{Scholze} shows a more general result about perfectoid algebras; the case of perfectoid fields follows as a special case by Lemma 5.21 \cite{Scholze}.
\end{proof}
In the discussion after Theorem 2.8 \cite{ScholzeSurvey}, Scholze explains that there are two proofs of Theorem \ref{tiltingequiv}: 

\begin{enumerate}
\item His original proof in \cite{Scholze}, using Faltings' almost mathematics (see \S 4 \cite{Scholze}).

\item An alternative proof which describes the functor $\sharp$ inverse to $\flat$ as $L\mapsto W(\Oo_L)\otimes_{W(\Oo_{K^{\flat}})} K$ (see Remark 5.19 \cite{Scholze}).
\end{enumerate} 
Let us elaborate more on the second approach, which is in the spirit of \S \ref{Untilting} and will be more suitable for us. If $(\xi)\subset W(\Oo_{K^{\flat}})$ is the ideal associated to $K$, then one computes that $W(\Oo_L)\otimes_{W(\Oo_{K^{\flat}})} \Oo_K=W(\Oo_L)\otimes_{W(\Oo_{K^{\flat}})} W(\Oo_{K^{\flat}})/(\xi)=W(\Oo_L)/(\xi)$. In other words, if $L$ is a perfectoid field extending $K^{\flat}$, then $L^{\sharp}$ is simply the untilt of $K$ whose associated ideal in $W(\Oo_{L})$ is $(\xi)$.

\subsection{Space of untilts}
In \S \ref{modeltheorproperty} we exhibit an appealing model-theoretic property of the space $Y_F$ of untilts of $(F,w)$. This will not be used in the rest of the paper. We then study in \S \ref{size} the size of the space of untilts up to elementary equivalence. We will see that the cardinality is often too big, so that one cannot possibly expect $K$ to be decidable simply relative to $K^{\flat}$.
\subsubsection{Metric structure on $Y_F$}
Fargues-Fontaine equip $\overline{Y}_F$ with a metric topology, which allows us to view the space of untilts geometrically. Suppose $x=(K_x,v_x)$ and $y=(K_y,v_y)$ are two "points" of $\overline{Y}_F$, corresponding to the ideals $(\xi_x)$ and $(\xi_y)$ respectively, provided by Theorem \ref{Font}. We choose an embedding $\Gamma_w \hookrightarrow \R$, which determines embeddings $\Gamma_{v_y} \hookrightarrow \R$ for all $y\in Y_F$ via the canonical identification $\Gamma_{v_y} \cong \Gamma_w$. One then defines $d(x,y):=|\theta_y(\xi_x)|_y$, where $\theta_y: \textbf{A}_{\text{inf}}\twoheadrightarrow \textbf{A}_{\text{inf}}/(\xi_y)=\Oo_{K_y}$ is the quotient map and as usual $|a|_y= p^{-v_y(a)}$. 
\bp [Proposition 2.3.2$(1)$ \cite{FF}]
Let $d:\overline{Y}_F\times \overline{Y}_F\to \R$ be as above. The pair $(\overline{Y}_F,d)$ is a complete ultrametric space.
\ep 
\begin{proof}
See Propositions 2.3.2, 2.3.4 \cite{FF} or Lecture 14, Propositions 6, 7 \cite{Lu}.
\end{proof}
Recall that $0\in \overline{Y}_F$ is the isomorphism class of the untilt corresponding to $(F,w)$ itself together with the natural isomorphism $\iota:F\stackrel{\cong} \to F^{\flat}:x\to (x,x^{1/p},...)$. We can define a radius function $r(y):=d(0,y)$ for $y\in \overline{Y}_F$, which allows us to think of $\overline{Y}_F$ intuitively as the unit disc with center $0$. 
\subsubsection{A model-theoretic property of $Y_F$} \label{modeltheorproperty}
We now show that limits in the punctured unit disc $Y_F$, with respect to the Fargues-Fontaine metric, agree with limits in the model-theoretic sense:
\bp \label{obtopol}
Let $(x_n)_{n\in \omega}$ be a sequence in $Y_F$ such that $x_n\stackrel{d}{\rightarrow}x$ and $x\neq 0$. Let $(K_n,v_n)$ be the untilt associated to $x_n$ and $(K,v)$ the untilt associated to $x$. We set
$$(K^*,v^*):= \prod_{n\in \omega} (K_n,v_n)/U $$
for a non-principal ultrafilter $U$ on $\mathcal{P}(\N)$. Then $(K^*,v^*)\equiv (K,v)$ in $L_{\text{val}}$.
\ep 
\begin{proof}
Let $(\xi_n)=([\pi_n]-u_np)$ be the ideal in $\textbf{A}_{\text{inf}}$ corresponding to $(K_n,v_n)$ and $(\xi)=([\pi]-up)$ be the ideal corresponding to $(K,v)$. Note that the set $\{v_n p:n\in \N \}\subseteq \Gamma_w$ is bounded from above; otherwise, there would be a subsequence $(x_{n_k})_{k\in \N}$ with $x_{n_k}\to 0$.

Fix $m\in \N$ and let $(\bar \xi_n)$ and $(\bar \xi)$ be the images of the ideals $(\xi_n)$ and $(\xi)$ in $W_m(\mathcal{O}_F)$ via $\textbf{A}_{\text{inf}}\twoheadrightarrow \textbf{A}_{\text{inf}}/(p^m)=W_m(\mathcal{O}_F)$.\\
\textbf{Claim:} We have that $(\bar \xi_n)=(\bar \xi)$, for sufficiently large $n$.
\begin{proof}
We let $\theta: \textbf{A}_{\text{inf}}\to \textbf{A}_{\text{inf}}/(\xi)=\Oo_K$ be the quotient map. Since $d(x_n,x)\to 0$, we get that $v(\theta(\xi_n))\to \infty$. We will thus have that $\xi_n\equiv p^m \cdot \alpha_n \mod (\xi)$, for some $\alpha_n \in \textbf{A}_{\text{inf}}$ and for all $n>\!>0$. Consequently, one gets that $(\bar \xi_n)\subseteq (\bar \xi)$ for $n>\!>0$. Similarly, since $v_{n}(\theta_n(\xi))\to \infty$ and $\{v_n p:n\in \N \}$ is bounded, we get that $v_{n}(\theta_n(\xi))\geq mv_np$ for $n>\!>0$. It follows that there exists $\beta_n\in  \textbf{A}_{\text{inf}}$ such that $\xi\equiv p^m \cdot \beta_n \mod (\xi_n)$, for $n>\!>0$. We conclude that $(\bar \xi_n)=(\bar \xi)$, for $n>\!>0$.
\qedhere $_{\textit{Claim}}$ \end{proof}
It follows that $\Oo_{K_n}/(p^m) \cong W_m(\Oo_F)/(\overline{\xi}_n) = W_m(\Oo_F)/(\overline{\xi}) \cong \Oo_K/(p^m)$, for sufficiently large $n$. We also get that $(\xi_n)$ and $(\xi)$ have the same image in $\textbf{A}_{\text{inf}}/(p)\cong \Oo_F$ for $n>\!>0$ and therefore $(\pi_n)=(\pi)$ for $n>\!>0$. By Lemma \ref{lemscholz}$(b)$, we get that $(\Gamma_{v_n},v_np)\cong (\Gamma_w,w \pi_n)=(\Gamma_w,w \pi)\cong (\Gamma_v,vp)$, for all sufficiently large $n$. The conclusion follows from Theorem \ref{thO/p^nO} and \L o\'s' s Theorem.
\end{proof}

\subsubsection{The space $Z_F$} \label{size}
%The model-theoretic analogue of the space of untilts up to isomorphism 
It is natural to consider the set
%$$Z_F=\{(K,\iota):(K^{\flat},v^{\flat})\cong (F,w)\}/\equiv$$
$Y_F$ up to elementary equivalence. More precisely: 
\begin{definition}
Let $(F,w)$ be a perfectoid field of characteristic $p$. For $x=(K_x,\iota_x),y=(K_y,\iota_y)\in Y_F$ define the equivalence relation $x\sim y\iff (K_x,v_x)\equiv (K_y,v_y)$ in $L_{\text{val}}$. We define $Z_F:=Y_F/\sim$.
\end{definition}
Note that the definition of $Z_F$ only takes the underlying valued fields $(K,v)$ into account and not the map $\iota$. We now determine the size of $Z_F$ in a few cases in Proposition \ref{scholzprop}. For Proposition \ref{scholzprop}$(a)$, we will need the following:
\begin{fact} [Proposition 4.3 \cite{Scholze}] \label{scholzalgclosed}
Let $(K,v)$ be a perfectoid field with $K^{\flat}$ algebraically closed. Then $K$ is also algebraically closed.
\end{fact}
For Proposition \ref{scholzprop}$(b)$ we need an algebraic fact. A finite extension $L/K$ has the \textit{unique subfield property} if for every $d|[L:K]$, there is a unique subextension $F/K$ such that $[F:K]=d$. 
\begin{fact} [Theorem 2.1 \cite{Oro}] \label{orozco}
Let $K$ be a field, $n\in \N$ be such that $\text{char}(K)\nmid n$, $a\in K$ such that $X^n-a\in K[X]$ is irreducible and set $L=K(a^{1/n})$. Suppose that for every odd prime $p|n$, we have that $\zeta_p \notin L\backslash K$ and in case $4|n$ we have $\zeta_4\notin L \backslash K$. Then $L/K$ has the unique subfield property.
\end{fact} 
The construction in Proposition \ref{scholzprop}$(b)$ is an elaborated version of Scholze's answer \cite{ScholzeMO} to a closely related mathoverflow question asked by the author. 
\bp \label{scholzprop}
$(a)$ Let $(F,w)=(\widehat{\overline{\F_p(\!(t)\!)}},v_t)$, the $t$-adic completion of an algebraic closure of $\F_p(\!(t)\!)$. Then $|Z_F|=1$.\\
$(b)$ Set $(F,w)=(\widehat{\F_p(\!(t)\!)^{1/p^{\infty}}},v_t)$. Then $|Z_F|=\mathfrak{c}$.
%$(c)$ Set $(F,w)=(\F_p(\!(t^{\Gamma})\!),v_t)$, where $\F_p(\!(t^{\Gamma})\!)$ is the Hahn field with value group $\Gamma=\frac{1}{p^{\infty}}\Z$ and residue field $\F_p$. Then $|Z_F|=\mathfrak{c}$.
\ep 
\begin{proof}
$(a)$ \textbf{First proof: }Let $(K,v)$ be a perfectoid field of characteristic $0$ such that $(K^{\flat},v^{\flat})\cong (\widehat{\overline{\F_p(\!(t)\!)}},v_t)$. By Fact \ref{scholzalgclosed}, we get that $K$ is algebraically closed. By Robinson's completeness of the theory $\text{ACVF}_{(0,p)}$ of algebraically closed valued fields of mixed characteristic $(0,p)$ (see Corollary 3.34 \cite{vdd}), we get that $\text{Th}(K,v)=\text{ACVF}_{(0,p)}$ and hence $|Z_F|=1$.\\
\textbf{Second proof: } Let $\Cc_p=\widehat{\overline{\Q_p}}$ be the completed algebraic closure of $\Q_p$. By Remark 2.24 \cite{FF2}, we have a ring isomorphism $\Oo_K/(p^n)\cong \Oo_{\Cc_p}/(p^n)$ for each $n\in \N$. One also has that $(\Gamma_v,vp)\equiv (\Gamma_{\Cc_p},v_p p)$ in $L_{\text{oag}}$ with a constant for $vp$, by an easy application of quantifier elimination for the theory $\text{ODAG}$ of ordered divisible abelian groups (Corollary 3.1.17 \cite{Mark}). We conclude that $(K,v)\equiv (\Cc_p,v_p)$ from Theorem \ref{thO/p^nO}. \\
$(b)$ We assume that $p>2$; we indicate the necessary changes for the case $p=2$ in the end of the proof. For each $\alpha \in 2^{\omega}$, we define an algebraic extension $K_{\alpha}$ of $\Q_p$ as follows. We write $\alpha\restriction n$ for the restriction of $\alpha$ to $n=\{0,1,...,n-1\}$ (set-theoretically $\alpha \restriction 0=0$). We now define inductively:
\begin{enumerate}
\item $K_0=\Q_p$ and $\pi_0=p$.
\item $K_{\alpha\restriction n}=K_{\alpha\restriction (n-1)}(((1+p)^{\alpha(n-1)}\cdot \pi_{\alpha\restriction (n-1)})^{1/p} )$ and $\pi_{\alpha\restriction n}=((1+p)^{\alpha(n-1)}\cdot \pi_{\alpha\restriction (n-1)})^{1/p} $. 
\end{enumerate}
Set $\overline{\alpha}_n=\sum_{k=0}^{n-1} \alpha(k)\cdot p^k$ for $n\in \N^{>0}$ and $\overline{\alpha}_0=1$ by convention. Note that $X^{p^n}-(1+p)^{\overline{\alpha}_n}p \in \Z_p[X]$ is an Eisenstein (hence irreducible) polynomial and that $K_{\alpha\restriction n}=\Q_p(((1+p)^{\overline{\alpha}_n}p)^{1/p^n})$. We let $K_{\alpha}=\bigcup_{n\in \omega} K_{\alpha \restriction n}$. Visually, the field $K_{\alpha}$ is obtained by taking the union of all fields along a certain branch (corresponding to $\alpha$) in the binary tree below
\[
\begin{tikzpicture}[level distance=1.2cm,
  level 1/.style={sibling distance=7.6cm},
  level 2/.style={sibling distance=4cm}]
  level 3/.style={sibling distance=4cm}]
  \node {$\Q_p$}
    child {node {$\Q_p(p^{1/p})$}
      child {node {$\Q_p(p^{1/p^2})$}
           child {node {\vdots}}}
      child {node {$\Q_p(((1+p)^p p)^{1/p^2})$}
           child {node {\vdots}}}
    }
    child {node {$\Q_p(((1+p)p)^{1/p}$)}
    child {node {$\Q_p(((1+p) p)^{1/p^2})$}
         child {node {\vdots}}}
      child {node {$\Q_p(((1+p)^{1+p} p)^{1/p^2})$}
       child {node {\vdots}}}
    };
\end{tikzpicture}
\]
\textbf{Claim 1:} For each $\alpha\in 2^{\omega}$, we have $\zeta_p\notin K_{\alpha}$ and $(1+p)^{1/p}\notin K_{\alpha}$.
\begin{proof}
Note that $\zeta_{p}\notin K_{\alpha}$ since $e(K_{\alpha\restriction n}/\Q_p)=p^n$ while $e(\Q_p(\zeta_p)/\Q_p)=p-1$. Suppose that $(1+p)^{1/p}\in K_{\alpha\restriction n}$, for some $n\in \N$. By Fact \ref{orozco}, we would have that $K_{\alpha \restriction 1}=\Q_p((1+p)^{1/p})$. If $\alpha(0)=0$, this would imply that $\Q_p(p^{1/p})=\Q_p((1+p)^{1/p})
$. If $\alpha(0)= 1$,  this would imply that $\Q_p(((1+p)\cdot p)^{1/p})=\Q_p((1+p)^{1/p})$. In either case, we would get that $\Q_p(p^{1/p})=\Q_p((1+p)^{1/p})$. However, one sees that $(1+p)^{1/p}\notin \Q_p(p^{1/p})$. Indeed, suppose $a \in \Q_p(p^{1/p})$ is such that $a^p=1+p$. Write $a\equiv c_0+ c_1\cdot p^{1/p}\mod p^{2/p} \Z_p[p^{1/p}]$, for some $c_0,c_1\in \{0,...,p-1\}$. We then compute $a^p\equiv c_0^p+c_1^p\cdot p+c_0^{p-1}\cdot c_1\cdot p^{1+1/p} \not \equiv 1+p \mod p^{1+2/p}$, for any choice of $c_0$ and $c_1$. We conclude that $(1+p)^{1/p}\notin K_{\alpha}$. \\
\qedhere $_{\textit{Claim 1}}$ \end{proof}
Using Claim 1, we show:\\
\textbf{Claim 2:} If $\alpha \neq \beta$, then $\widehat{K_{\alpha}}\not \equiv_{\exists^1} \widehat{K_{\beta}}$ in $L_{\text{rings}}$. 
\begin{proof}
Note that $K_{\alpha}\equiv_{\exists^1} \widehat{K_{\alpha}}$ (resp. $K_{\beta}\equiv_{\exists^1} \widehat{K_{\beta}}$) by Theorem \ref{exthO/p^nO}. It suffices to show that $K_{\alpha} \not \equiv_{\exists^1}  K_{\beta}$ for $\alpha \neq \beta$. Let $n\in \omega$ be least such that $\alpha(n)\neq \beta(n)$ and say $\alpha(n)=1$. Now if $K_{\alpha} \equiv_{\exists^1} K_{\beta}$, there would be $a,b\in K_{\alpha}$ such that $a^{p^n}=(1+p)^{\overline{\alpha}_n} p$ and $b^{p^n}=(1+p)^{\overline{\beta}_n}p$. Set $c:=\frac{a}{b}$ and note that $c^{p^n}=\frac{(1+p)^{\overline{\alpha}_n} p}{(1+p)^{\overline{\beta}_n} p}=(1+p)^{p^{n-1}}$. This implies that $(\frac{c^p}{1+p})^{p^{n-1}}=1$. We conclude that either $\zeta_p\in K_{\alpha}$ or $c^p=1+p$, both of which are impossible by Claim 1.
\qedhere $_{\textit{Claim 2}}$ \end{proof}
Next we prove:\\
\textbf{Claim 3:} For every $\alpha \in 2^{\omega}$, we have that $\widehat{K_{\alpha}}^{\flat}=\widehat{\F_p(\!(t)\!)^{1/p^{\infty}}}$.
\begin{proof}
For every $n\in \N$, we have that $K_{\alpha\restriction (n+1)}/K_{\alpha\restriction n}$ is totally ramified and $\Oo_{K_{\alpha\restriction (n+1)}}=\Oo_{K_{\alpha\restriction n}}[\pi_{\alpha\restriction (n+1)}]$ (Corollary pg.19 \cite{Ser}). It follows that $\Oo_{K_{\alpha}}=\Z_p[\{ \pi_{\alpha\restriction n}:n\in \N\}]$. We compute 
$$\Oo_{K_{\alpha}}/(p)=\Z_p[x_1,x_2,...]/(p,x_1^p-(1+p)^{\alpha(0)}p, x_2^p-(1+p)^{\alpha(1)}x_1,...)\cong $$ 
$$\cong \F_p[x_1,x_2,...]/(x_1^{p},x_2^p-x_1,...) \cong \F_p[x_1^{1/p^{\infty}}]/(x_1^{p})\stackrel{t=x_1^p}= \F_p[t^{1/p^{\infty}}]/(t)$$
We thus get that $\varprojlim_{\Phi}\Oo_{K_{\alpha}}/(p) \cong \widehat{\F_p[\![t]\!]^{1/p^{\infty}}}$ and the conclusion follows.
\qedhere $_{\textit{Claim 3}}$ \end{proof}
Claims 2,3 show that $|Z_F|\geq \mathfrak{c}$. On the other hand, the bound $|Z_F|\leq \mathfrak{c}$ is automatic, as any first-order $L_{\text{val}}$-theory can be identified with a subset of $\text{Sent}_{L_{\text{val}}}\simeq \N$. This finishes the proof in case $p>2$. For $p=2$, one first proves a variant of Claim 1, namely that $\zeta_4\notin K_{\alpha}$ and $(\pm 3)^{1/2}\notin K_{\alpha}$. For the former, one can check inductively that  $\zeta_4\notin  K_{\alpha\restriction n}$ using Kummer theory for quadratic extensions. The proof of Claim 2 then goes through. Finally, the proof of Claim 3 works verbatim for $p=2$.
\end{proof}
\begin{rem} \label{kedlayatemkin}
Set $(F,w)=(\widehat{\overline{\F_p(\!(t)\!)}},v_t)$. In Remark 2.24 \cite{FF2}, Fargues-Fontaine ask whether every untilt $(K,v)$ of $(F,w)$ is isomorphic to $(\Cc_p,v_p)$. They note in Remark 2.24 that for any untilt $(K,v)$ of $(F,w)$ one has $\Oo_K/(p^n)\cong \Oo_{\Cc_p}/(p^n)$ for each $n\in \N$. However, these isomorphisms are obtained in a non-canonical way and do not necessarily yield a valued field isomorphism between $K$ and $\Cc_p$. In fact, an example of such a $K$ with $K \not \cong \Cc_p$ was provided by Kedlaya-Temkin in Theorem 1.3 \cite{KedTem}. This should be contrasted with Proposition \ref{scholzprop}$(a)$, saying that all untilts of $(F,w)$ are elementary equivalent.
\end{rem} 

\begin{rem}
In Remark 7.6 \cite{AnscombeFehm}, the authors write:\\"\textit{At present, we do not know of an example of a mixed characteristic henselian valued field $(K, v)$ for which $\text{Th}_{\exists}(k_v)$ and $\text{Th}_{\exists}(\Gamma_v, vp)$ are decidable but $\text{Th}_{\exists}(K, v)$ is undecidable.}"

We note that such an example indeed exists. By Proposition \ref{scholzprop}$(b)$ and the fact that there are countably many Turing machines, there must exist an undecidable perfectoid (thus henselian) field $(K,v)$ with $K^{\flat}\cong \widehat{\F_p(\!(t)\!)^{1/p^{\infty}}}$. Moreover, the proof even provides us with a field $K$ whose algebraic part is undecidable. On the other hand, by Lemma \ref{lemscholz}$(b)$, we have that $k_v\cong \F_p$ and $(\Gamma_v,vp)\cong (\frac{1}{p^{\infty}}\Z, 1 )$, both of which are decidable---the latter being an easy application of Robinson-Zakon \cite{Rob}. Finally, we note that Dittmann \cite{Dittmann} has recently provided an example which is discretely valued and whose algebraic part is decidable.
\end{rem}
\begin{rem} \label{claim2t+1}
For future use in Proposition \ref{almostdec}, we record here that Claim 3 gives $t=(\pi_{\alpha\restriction 0}+(p),\pi_{\alpha \restriction 1}+(p),...)$, via the identifications $\widehat{\F_p[\![t]\!]^{1/p^{\infty}}}\cong \varprojlim_{\Phi} \Oo_{K_{\alpha}}/(p)$.
\end{rem}

\bq 
Is there a perfectoid field $(F,w)$ of characteristic $p$ with $1<|Z_F|< \mathfrak{c}$?
\eq 
%$$\mathcal{O}_{K_n}/p^m\cong A_{inf}/(p^m,\xi_n) \twoheadrightarrow A_{inf}/(p^m,\xi) \cong \mathcal{O}_K/p^m$$

% Prove that $Z_F$ is sequentially compact. At least if the Witt vectors range over a compact subring of Oo_F, this is the case. A compact (hausdorff!) space is uncountable.
\section{Relative decidability for perfectoid fields} \label{reldecsec}

\subsection{Introduction}
We shall now use the results of \S \ref{localapprox} to prove Theorem \ref{reldec} and Corollary \ref{mainCor}.
\subsubsection{Work of Rideau-Scanlon-Simon} \label{rideauscanlon}
It is my understanding that there is ongoing work by Rideau-Scanlon-Simon, which aims at giving a model-theoretic account of many of the concepts and facts that were discussed in \S \ref{localapprox}, in the context of \textit{continuous} logic. In particular, they obtain a bi-interpretability result between $\Oo_K$ and $\Oo_{K^{\flat}}$ in the sense of continous logic. It should be noted that their bi-interpretability result was conceived prior to the present paper and in fact influenced the material that is presented here.
\subsubsection{Plan of action}
Without any adjustments, the interpretation of Rideau-Scanlon-Simon does not quite yield an interpretation in the sense of ordinary first-order logic. The problem is that when one converts statements about $\Oo_K$ to statements about $\Oo_{K^{\flat}}$ via $\Oo_K\cong W(\Oo_{K^{\flat}})/(\xi)$, one ends up with infinitely many variables, coming from the Witt vector coordinates. This problem disappears by interpreting one residue ring $\Oo_K/(p^n)$ at a time, via $\Oo_K/(p^n)\cong W_n(\Oo_{K^{\flat}})/(\xi \mod (p^n))$. This approach is facilitated by Corollary \ref{vddcor}. In \S \ref{uniformitydecidab}, we emphasized that \textit{uniform} decidability of the residue rings is key. As we will see, this eventually comes down to the computability of the distinguished element $\xi$ itself.
\subsection{Computable untilts} \label{compunt}
\subsubsection{Computable Witt vectors}
In analogy with a computable real number, a computable $p$-adic integer is one for which there is an algorithm which outputs the sequence of its $p$-adic digits. More precisely, a $p$-adic integer $a \in \Z_p$ of the form $a=\sum_{n=0}^{\infty} [\alpha_n]\cdot p^n$ is said to be \textit{computable} precisely when the function $\N\to \Z/p\Z:n\mapsto \alpha_n$ is recursive. The notion of a computable $p$-adic integer extends naturally to the more general concept of a \textit{computable Witt vector}.

First recall that a \textit{computable} ring $R_0$ is one whose underlying set is (or is identified with) a \textit{recursive} subset of $\N$, via which the operations $+:R_0\times R_0\to R_0$ and $\cdot :R_0\times R_0\to R_0$ are identified with \textit{recursive} functions.

%We focus on elements $x\in W(R)$ with Witt vector expansion of the form $x=\sum_{n=0}^{\infty} [x_n^{p^n}]p^n$, with $x_n\in R_0$.

\begin{definition} \label{compwittvector}
Fix a perfect ring $R$ and let $R_0\subseteq R$ a computable subring. Consider the ring $W(R)$, the ring of Witt vectors over $R$. An element $\xi\in W(R)$ with Witt vector coordinates $(\xi_0,\xi_1,...)\in R^{\omega}$ is said to be $R_0$-computable if:
\begin{enumerate}
\item For each $n\in \N$, we have that $\xi_n\in R_0$.

\item The function $\N\to R_0:n\mapsto \xi_n$ is recursive.
\end{enumerate}

\end{definition} 
%let $L$ be any countable language with $L\supseteq L_r$. Let $\text{Form}_L$ be the set of $L$-formulas.  We focus on elements $x\in W(R)$ with Witt vector expansion of the form $x=\sum_{n=0}^{\infty} [x_n^{p^n}]p^n$, with each $x_n\in R$ being definable in $L$. 

%\begin{definition}
%Suppose $x \in W(R)$ has a Witt vector expansion of the form $x=\sum_{n=0}^{\infty} [x_n^{p^n}]p^n$. We say that:\\
%$(a)$ The element $x$ is an $L$-pro-definable if each $x_n$ is definable in $R$ in the language $L$.\\
%$(b)$ The element $x$ is quantifier-free $L$-pro-definable (resp. $\exists$-$L$-pro-definable) Witt vector if each $x_n$ is quantifier-free definable (resp. $\exists$-definable) in $R$ in the language $L$.
%\end{definition}

%\begin{definition}
%Let $x\in W(R)$ be of the form $x=\sum_{n=0}^{\infty} [x_n^{p^n}]p^n$ with $x_n\in R$ definable by an $L$-formula $\phi_n(x)$, for $n\in \N$. We say that $x$ is a computable Witt vector in the language $L$ if the function $\N\to \text{Form}_L:n\mapsto \phi_n(x)$ is recursive.
%\end{definition}

\begin{example}
$(a)$ Let $R_0=R=\F_p$. Then the computable elements of $\Z_p=W(\F_p)$ are the usual computable $p$-adic integers.\\
$(b)$ A \textit{non}-example: Let $R$ be any perfect ring, $R_0\subseteq R$ any computable subring and $S\subseteq \N$ be a non-recursive set. The element $\xi= \sum_{n\in S} p^n\in W(R)$ is not computable.
\end{example}

\subsubsection{Computable untilts}
Let $(F,w)$ be a perfectoid field of characteristic $p$ and $\textbf{A}_{\text{inf}}=W(\Oo_F)$. Let also $R_0\subseteq \Oo_F$ be a computable subring.
%Let $(F,w)$ be a perfectoid field of characteristic $p$ and $\textbf{A}_{\text{inf}}=W(\Oo_F)$. Suppose $F$ admits an $L$-structure where $L\supseteq L_{\text{val}}$ is such that $G^F(\Oo_F^n)\subseteq \Oo_F$ for every $n$-ary function symbol $G\in L$ and $c^F\in \Oo_F$ for every constant symbol $c\in L$. Under these assumptions, we may also consider $\Oo_F$ as an $L$-substructure of $F$. 
%\begin{rem}
%The reader is welcome to assume that $F=\widehat{\F_p(\!(t)\!)^{1/p^{\infty}}}$ or $\widehat{\overline{\F}_p(\!(t)\!)^{1/p^{\infty}}} $ and $L=L_t$, the language $L_{\text{val}}$ together with a constant symbol for $t$. This case is enough for the applications presented here, i.e. Corollaries \ref{mainCor} and \ref{example}.
%\end{rem}
\begin{rem}
The reader is welcome to assume that $F=\widehat{\F_p(\!(t)\!)^{1/p^{\infty}}}$ or $\widehat{\overline{\F}_p(\!(t)\!)^{1/p^{\infty}}} $ and $R_0=\F_p[t]$. This case is enough for the applications presented here, i.e. Corollaries \ref{mainCor} and \ref{example}.
\end{rem}

We shall now define what it means for an untilt $K$ of $F$ to be $R_0$-computable:
%\begin{definition} \label{defperf}
%Let $(K,v)$ be an untilt of $(F,w)$ and $L\supseteq L_{\text{val}}$ be as above. We say that $(K,v)$ is a \textit{computable} untilt of $(F,w)$ in the language $L$ if there is a distinguished element $\xi_K\in \textbf{A}_{\text{inf}}$ computable in $L$ with $\mathcal{O}_K\cong \textbf{A}_{\text{inf}}/(\xi_K)$.
%\end{definition}

\begin{definition} \label{defperf}
Let $(K,\iota)$ be an untilt of $(F,w)$ and $R_0\subseteq \Oo_F$ a computable subring. We say that $(K,v)$ is an $R_0$-\textit{computable} untilt of $(F,w)$ if there is an $R_0$-computable distinguished element $\xi_K\in \textbf{A}_{\text{inf}}$ with $\mathcal{O}_K\cong \textbf{A}_{\text{inf}}/(\xi_K)$.
\end{definition}

\begin{example} \label{compuntiltsexample}
In Corollary \ref{compuntilt} we will see that $\widehat{\Q_p(p^{1/p^{\infty}})}$ and $\widehat{\Q_p(\zeta_{p^{\infty}})}$ are $\F_p[t]$-computable untilts of $\widehat{ \F_p(\!(t)\!)^{1/p^{\infty}}}$. Also, that $\widehat{\Q_p^{ab}}$ is an $\F_p[t]$-computable untilt of $\widehat{\overline{ \F}_p(\!(t)\!)^{1/p^{\infty}}}$.
\end{example} 
\subsection{Relative decidability} \label{mainthmsec}
Let $(K,v)$ be an untilt of $(F,w)$ and $\xi_K \in \textbf{A}_{\text{inf}}$ be such that $\Oo_K=\textbf{A}_{\text{inf}}/(\xi_K)$ (Theorem \ref{Font}). We write $\overline{\xi}_{K,n} $ for the image of $\xi_K$ in $W_n(\Oo_F)$ via $\textbf{A}_{\text{inf}}\twoheadrightarrow \textbf{A}_{\text{inf}}/(p^n)\cong W_n(\Oo_F)$.
%\subsubsection{Motivation}
%We introduced Definition \ref{defperf}$(c)$ in order to capture that $(K,v)$ is obtained "computably" from $(F,w)$. In particular, if $(F,w)$ is decidable, we should expect that a computable untilt $(K,v)$ is also decidable. 
\subsubsection{Interpretability}
All the necessary background material related to interpretability is presented in \S \ref{intersec}. 
\bl \label{lemainter}
%Suppose $(K,v)$ is an untilt of $(F,w)$ and that $\xi_K \in \textbf{A}_{\text{inf}}$ is such that $\Oo_K=\textbf{A}_{\text{inf}}/(\xi_K)$. 
For each $n\in \N$, there is a $\exists^+$-interpretation $\text{A}_n$ of the local ring $(\Oo_K/(p^n),\mathfrak{m}_n)$ in the $\langle L_{\text{rings}}, L_{\text{lcr}} \rangle$-structure $((W_n(\Oo_F), \Oo_F), \overline{\xi}_{K,n})$ such that the sequence of interpretations $(\text{A}_n)_{n\in \omega}$ is uniformly recursive.
\el 
\begin{proof}
Fix $n\in \N$. We have that $\Oo_K/(p^n)\cong \textbf{A}_{\text{inf}}/(p^n,\xi_K) \cong W_n(\mathcal{O}_{F})/(\overline{\xi}_{K,n})$. By Proposition \ref{defnofvaluntilt}, we get that $\mathfrak{m}_n=( \{\theta_n([x]):x\in \mathfrak{m}_F \})$, where $\theta_n$ is the composite map $\textbf{A}_{\text{inf}}\stackrel{\theta} \twoheadrightarrow \textbf{A}_{\text{inf}}/(\xi_K) \stackrel{\mod p^n}\twoheadrightarrow W_n(\Oo_F)/(\overline{\xi}_{K,n}) \xrightarrow{\cong} \Oo_K/(p^n)$. We take $\partial_{\text{A}_n}(x)$ to be $x\in \textbf{W}$. Let $c$ be a constant symbol with $c^{(W_n(\Oo_F),\Oo_F)}=\overline{\xi}_{K,n}$. The reduction map $\text{Form}_{L_{\text{lcr}}}\to \langle L_{\text{rings}}, L_{\text{lcr}} \rangle\cup \{c\}: \phi \mapsto \phi_{\text{A}_n}$ on unnested atomic formulas is described as follows:
\begin{enumerate}
\item If $\phi(x,y)$ is $x=y$, then we take $\phi_{\text{A}_n}$ to be the formula $x,y \in \textbf{W} \land \exists z\in \textbf{W}(x=y+z\cdot c)$. As usual, the conjunct $x,y\in \textbf{W}$ is an informal way of saying that the variables $x,y$ are taken from the sort $\textbf{W}$. The cases of $x=0$ and $x=1$ are similar.

\item If $\phi(x,y,z)$ is $x\diamond y=z$, then we take $\phi_{\text{A}_n}$ to be the formula $x,y,z\in \textbf{W} \land \exists w \in \textbf{W} (x\diamond y=z+w\cdot c )$, where $\diamond$ is either $\cdot$ or $+$.

\item If $\phi(x)$ is $x\in \mathfrak{m}$, then we take $\phi_{\text{A}_n}(x)$ to be the formula $x \in \textbf{W} \land \exists z,w\in \textbf{W}, y\in \textbf{R}  (  y \in \mathfrak{m} \land x=[y]\cdot z+w\cdot c)$. 
\end{enumerate}
The coordinate map $f_{\text{A}_n}:W_n(\Oo_F) \twoheadrightarrow \Oo_K/(p^n)$ is the one induced by the map $\theta_n$ above. The above data define a recursive $\exists^+$-interpretation $\text{A}_n$ of $(\Oo_K/(p^n),\mathfrak{m}_n)$ in $((W_n(\Oo_F),\Oo_F),\overline{\xi}_{K,n})$. The sequence of interpretations $(\text{A}_n)_{n\in \omega}$ is uniform in $n$ and thus also (trivially) uniformly recursive. 
\end{proof}

\bp \label{lemthma}
Suppose $(K,v)$ is an untilt of $(F,w)$ and that $\xi_K\in \textbf{A}_{\text{inf}}$ is such that $\Oo_K=\textbf{A}_{\text{inf}}/(\xi_K)$. Let $(\xi_0,\xi_1,...)\in \Oo_F^{\omega}$ be the Witt vector coordinates of $\xi_K$. Then:\\
$(a)$ The value group $(\Gamma_v,vp)$ is recursively interpretable in the valued field $((F,w),\xi_0)$.\\
$(b)$ For each $n\in \N$, there exists a $\exists^+$-interpretation $\text{B}_n$ of the local ring $(\Oo_K/(p^n),\mathfrak{m}_n)$ in the local ring $((\Oo_F,\mathfrak{m}_F),\xi_0,...,\xi_{n-1})$ such that the sequence of interpretations $(B_n)_{n\in \omega}$ is uniformly recursive.
\ep  
\begin{proof}
$(a)$ By Lemma \ref{lemscholz}$(b)$, we will have that $(\Gamma_v,vp)\cong (\Gamma_w,w \xi_0)$ and the value group $(\Gamma_w,w \xi_0)$ is clearly recursively interpretable in the $L_{\text{val}}\cup \{c\}$-structure $(F,w)$ with $c^{(F,w)}=\xi_0$.\\
$(b)$ Lemma \ref{lemainter} provides us with a $\exists^+$-interpretation $\text{A}_n$ of the local ring $(\Oo_K/(p^n),\mathfrak{m}_n)$ in the $\langle L_{\text{rings}}, L_{\text{lcr}} \rangle \cup \{c\}$-structure $((W_n(\Oo_F), \Oo_F), \overline{\xi}_{K,n})$ such that the sequence of interpretations $(\text{A}_n)_{n\in \omega}$ is uniformly recursive. Lemma \ref{trunclemma} provides us with a quantifier-free interpretation $\Gamma_n$ of $((W_n(\Oo_F),\Oo_F),\overline{\xi}_{K,n})$ in the local ring $((\Oo_F,\mathfrak{m}_F),\xi_0,...,\xi_{n-1})$, such that the sequence of interpretations $(\Gamma_n)_{n\in \omega}$ is uniformly recursive. Let $\text{B}_n$ be the composite interpretation of $(\Oo_K/(p^n),\mathfrak{m}_n)$ in $((\Oo_F,\mathfrak{m}_F),\xi_0,...,\xi_{n-1})$. This is also a $\exists^+$-interpretation by Lemma \ref{transcomplexity} and the sequence of interpretations $(\text{B}_n)_{n\in \omega}$ is uniformly recursive by Lemma \ref{transitivity}.
\end{proof}
\subsubsection{Proof of Theorem \ref{reldec2}}
Given a computable subring $R_0\subseteq \Oo_F$, we write $L_{\text{val}}(R_0)$ for the language $L_{\text{val}}$ enriched with a constant $c_{a}$ for each $a\in R_0$ (see notation). The valued field $(F,w)$ can then be updated to an $L_{\text{val}}(R_0)$-structure with $c_a^{(F,w)}=a$.
%\begin{Theor} \label{reldec2}
%Let $(F,w)$ be a perfectoid field of characteristic $p$. Suppose $(K,v)$ is a computable untilt of $(F,w)$ in some language $L\supseteq L_{\text{val}}$.\\
%$(a)$ If $(F,w)$ is decidable in $L$, then $(K,v)$ is decidable in $L_{\text{val}}$.\\
%$(b)$ Let $\xi_K=\sum_{n\geq 0} [c_n^{p^n}]\cdot p^n\in \textbf{A}_{\text{inf}}$ be such that $\Oo_K=\textbf{A}_{\text{inf}}/(\xi_K)$ and assume furthermore that each $c_n$ is $\exists$-definable in the language $L$. If $(F,w)$ is $\exists$-decidable in $L$, then $(K,v)$ is $\exists$-decidable in $L_{\text{val}}$.
%\end{Theor}
\begin{Theor} \label{reldec2}
Let $(F,w)$ be a perfectoid field of characteristic $p$. Suppose $R_0\subseteq \Oo_F$ is a computable subring and $(K,v)$ is an $R_0$-computable untilt of $(F,w)$.\\
$(a)$ If $(F,w)$ is decidable in $L_{\text{val}}(R_0)$, then $(K,v)$ is decidable in $L_{\text{val}}$.\\
$(b)$ If $(F,w)$ is $\exists$-decidable in $L_{\text{val}}(R_0)$, then $(K,v)$ is $\exists$-decidable in $L_{\text{val}}$.
\end{Theor}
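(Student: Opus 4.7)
The strategy is to reduce decidability of $(K,v)$ to arithmetic on $(F,w)$ via the residue rings $\Oo_K/(p^n)$, using the Ax-Kochen-Ershov style results of \S\ref{sec1}. Concretely, Corollary \ref{vddcor} and Corollary \ref{vddcor2} tell us that decidability (resp. $\exists$-decidability) of $(K,v)$ in $L_{\text{val}}$ follows from uniform decidability of $(\Oo_K/(p^n))_{n\in\omega}$ in $L_r$ together with decidability of $(vK,vp)$ in $L_{\text{oag}}$ (resp. uniform $\exists^+$-decidability of $((\Oo_K/(p^n),\mathfrak{m}_n))_{n\in\omega}$ in $L_{\text{lcr}}$). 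So it suffices to verify these two conditions starting from the hypothesis on $(F,w)$.

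For the value group, Lemma \ref{lemthma}(a) provides an interpretation of $(vK,vp)$ in $((F,w),\xi_0)$. Since $K$ is an $R_0$-computable untilt, the zeroth Witt coordinate $\xi_0$ of the distinguished generator $\xi_K$ lies in $R_0$, so it is nameable by a constant in $L_{\text{val}}(R_0)$. The assumed decidability of $(F,w)$ in $L_{\text{val}}(R_0)$ therefore transfers to $(vK,vp)$ in $L_{\text{oag}}$ with a constant for $vp$.

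For the residue rings, Lemma \ref{lemthma}(b) provides a uniformly recursive sequence of $\exists^+$-interpretations $\Gamma_n$ of $(\Oo_K/(p^n),\mathfrak{m}_n)$ inside the parametrized local ring $((\Oo_F,\mathfrak{m}_F),\xi_0,\ldots,\xi_{n-1})$. By $R_0$-computability of $\xi_K$, the map $n \mapsto (\xi_0,\ldots,\xi_{n-1})$ is recursive with values in $R_0$, so these parameters are uniformly nameable by constants in $L_{\text{val}}(R_0)$. Interpreting the $L_{\text{lcr}}$-structure $(\Oo_F,\mathfrak{m}_F)$ inside the $L_{\text{val}}$-structure $(F,w)$ via $\Oo_F=\Oo$ and the $\exists$-definition
\[
\mathfrak{m}(x)\iff x=0 \ \lor\ \exists y\,(xy=1 \land \lnot \Oo(y)),
\]
and composing with $\Gamma_n$, yields a uniformly recursive family of interpretations of $(\Oo_K/(p^n),\mathfrak{m}_n)$ in $(F,w)$ as an $L_{\text{val}}(R_0)$-structure. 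For part (a), full decidability of $(F,w)$ and Proposition \ref{uniformdecprop} give uniform decidability of $((\Oo_K/(p^n),\mathfrak{m}_n))_{n\in\omega}$, a fortiori of $(\Oo_K/(p^n))_{n\in\omega}$ in $L_r$, and Corollary \ref{vddcor} concludes. For part (b), the crucial point is that feeding an $\exists^+$-$L_{\text{lcr}}$-sentence through the $\exists^+$-interpretation $\Gamma_n$ and then through the above $\exists$-definition of $\mathfrak{m}_F$ yields an $\exists$-$L_{\text{val}}(R_0)$-sentence (the only negations, introduced when unwrapping $\mathfrak{m}$, end up inside the quantifier-free matrix), so the assumed $\exists$-decidability of $(F,w)$ delivers uniform $\exists^+$-decidability of $((\Oo_K/(p^n),\mathfrak{m}_n))_{n\in\omega}$, and Corollary \ref{vddcor2} finishes.

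The main point demanding care is uniformity bookkeeping: one must verify that $\Gamma_n$, together with the recursive extraction of the parameters $\xi_0,\ldots,\xi_{n-1}$ from the $R_0$-computability data, assemble into a single uniformly recursive family of interpretations landing in $(F,w)$; this is where the hypothesis that $\xi_K$ is $R_0$-computable (rather than merely that each of its coordinates lies in a computable subring) is used in an essential way. All the other ingredients are already assembled in \S\ref{sec1} and \S\ref{localapprox}.
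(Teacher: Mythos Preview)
Your proof is correct and follows essentially the same route as the paper: interpret $(vK,vp)$ and the local rings $(\Oo_K/(p^n),\mathfrak{m}_n)$ in $(F,w)$ with parameters $\xi_0,\dots,\xi_{n-1}\in R_0$ via Proposition~\ref{lemthma}, use $R_0$-computability of $\xi_K$ to make this a uniformly recursive family in $L_{\text{val}}(R_0)$, apply Proposition~\ref{uniformdecprop}, and conclude with Corollaries~\ref{vddcor} and~\ref{vddcor2}. Your handling of part~(b) is in fact slightly more careful than the paper's sketch, since you track explicitly that the only negation introduced (from unwrapping $\mathfrak{m}_F$ as $\exists y(xy=1\land y\notin\Oo)$) stays in the quantifier-free matrix, so an $\exists^+$-$L_{\text{lcr}}$-sentence lands in the $\exists$-fragment of $L_{\text{val}}(R_0)$.
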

\begin{proof}
$(a)$ By Proposition \ref{lemthma}$(a)$, we get that $(\Gamma_v,vp)$ is decidable in $L_{\text{oag}}$ with a constant for $vp$. By Proposition \ref{lemthma}$(b)$, for each $n\in \N$, there exists an interpretation $\text{B}_n$ of the $L_{\text{lcr}}$-structure $(\Oo_K/(p^n),\mathfrak{m}_n)$ in the $L_{\text{lcr}}\cup \{c_0,...,c_{n-1}\}$-structure $((\Oo_F,\mathfrak{m}_F),\xi_0,...,\xi_{n-1})$ with $c_m^{(\Oo_F,\mathfrak{m}_F)}=\xi_m$ for $m=0,...,n-1$. Moreover, the sequence of interpretations $(\text{B}_n)_{n\in \omega}$ is uniformly recursive.  \\
\textbf{Claim:} There exists an interpretation $\Delta_n$ of the $L_{\text{lcr}}\cup \{c_0,...,c_{n-1}\}$-structure $((\Oo_F,\mathfrak{m}_F),\xi_0,...,\xi_{n-1})$ in the $L_{\text{val}}(R_0)$-structure $(F,w)$ such that the sequence of interpretations $(\Delta_n)_{n\in \omega}$ is uniformly recursive.
\begin{proof}
Fix $n\in \N$. We take $\partial_{\Delta_n}(x)$ to be the formula $x\in \Oo$. %If we let $\phi_n(x)\in \text{Form}_L$ be the defining formula for $c_n$, then 
The reduction map on unnested atomic formulas is described as follows:
\begin{enumerate}
\item If $\phi(x)$ is the formula $x=c_m$, for some $m=0,...,n-1$, we take $\phi_{\Delta_n}(x)$ to be the formula $x=c_{\xi_m}$. The formulas $x=y,x=0$ and $x=1$ are interpreted in the obvious way.
\item If $\phi(x)$ is the formula $x\in \mathfrak{m}$, we take $\phi_{\Delta_n}(x)$ to be the formula $\exists y (xy=1\land y\notin \Oo) $. 
\item If $\phi(x,y,z)$ is $x\diamond y=z$, then we take $\phi_{\Delta_n}$ to be the formula $x,y,z\in \Oo \land \phi(x,y,z) $, where $\diamond$ is either $\cdot$ or $+$.
\end{enumerate}
The coordinate map $f_{\Delta_n}: \partial_{\Delta_n}(F)\to \Oo_F$ is the identity. Since $\N\to R_0:m \mapsto \xi_m$ is recursive, the reduction map $\N \times \text{Form}_{L_{\text{lcr}}\cup \{c_0,...,c_{n-1}\}}\to \text{Form}_{L_{\text{val}}(R_0)}:(n,\phi)\mapsto \phi_{\Delta_n}$ restricted to unnested atomic formulas is recursive. By definition, this means that the sequence of interpretations $(\Delta_n)_{n\in \omega}$ is uniformly recursive.
\qedhere $_{\textit{Claim}}$ \end{proof}
For each $n\in \N$, let $E_n$ be the composite interpretation of $\text{B}_n$ and $\Delta_n$. The sequence $(E_n)_{n\in \omega}$ is uniformly recursive by Proposition \ref{transitivity}. By Proposition \ref{uniformdecprop}, we get that the sequence of rings $(\Oo_K/(p^n))_{n\in \omega}$ is \textit{uniformly} decidable in $L_{\text{rings}}$. The conclusion follows from Corollary \ref{vddcor}. \\
$(b)$ For the existential version, one needs to keep track of the complexity of formulas. Since $\text{B}_n$ is a $\exists^+$-interpretation and $\Delta_n$ is an $\exists$-interpretation, we see that the reduction map $L_{\text{lcr}}\to L_{\text{val}}(R_0):\phi\mapsto \phi_{E_n}$ sends $\exists^+$-formulas to $\exists$-formulas (see Remark \ref{existinterrem}). It follows that the sequence of local rings $((\Oo_K/(p^n),\mathfrak{m}_n)_{n\in \omega}$ is uniformly $\exists^+$-decidable in $L_{\text{lcr}}$. We conclude by Corollary \ref{vddcor2}. 
\end{proof}
The assumption of completeness in Theorem \ref{reldec2} can be easily relaxed to henselianity:
\bc \label{reldecgen}
Let $(F,w)$ be a perfect non-trivially valued field with a rank $1$ value group and $R_0\subseteq \Oo_F$ be a computable subring. Suppose $(K,v)$ is a mixed characteristic henselian valued field such that $(\widehat{K},\widehat{v})$ is an $R_0$-computable untilt of $(\widehat{F},\widehat{w})$. Then:\\
$(a)$ If $(F,w)$ is decidable in $L_{\text{val}}(R_0)$, then $(K,v)$ is decidable in $L_{\text{val}}$.\\
$(b)$ If $(F,w)$ is $\exists$-decidable in $L_{\text{val}}(R_0)$, then $(K,v)$ is $\exists$-decidable in $L_{\text{val}}$. 
\ec
\begin{proof}
Suppose $\widehat{\mathcal{O}}_{K}=W(\widehat{\mathcal{O}}_{F})/(\xi)$, where $\xi=[\pi]-up$, with $\pi\in \mathfrak{m}_F \cap R_0$ and $u\in W(\mathcal{O}_{F})^{\times}$. Note that $W(\widehat{\mathcal{O}}_{F})/([\pi]-up) \cong W(\mathcal{O}_F)/([\pi]-up)$; indeed, $W(\mathcal{O}_F)$ is $p$-adically complete and $[\pi]$ and $p$ are associates in the quotient $W(\mathcal{O}_F)/([\pi]-up)$. For each $n\in \N$, one has that
$$\Oo_K/(p^n) \cong \widehat{\mathcal{O}}_{K}/(p^n) \cong W_n(\widehat{\mathcal{O}}_{F})/([\pi]-up) \cong W_n(\mathcal{O}_F)/([\pi]-up)$$
We now proceed as in the proof of Theorem \ref{reldec}.
\end{proof}
\subsubsection{Remarks on Theorem \ref{reldec2}}
Note that decidability of $K$ in $L_{\text{val}}$ relative to its tilt $K^{\flat}$ in $L_{\text{val}}$, i.e. without taking parameters into account, is false. Essentially, the point is that $p$ is named in $L_{\text{val}}$, while $t$ is not: 
\begin{example} \label{lang}
Let $(\Gamma,+,<)\subseteq (\R,+,<)$ be any $p$-divisible ordered abelian group, which is decidable in $L_{\text{oag}}$ but undecidable in $L_{\text{oag}}\cup \{1\}$, where $1$ a distinguished element of $\Gamma$. For example, let $S\subsetneq \mathbb{P}$ be a non-recursive set of primes and $S':=\mathbb{P}-S$ be its complement. Denote by $\frac{1}{S}\Z$ the group generated by $\{\frac{1}{s}:s\in S\}$ and consider the ordered abelian group
$$\Gamma=\frac{1}{p^{\infty}}(\frac{1}{S}\Z \oplus \frac{1}{S'}\Z \sqrt{2})$$
equipped with the order induced from the natural embedding $\Gamma \hookrightarrow \R$. This is a regularly dense group, in the sense of Robinson-Zakon \cite{Rob}, with prime invariants $[\Gamma:p\Gamma]=1$ and $[\Gamma:q\Gamma]=q$ for $q\neq p$. By Robinson-Zakon \cite{Rob}, this group is decidable in $L_{\text{oag}}$ but is clearly undecidable in $L_{\text{oag}}\cup \{1\}$. Since $\Gamma$ is $p$-divisible, we may form the tame valued field $(F,w)=(\mathbb{F}_p(\!(t^{\Gamma})\!),v_t)$, which is decidable in $L_{\text{val}}$ by Theorem 1.4 \cite{Kuhl}. However, the untilt $(K,v)$, whose associated Witt vector is $[t]-p$, is undecidable in $L_{\text{val}}$. Indeed, even $(\Gamma_v,vp)$ is undecidable in $L_{\text{oag}}$ with a constant for $vp$.
\end{example}

%\begin{rem} \label{naive}
%Set $(F,w)=(\F_p(\!(t^{\Gamma})\!),v_t)$ for the Hahn field with value group $\Gamma=\frac{1}{p^{\infty}}\Z$ and residue field $\F_p$. By Proposition \ref{scholzprop}$(c)$, we have that $|Z_F|=\mathfrak{c}$. Since there are countably many Turing machines, there must exist a valued field $(K,v)$ with $(K^{\flat},v^{\flat})\cong (\F_p(\!(t^{\Gamma})\!),v_t)$ and $(K,v)$ undecidable in $L_{\text{val}}$. On the other hand, the valued field $(\F_p(\!(t^{\Gamma})\!),v_t)$ is decidable in $L_{\text{val}}$ by Theorem 1.6 \cite{Kuhl} and even decidable in $L_{\text{val}}(t)$ by Lisinski \cite{Lis}.
%\end{rem}
%\begin{rem} [see Proposition \ref{almostdec}]
%More strikingly, for a generic $\alpha\in 2^{\omega}$, the untilt $L_{\alpha}$ of $\F_p(\!(t^{\Gamma})\!)$ constructed in the proof of Proposition \ref{scholzprop}$(c)$, is undecidable and yet each individual residue ring $\Oo_{L_{\alpha}}/(p^n)$ is decidable.
%\end{rem}
\begin{rem} \label{generalthma}
Theorem \ref{reldec} holds also for untilts $K$ of $F$, which have an associated distinguished element $\xi_K$ satisfying a more relaxed condition than the one of Definition \ref{compwittvector}. Namely, suppose that $\xi_K=(\xi_0,\xi_1,...)\in \Oo_F^{\omega}$ and each $\xi_n$ is definable in the valued field $(F,w)$ by a formula $\phi_n(x) \in L_{\text{val}}(R_0)$ with parameters from $R_0$. Suppose furthermore that the function $\N\to \text{Form}_{L_{\text{val}}(R_0)}:n\mapsto \phi_n(x)$ is recursive. Then the conclusion of Theorem \ref{reldec} is still valid, i.e. $(K,v)$ is decidable in $L_{\text{val}}$ relative to $(F,w)$ in $L_{\text{val}}(R_0)$. 
%The existential version also goes through with the caveat that each $\phi_n(x)$ must be an \textit{existential} formula. 
\end{rem}
It is clear that when each $\phi_n(x)$ is a quantifier-free formula, the configuration described in Remark \ref{generalthma} specializes to the notion of a computable untilt. We have no particular application in mind that requires the level of generality described in Remark \ref{generalthma}, which is one reason we have restricted ourselves to the quantifier-free case (the other being clarity of exposition).
\subsection{Corollary \ref{mainsCor}} \label{explicitcomp}
In order to prove Corollary \ref{mainCor}, we need to calculate the tilts of our fields of interest and compute the associated distinguished elements.
\subsubsection{Computing the distinguished elements}
All computations below are well-known to experts (see e.g., Example 2.22 \cite{FF2}). For lack of a detailed reference, we shall spell out the details.
\bl \label{resringab}
There exist ring isomorphisms:\\
$(a)$ $\Z_p[p^{1/p^{\infty}}]/(p) \cong \mathbb{F}_p[t^{1/p^{\infty}}]/(t)$ mapping $p^{1/p^n}+(p)\mapsto t^{1/p^n}+(t)$.\\
$(b)$ $\Z_p[\zeta_{p^{\infty}}]/(p)\cong \mathbb{F}_p[t^{1/p^{\infty}}]/(t^{p-1})$ mapping $\zeta_p+(p)\mapsto t+1+(t^{p-1})$. \\
$(c)$ $\Z_p^{ab}/(p)\cong \overline {\F}_p[t^{1/p^{\infty}}]/(t^{p-1})$ mapping $\zeta_p+(p) \mapsto t+1+(t^{p-1})$.
\el
\begin{proof}
$(a)$ For each $n\in \N$, the irreducible polynomial of $p^{1/p^{n+1}}$ over $\Q_p(p^{1/p^n})$ is the Eisenstein polynomial $x^p-p^{1/p^n}\in \Z_p[p^{1/p^n}][x]$. We therefore compute
$$ \Z_p[p^{1/p^{\infty}}]/(p)\cong \Z_p[x_1,x_2,...]/(p,x_1^p-p,x_2^p-x_1,...)$$ $$\cong \mathbb{F}_p[x_1,x_2,...]/(x_1^p,x_2^p-x_1,...) \stackrel{x_{n}\mapsto t^{1/p^n}}{\cong} \mathbb{F}_p[t^{1/p^{\infty}}]/(t)$$
$(b)$  The irreducible polynomial of $\zeta_p$ over $\Q_p$ is the cyclotomic polynomial $\Phi_p(x)=x^{p-1}+...+1$. Moreover, given $n>1$, the irreducible polynomial of $\zeta_{p^n}$ over $\Q_p(\zeta_{p^{n-1}})$ is $x^p-\zeta_{p^{n-1}}\in \Z[\zeta_{p^n}][x]$. We now proceed as in $(a)$ and compute
$$\Z_p[\zeta_{p^{\infty}}]/(p) \cong \Z_p[x_1,x_2,...]/(p,\Phi_p(x_1),x_2^p-x_1,...) \stackrel{x_{n+1}\mapsto x^{1/p^n}}\cong \F_p[x^{1/p^{\infty}}]/(\overline{\Phi}_p(x)) $$
Note that $\overline{\Phi}_p(x)=\frac{x^p-1}{x-1}=\frac{(x-1)^p}{x-1}=(x-1)^{p-1}$ and thus 
$$\Z_p[\zeta_{p^{\infty}}]/(p)\cong \F_p[x^{1/p^{\infty}}]/(x-1)^{p-1} \stackrel{t=x-1}=\F_p[t^{1/p^{\infty}}]/(t^{p-1})  $$
$(c)$ By local Kronecker-Weber (see Theorem 14.2 in \cite{Wash}) and Proposition 17 in \cite{Ser}, we get that $\Z_p^{ab}=\Z_p^{ur}[\zeta_{p^{\infty}}]$. We now proceed as in $(b)$.
\end{proof}
\bc \label{tiltindeed}
$(a)$ $\widehat{\Q_p(p^{1/p^{\infty}})} ^{\flat}\cong \widehat{\F_p(\!(t)\!)^{1/p^{\infty}}}$ and $t^{\sharp}= p$.\\%$p^{\flat}$ can be chosen to be $t$.\\
$(b)$ $ \widehat{\Q_p(\zeta_{p^{\infty}})}^{\flat}\cong \widehat{\F_p(\!(t)\!)^{1/p^{\infty}}}$ and $(t+1)^{\sharp}= \zeta_p$.\\%$p^{\flat}$ can be chosen to be $t^{p-1}$.\\
$(c)$ $\widehat{\Q_p^{ab}}^{\flat} \cong \widehat{\overline{ \F}_p(\!(t)\!)^{1/p^{\infty}}}$ and $(t+1)^{\sharp}= \zeta_p$.
\ec 
\begin{proof}
$(a)$ %First observe that $ \widehat{\F_p[\![t]\!]^{1/p^{\infty}}}/(t)\cong  \F_p[t^{1/p^{\infty}}]/(t) $. We now have that $\varprojlim_{\Phi} \widehat{\F_p[\![t]\!]^{1/p^{\infty}}}/(t) \cong \varprojlim_{\Phi} \widehat{\F_p[\![t]\!]^{1/p^{\infty}}}  \cong  \widehat{\F_p[\![t]\!]^{1/p^{\infty}}}$ (apply Lemma \ref{lemscholz}$(a)$ for $\varpi=t$ and $0$). 
By Lemma \ref{resringab}, we have that $\Z_p[p^{1/p^{\infty}}]/(p) \cong \mathbb{F}_p[t^{1/p^{\infty}}]/(t)$ via an isomorphism mapping $p^{1/p^n}+(p)\mapsto t^{1/p^n}+(t)$. One can verify directly that  $\widehat{\F_p[\![t]\!]^{1/p^{\infty}}}\to \varprojlim_{\Phi} \mathbb{F}_p[t^{1/p^{\infty}}]/(t) :x\mapsto (x \mod (t), x^{1/p} \mod (t),...) $ is a ring isomorphism. It follows that $\widehat{\Q_p(p^{1/p^{\infty}})} ^{\flat}\cong \widehat{\F_p(\!(t)\!)^{1/p^{\infty}}}$ and by definition $t^{\sharp}=\lim_{n\to \infty} (p^{1/p^n})^{p^n}=p$. The proofs of $(b)$ and $(c)$ are similar.
%By Lemma \ref{resringab}, it suffices to show that $\varprojlim_{\Phi} \mathbb{F}_p[t^{1/p^{\infty}}]/(t) \cong \F_p[\![t]\!]^{1/p^{\infty}}$.
\end{proof}

\bp [cf. Example 2.22 \cite{FF2}]\label{expl}
We have the following isomorphisms:\\
$(a)$ $\widehat {\Z_p[p^{1/p^{\infty}}]}\cong W( \widehat{\F_p[\![t]\!]^{1/p^{\infty}}})/([t]-p)$.\\
$(b)$ $\widehat{\Z_p[\zeta_{p^{\infty}}]} \cong W( \widehat{\F_p[\![t]\!]^{1/p^{\infty}}})/([t+1]^{p-1}+[t+1]^{p-2}+...+1)$.\\
$(c)$ $\widehat {\Z_p^{ab}}\cong W(\widehat{\overline{  \F}_p[\![t]\!]^{1/p^{\infty}}})/([t+1]^{p-1}+[t+1]^{p-2}+...+1)$.
\ep 
\begin{proof}
$(a)$ By Corollary \ref{tiltindeed}$(a)$, we have that $\widehat{\Q_p(p^{1/p^{\infty}})} ^{\flat}\cong \widehat{\F_p(\!(t)\!)^{1/p^{\infty}}}$. %By Lemma \ref{resringab} we have a ring homomorphism $$\sharp: \widehat{ \F_p[\![t]\!]^{1/p^{\infty}}}\xrightarrow{\mod (t)} \mathbb{F}_p[t^{1/p^{\infty}}]/(t) \xrightarrow{\cong} \Z_p[p^{1/p^{\infty}}]/(p) $$
%mapping $t^{1/p^n} \mapsto p^{1/p^n}+(p)  $. %The $p$-ring $\widehat {\Z_p[p^{1/p^{\infty}}]}$ is equipped with map of multiplicative representatives $\tau: \Z_p[p^{1/p^{\infty}}]/(p)\to \widehat {\Z_p[p^{1/p^{\infty}}]}: x+(p)\mapsto \lim_{n\to \infty} x_n^{p^n}$, where $x_n\in \widehat {\Z_p[p^{1/p^{\infty}}]}$ is any element such that $x_n^{p^n}=x \mod (p)$. 
Consider the ring homomorphism $\theta: \textbf{A}_{\text{inf}} \twoheadrightarrow \widehat {\Z_p[p^{1/p^{\infty}}]}$ inducing $\sharp: \widehat{\F_p[\![t]\!]^{1/p^{\infty}}}\to \widehat {\Z_p[p^{1/p^{\infty}}]}$. Since $t^{\sharp}=p$, we get that $\theta([t])=p$ and therefore $[t]-p\in \text{Ker}(\theta)$. By Proposition \ref{anydisting}, it follows that $\text{Ker}(\theta)=([t]-p)$. \\
$(b)$ By Corollary \ref{tiltindeed}$(b)$, we have that $ \widehat{\Q_p(\zeta_{p^{\infty}})}^{\flat}\cong \widehat{\F_p(\!(t)\!)^{1/p^{\infty}}}$. %By Lemma \ref{resringab} we have a ring homomorphism $$\sharp: \widehat{ \F_p[\![t]\!]^{1/p^{\infty}}}\xrightarrow{\mod (t^{p-1})} \mathbb{F}_p[t^{1/p^{\infty}}]/(t^{p-1}) \xrightarrow{\cong} \Z_p[\zeta_{p^{\infty}}]/(p)$$
mapping $1+t\mapsto \zeta_p$. %Let $\tau: \Z_p[\zeta_{p^{\infty}}]/(p)\to \widehat {\Z_p[\zeta_{p^{\infty}}]}: x+(p)\mapsto \lim_{n\to \infty}x_n^{p^n}$ be defined as in $(a)$. By Proposition \ref{univwitt}, 
Consider the ring homomorphism $\theta: \textbf{A}_{\text{inf}}\twoheadrightarrow \widehat {\Z_p[\zeta_{p^{\infty}}]}$ inducing $\sharp: \widehat{ \F_p[\![t]\!]^{1/p^{\infty}}} \to \widehat {\Z_p[\zeta_{p^{\infty}}]}$. Since $(t+1)^{\sharp}=\zeta_p$, we get that $\theta([t+1])=\zeta_p$ and therefore $[t+1]^{p-1}+....+[t+1]+1\in \text{Ker}(\theta)$. By Proposition \ref{anydisting}, it suffices to show that $\xi=[t+1]^{p-1}+....+[t+1]+1$ is a distinguished element of $\textbf{A}_{\text{inf}}$. Set $\text{res}: \Oo_F \twoheadrightarrow \Oo_F/\mathfrak{m}_F$ for the residue map and $W(\text{res})$ for the unique induced homomorphism $W(\text{res}):W(R)\to W(R/\mathfrak{m}_R)$ of Fact \ref{univwitt}.  We compute that $W(\text{res})(\xi)=1+1+...+1=p$, whence $\xi$ is a distinguished element of $\textbf{A}_{\text{inf}}$ (Remark \ref{twisteddist}).\\ %Moreover, a computation shows that $\xi=\frac{t+1^p-1}{t+1-1}=t^{p-1} \mod p\textbf{A}_{\text{inf}}$ and thus $c_0=t^{p-1}$.\\
$(c)$ Similar to $(b)$. 
\end{proof}

%\begin{rem} \label{primeldeg1}
%Let $\Oo_F$ be $ \F_p[\![t]\!]^{1/p^{\infty}}$ or $\overline{  \F}_p[\![t]\!]^{1/p^{\infty}}$ and $\textbf{A}_{\text{inf}}=W(\Oo_F)$. Although it is not immediately apparent from its form, the element $\xi=[(t+1)]^{p-1}+...+1$ is a distinguished element. 
%\end{rem}

%\begin{rem} \label{indeeduntilt}
%By Theorem \ref{Font} and Proposition \ref{expl}, we verify that $\widehat{\Q_p(p^{1/p^{\infty}})}\cong  \widehat{\Q_p(\zeta_{p^{\infty}})}\cong \widehat{\F_p(\!(t)\!)^{1/p^{\infty}}}$ and that $\Q_p^{ab}\cong \widehat{\overline{\F}_p(\!(t)\!)^{1/p^{\infty}}}$, which was stated without proof in Example \ref{perfex}. There is a more direct way of seeing this by computing $\varprojlim_{\Phi} \Oo_K/(p)$ in each of the above cases.
%\end{rem} 

%\bc \label{0top}
%Fix $n\in \N$. Then we have the following isomorphisms:\\
%$(a)$ $\Z_p[p^{1/p^{\infty}}]/(p^n)\cong W_n( \F_p[\![t]\!]^{1/p^{\infty}})/([t]-p)$.\\
%$(b)$ $\Z_p[\zeta_{p^{\infty}}]/(p^n)\cong W_n( \F_p[\![t]\!]^{1/p^{\infty}})/([(t+1)]^{p-1}+[(t+1)]^{p-2}+...+1)$.\\
%$(c)$ $\Z_p^{ab}/(p^n)\cong W_n(\overline{  \F}_p[\![t]\!]^{1/p^{\infty}})/([(t+1)]^{p-1}+[(t+1)]^{p-2}+...+1)$.
%\ec 
%\begin{proof}
%Immediate from Proposition \ref{expl}.
%\end{proof}

\subsubsection{Proof of Corollary \ref{mainCor}}
\bc \label{compuntilt}
We have the following:\\
$(a)$  $\widehat{\Q_p(p^{1/p^{\infty}})}$ and $\widehat{\Q_p(\zeta_{p^{\infty}})}$ are $\F_p[t]$-computable untilts of $\widehat{\F_p(\!(t)\!)^{1/p^{\infty}}}$.\\
$(b)$ $\widehat{\Q_p^{ab}}$ is an $\F_p[t]$-computable untilt of $\widehat{\F_p(\!(t)\!)^{1/p^{\infty}}}$.
\ec
\begin{proof}
The case of $\widehat{\Q_p(p^{1/p^{\infty}})}$ is clear. For the other two cases, note that $\xi:=[t+1]^{p-1}+[t+1]^{p-2}+...+1$ is $\F_p[t]$-computable, using the \textit{computable} polynomials $S_0,...,S_n$ for Witt vector addition (Observation \ref{compobwitt}).
\end{proof}

\begin{Corol} \label{mainCor}
$(a)$ Assume $\F_p(\!(t)\!)^{1/p^{\infty}}$ is (existentially) decidable in $L_{\text{val}}(t)$. Then $\Q_p(p^{1/p^{\infty}})$ and $\Q_p(\zeta_{p^{\infty}})$ are (existentially) decidable in $L_{\text{val}}$.\\
$(b)$ Assume $\overline {\F}_p(\!(t)\!)^{1/p^{\infty}}$ is (existentially) decidable in $L_{\text{val}}(t)$. Then $\Q_p^{ab}$ is (existentially) decidable in $L_{\text{val}}$.
\end{Corol}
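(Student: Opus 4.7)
The plan is to invoke the more general henselian version Corollary \ref{reldecgen} of Theorem \ref{reldec2}, with $(F,w)=(\F_p((t))^{1/p^{\infty}},v_t)$ in part $(a)$, with $(F,w)=(\overline{\F}_p((t))^{1/p^{\infty}},v_t)$ in part $(b)$, and with the computable subring $R_0=\F_p[t]\subseteq \Oo_F$ in both cases. For the mixed characteristic side we take $(K,v)=(\Q_p(p^{1/p^{\infty}}),v_p)$ or $(K,v)=(\Q_p(\zeta_{p^{\infty}}),v_p)$ in $(a)$ and $(K,v)=(\Q_p^{ab},v_p)$ in $(b)$. Each of these $K$ is henselian as an algebraic extension of the henselian field $\Q_p$, and each $F$ is henselian, non-trivially valued, of rank one, with perfect valuation ring, so the hypotheses of Corollary \ref{reldecgen} on $(F,w)$ are satisfied.

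The key verification is that $(\widehat{K},\widehat{v})$ is an $\F_p[t]$-computable untilt of $(\widehat{F},\widehat{w})$. For the tilt identifications this is exactly Corollary \ref{tiltindeed}, which identifies $\widehat{\Q_p(p^{1/p^{\infty}})}^{\flat}$ and $\widehat{\Q_p(\zeta_{p^{\infty}})}^{\flat}$ with $\widehat{\F_p((t))^{1/p^{\infty}}}$, and $\widehat{\Q_p^{ab}}^{\flat}$ with $\widehat{\overline{\F}_p((t))^{1/p^{\infty}}}$. For the computability of the generators of the corresponding kernels, this is exactly Corollary \ref{compuntilt}: the generator $[t]-p$ for $\widehat{\Q_p(p^{1/p^{\infty}})}$ is trivially $\F_p[t]$-computable, and for $\widehat{\Q_p(\zeta_{p^{\infty}})}$ and $\widehat{\Q_p^{ab}}$ the generator $[t+1]^{p-1}+[t+1]^{p-2}+\ldots+1$ is $\F_p[t]$-computable by the recursiveness of the universal Witt addition polynomials (Observation \ref{compobwitt}).

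The last thing to check is the translation between the languages $L_t$ and $L_{\text{val}}(\F_p[t])$: since every element of $\F_p[t]$ is (uniformly) a closed $L_t$-term built from $0,1,+,\cdot$ and the constant $t$, there is a recursive map that takes an $L_{\text{val}}(\F_p[t])$-sentence to a logically equivalent $L_t$-sentence by replacing each constant $c_a$ (for $a\in \F_p[t]$) by its associated closed term. Consequently decidability of $F$ in $L_t$ is equivalent to decidability of $F$ in $L_{\text{val}}(\F_p[t])$; the same holds at the level of the existential fragments. Applying Corollary \ref{reldecgen}$(a)$ and $(b)$ in each of the three instances then yields the full corollary.

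There is no real obstacle beyond bookkeeping: the substantive work has already been done in Theorem \ref{reldec2} and its henselian refinement Corollary \ref{reldecgen}, in the computation of the distinguished elements carried out in Proposition \ref{expl}, and in the verification that these distinguished elements are $\F_p[t]$-computable in Corollary \ref{compuntilt}. The mildest subtlety is simply to note that one applies Corollary \ref{reldecgen} (and not Theorem \ref{reldec2} itself) so that one does not need to replace the perfect hulls $\F_p((t))^{1/p^{\infty}}$ and $\overline{\F}_p((t))^{1/p^{\infty}}$ by their $t$-adic completions on the characteristic $p$ side, nor replace $\Q_p(p^{1/p^{\infty}})$, $\Q_p(\zeta_{p^{\infty}}), \Q_p^{ab}$ by their $p$-adic completions on the mixed characteristic side.
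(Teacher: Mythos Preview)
Your proposal is correct and follows essentially the same route as the paper: the paper's proof simply cites Corollary \ref{compuntilt} and Corollary \ref{reldecgen}, and you have unpacked exactly these two ingredients, together with the (implicit) observation that decidability in $L_t$ and in $L_{\text{val}}(\F_p[t])$ are interchangeable via the obvious recursive translation of constants into closed terms.
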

\begin{proof}
$(a)$ By Corollary \ref{compuntilt} and Corollary \ref{reldecgen}.\\
$(b)$ Similar to $(a)$.
\end{proof}
\section{Applications: Tame fields of mixed characteristic}\label{tamesec}

\subsection{Introduction} 

\subsubsection{Motivation} 
We shall prove Corollary \ref{example}, which to my knowledge is the first decidability result for tame fields of mixed characteristic. 
%Corollary \ref{example} is proved by transposing a decidability result for tame fields in positive characteristic and ultimately using Theorem \ref{reldec2}.   
\subsubsection{Preliminaries}
The algebra and model theory of tame fields was introduced and studied by Kuhlmann in \cite{Kuhl}. Recall the definition:
\begin{definition}
Let $(K,v)$ be a henselian valued field. A finite valued field extension $(K',v')/(K,v)$ is said to be tame if 
\begin{enumerate}
\item  $([\Gamma':\Gamma],p)=1$, where $p$ is the \textit{characteristic exponent} of $k$, i.e. $p=\text{char}(k)$ if this is positive and $p=1$ if $\text{char}(k)=0$.
\item The residue field extension $k'/k$ is separable.
\item The extension $(K',v')/(K,v)$ is defectless, meaning that the fundamental equality $[K':K]=[\Gamma':\Gamma]\cdot [k':k]$  holds.
\end{enumerate}
An algebraic valued field extension is said to be tame if every finite subextension is tame.
\end{definition}
\begin{definition}
A henselian valued field $(K,v)$ is said to be tame if every finite valued field extension $(K',v')/(K,v)$ is tame. 
\end{definition}
In practice, one often needs a more intrinsic description of tame fields. This is provided by the following:
\bp [Theorem 3.2 \cite{Kuhl}] \label{tamechar}
Let $(K,v)$ be henselian valued field. Then the following are equivalent:
\begin{enumerate}
\item $(K,v)$ is a tame field.

\item $(K,v)$ is algebraically maximal, $\Gamma$ is $p$-divisible and $k$ is perfect. 
\end{enumerate}
\ep 
\begin{example} \label{tamexample}
Using Proposition \ref{tamechar}, one can verify the following:\\
$(a)$ Any (algebraically) maximal immediate extension of $\Q_p(p^{1/p^{\infty}})$ or $\Q_p(\zeta_{p^{\infty}})$ is tame.\\
$(b)$ Any (algebraically) maximal immediate extension of $\F_p(\!(t)\!)^{1/p^{\infty}}$ is tame. In particular, the Hahn field $ (\F_p(\!(t^{\Gamma})\!),v_t)$ with value group $\Gamma=\frac{1}{p^{\infty}}\Z$ and residue field $\F_p$ is tame.
\end{example}
\subsubsection{Equal characteristic}
Kuhlmann obtained the following Ax-Kochen/Ershov principle for tame fields of equal characteristic: 
\bt [Theorem 1.4 \cite{Kuhl}] \label{thmtamekuhl}
Let $(K,v)$ and $(K',v')$ be two equal characteristic tame fields. Then $(K,v)\equiv (K',v')$ in $L_{\text{val}}$ if and only if $k\equiv k'$ in $L_{\text{rings}}$ and $\Gamma\equiv \Gamma '$ in $L_{\text{oag}}$.
\et  
Kuhlmann then deduces the following decidability result (among others):
\bc [Theorem 1.6 \cite{Kuhl}] \label{hahndec}
Set $\Gamma=\frac{1}{p^{\infty}}\Z$, i.e. for the $p$-divisible hull of $\Z$. The Hahn field $ (\F_p(\!(t^{\Gamma})\!),v_t)$ is decidable in $L_{\text{val}}$.
\ec 
In recent work, Lisinski combined results of Kuhlmann \cite{Kuhl} with work of Kedlaya \cite{Ked} and obtained the following strengthening of Corollary \ref{hahndec}: 
\bt [Theorem 1 \cite{Lis}] \label{listhm}
Set $\Gamma=\frac{1}{p^{\infty}}\Z$. The Hahn field $ (\F_p(\!(t^{\Gamma})\!),v_t)$ is decidable in $L_{t}$.
\et 
\subsubsection{Mixed characteristic} Kuhlmann's Theorem \ref{thmtamekuhl} fails as such for mixed characteristic tame fields. A counterexample is given in Theorem 1.5$(c)$ \cite{KuhlAns}. The lack of such a principle has been a fundamental obstacle in obtaining decidability results for such fields.

\subsection{Mixed characteristic tame fields}
While we do not know whether $\Q_p(p^{1/p^{\infty}})$ and $\Q_p(\zeta_{p^{\infty}})$ are (existentially) decidable, we will show that they admit decidable maximal immediate extensions. These are tame fields by Example \ref{tamexample}$(a)$. 
%First we give a self-contained proof of the fact that tilting preserves maximality:
\bl [Remark 2.23 \cite{FF2}] \label{tiltmaximality}
Let $(K,v)$ be a perfectoid field. Then $(K,v)$ is maximal if and only if $(K^{\flat},v^{\flat})$ is maximal.
\el 
\begin{proof}
Immediate from the tilting equivalence Theorem \ref{tiltingequiv} and the fact that tilting preserves value groups and residue fields (see Lemma \ref{lemscholz}$(b)$).
\end{proof}

%:
\begin{Corol} \label{example}
The valued field $(\Q_p(p^{1/p^{\infty}}),v_p)$ (resp. $(\Q_p(\zeta_{p^{\infty}}),v_p)$) admits a maximal immediate extensions which is decidable in $L_{\text{val}}$.
\end{Corol}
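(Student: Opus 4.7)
The plan is to combine the explicit descriptions of Proposition~\ref{expl} with Lisinski's Theorem~\ref{listhm} via Theorem~\ref{reldec2}, following the template set up in Proposition~\ref{scholzprop}$(c)$. Write $K$ for either $\Q_p(p^{1/p^{\infty}})$ or $\Q_p(\zeta_{p^{\infty}})$, and let $\xi\in W(\widehat{\F_p[[t]]^{1/p^{\infty}}})$ be the associated distinguished element provided by Proposition~\ref{expl} (either $[t]-p$ or $[t+1]^{p-1}+\ldots+[t+1]+1$); this $\xi$ is $\F_p[t]$-computable by Observation~\ref{compobwitt}. Setting $\Gamma=\frac{1}{p^{\infty}}\Z$ and $F=\F_p((t^{\Gamma}))$, the element $\xi$ still belongs to $W(\Oo_F)$ and remains distinguished there, so Theorem~\ref{Font} produces an untilt $L:=\text{Frac}(W(\Oo_F)/(\xi))$ of $F$, and the inclusion of Witt rings yields an embedding $\widehat{K}\hookrightarrow L$---exactly the construction used in Proposition~\ref{scholzprop}$(c)$.

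I would then verify three properties of $L$ in turn. First, $L/\widehat{K}$ (and hence $L/K$) is immediate: by Lemma~\ref{lemscholz}$(b)$ the value group and residue field of $L$ agree with those of $F$, namely $\Gamma$ and $\F_p$, and these match those of $\widehat{K}$. Second, $L$ is maximal, since $F$ is maximal as a Hahn field and maximality transfers across tilting by Lemma~\ref{tiltmaximality}. Third, $L$ is decidable in $L_{\text{val}}$: as $\xi$ is $\F_p[t]$-computable, $L$ is an $\F_p[t]$-computable untilt of $F$ in the sense of Definition~\ref{defperf}; Lisinski's Theorem~\ref{listhm} gives decidability of $F$ in $L_t$, which upgrades to decidability in $L_{\text{val}}(\F_p[t])$ by recursively expanding each constant $c_a$ (for $a\in\F_p[t]$) as a term in $t$; Theorem~\ref{reldec2}$(a)$ then delivers decidability of $L$ in $L_{\text{val}}$.

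The main point requiring genuine care is the injectivity of $W(\widehat{\F_p[[t]]^{1/p^{\infty}}})/(\xi)\hookrightarrow W(\Oo_F)/(\xi)$, which is what promotes the ring map $\widehat{K}\to L$ to a valued field extension rather than merely a homomorphism. This is essentially built into the functoriality of untilting: a perfectoid inclusion $\widehat{K}^{\flat}\hookrightarrow F$ transports, via the same distinguished element $\xi$, to an inclusion of the corresponding untilts, which is precisely the observation that Fargues--Fontaine exploit implicitly in Proposition~\ref{scholzprop}$(c)$. Once this is settled, everything else reduces to a direct citation of the three named results, so the main intellectual content is the translation of the problem across tilting rather than any novel calculation.
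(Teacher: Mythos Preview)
Your proposal is correct and follows essentially the same approach as the paper: build the untilt $L$ of the Hahn field $\F_p((t^{\Gamma}))$ via the same distinguished element $\xi$, show $L/K$ is immediate (Lemma~\ref{lemscholz}$(b)$), maximal (Lemma~\ref{tiltmaximality}), and decidable (Theorem~\ref{listhm} plus Theorem~\ref{reldec2}). Your concern about injectivity of $W(\widehat{\F_p[[t]]^{1/p^{\infty}}})/(\xi)\to W(\Oo_F)/(\xi)$ is overblown: the map is a \emph{local} ring homomorphism between valuation rings (it sends non-units to non-units because $W(\iota)$ restricts the map $W(\text{res})$ of Remark~\ref{twisteddist}), hence extends to a homomorphism between their fraction fields, and any nonzero field homomorphism is injective.
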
 
\begin{proof}
Recall from Corollaries \ref{tiltindeed}$(a)$ and \ref{expl}$(a)$ that $\widehat{\Q_p(p^{1/p^{\infty}})} ^{\flat}\cong \widehat{\F_p(\!(t)\!)^{1/p^{\infty}}}$ and that $\xi=[t]-p$. Let $ \F_p(\!(t^{\Gamma})\!)$ be the Hahn field with value group $\Gamma=\frac{1}{p^{\infty}}\Z$ and residue field $\F_p$. It is a (non-unique) maximal immediate extension of $\F_p(\!(t)\!)^{1/p^{\infty}}$. By Theorem \ref{tiltingequiv}, we may form $K=\F_p(\!(t^{\Gamma})\!)^{\sharp}$ as in the diagram below% By Proposition \ref{univwitt}, the natural inclusion $\iota: \widehat{\F_p[\![t]\!]^{1/p^{\infty}}}\hookrightarrow \F_p[\![t^{\Gamma}]\!]$ induces uniquely a ring homomorphism $W(\iota):W(\widehat{ \F_p[\![t]\!]^{1/p^{\infty}}})\to W(\F_p[\![t^{\Gamma}]\!])$. The map $W(\iota)$ descends to a \textit{local} ring homomorphism $\widehat{\Z_p[p^{1/p^{\infty}}]} \to \Oo_K$ 
\[ \begin{tikzcd}[column sep=2.5em, row sep=2.5em]
K \arrow[dash]{d}{}  & \F_p(\!(t^{\Gamma})\!) \arrow[dash]{d}{} \arrow[l, dashed, bend right=10]{}[swap]{\sharp}\\%
\widehat{\Q_p(p^{1/p^{\infty}})} \arrow[r, "\flat"] & \widehat{\F_p(\!(t)\!)^{1/p^{\infty}}}
\end{tikzcd}
\]
such that $\xi_K=[t]-p$ (see \S \ref{tiltingequivsec}). By Lemma \ref{tiltmaximality}, we see that $(K,v)$ is a maximal immediate extension of $(\Q_p(p^{1/p^{\infty}}),v_p)$. %The valued field $ \F_p(\!(t^{\Gamma})\!)$ is maximal and so is $(K,v)$ by Lemma \ref{tiltmaximality}. 
By Theorem \ref{listhm} and Theorem \ref{reldec2}, we see that $(K,v)$ is decidable in $L_{\text{val}}$. The proof for $\Q_p(\zeta_{p^{\infty}})$ is similar.
\end{proof}
\begin{rem}
It is also true that $\Q_p^{ab}$ admits a decidable maximal immediate extension (in this case unique) but this already follows  from well-known results in the model theory of algebraically maximal Kaplansky fields (see pg.4 Part (f) \cite{Kuhl}). 
\end{rem}
In spite of Corollary \ref{example}, the following is worth noting: 
\begin{rem} \label{mostmaxundec}
A tree-like construction similar to Proposition \ref{scholzprop}$(b)$ shows that there exist uncountably many pairwise elementary inequivalent maximal immediate extensions of $\Q_p(p^{1/p^{\infty}})$ (resp. $\Q_p(\zeta_{p^{\infty}})$). In particular, the valued field $\Q_p(p^{1/p^{\infty}})$ (resp. $\Q_p(\zeta_{p^{\infty}})$) has uncountably many \textit{undecidable} maximal immediate extensions. Note that the tilts of all such fields will be maximal immediate extensions of $\F_p(\!(t)\!)^{1/p^{\infty}}$ and thus will be tame fields. As a consequence of Kuhlmann's Theorem \ref{thmtamekuhl}, all of them will be decidable in $L_{\text{val}}$. Nevertheless, they will all be undecidable in $L_{\text{val}}(t)$.
\end{rem}

%that most maximal immediate extensions of $\Q_p(\zeta_{p^{\infty}})$ (resp. $\F_p(\!(t)\!)^{1/p^{\infty}}$), are undecidable in $L_{\text{val}}$ (resp. $L_{\text{val}}(t)$). 

\section{Applications: Congruences modulo $p$} \label{cong}
\subsection{Introduction}
\subsubsection{Goal}We shall now prove Theorem \ref{congmodp}, which shows the existence of an algorithm that decides whether a system of polynomial equations and inequations, defined over $\Z$, has a solution modulo $p$ over the valuation rings of our fields of interest. 
\subsubsection{Strategy}
The proof is via a local field approximation argument, using the computations of Lemma \ref{resringab}. One eventually encodes the above problem in the existential theory of the tilt in $L_{\text{val}}$, where the Anscombe-Fehm Theorem \ref{AnsFehm} applies. This should be contrasted with Corollary \ref{mainCor}, which requires decidability in the language $L_{\text{val}}(t)$ on the characteristic $p$ side. 
\subsection{From residue rings to valuation rings }
The crux of the argument lies in the following:
\bp \label{localglobal}
Let $f_i(x),g_j(x)\in \F_p[x]$ be multi-variable polynomials in $x=(x_1,...,x_m)$ for $i,j=1,...,n$. Then
$$  \F_p[t^{1/p^{\infty}}]/(t) \models \exists x \bigwedge_{1\leq i,j\leq n} (f_i(x)=0\land g_j(x)\neq 0) \iff $$ 
$$ \F_p[\![t]\!]^{1/p^{\infty}}\models \exists x\bigwedge_{1\leq i,j\leq n}(v(f_i(x))> v(g_j(x)))$$
\ep  

\begin{proof} 
First observe that
$$ \F_p[\![t]\!]^{1/p^{\infty}}/(t)\cong \varinjlim  \F_p[\![t^{1/p^n}]\!]/(t)\cong \varinjlim  \F_p[t^{1/p^n}]/(t)\cong  \F_p[t^{1/p^{\infty}}]/(t) \ \ (\dagger)$$
"$\Rightarrow$": Let $a\in ( \F_p[t^{1/p^{\infty}}]/(t))^m$ be such that $f_i(a)=0\land g_j(a)\neq 0 $, for $1\leq i,j\leq n$ and let $\tilde{a}$ be any lift of $a$ in $ \F_p[\![t]\!]^{1/p^{\infty}}$ via the isomorphism $(\dagger)$. We see that $v(f_i(\tilde{a}))\geq 1 >v(g_j(\tilde{a}))$, for all $1\leq i,j\leq n$.\\
"$\Leftarrow$": Let $b\in ( \F_p[\![t]\!]^{1/p^{\infty}})^m$ be such that $v(g_j(b))< v(f_i(b))$ for all $1\leq i,j\leq n$. Set $\gamma_1=\max \{v(g_j(b)):j=1,...,n\}$ and $\gamma_2=\min \{v(f_i(b)):i=1,...,n\}$ and consider the open interval $I=(\gamma_1,\gamma_2)\subseteq \frac{1}{p^{\infty}}\Z^{\geq 0}$. Since $\frac{1}{p^{\infty}}\Z$ is dense in $\mathbb{R}$, we can find $q\in \frac{1}{p^{\infty}}\Z$ such that $1\in qI$. 

We now make use of the fact that for each $q\in \frac{1}{p^{\infty}}\Z^{>0}$, there is an embedding 
$$\rho:  \F_p(\!(t)\!)^{1/p^{\infty}}\to \F_p(\!(t)\!)^{1/p^{\infty}}$$
which maps $t\mapsto t^q$. Indeed, if $q\in \frac{1}{p^{N}}\Z^{>0}$ for some $N\in \N$, then there exists an embedding $\rho: \F_p(\!(t)\!)^{1/p^{N}}\to  \F_p(\!(t)\!)^{1/p^{N}}$ mapping $t\mapsto t^q$, exactly as in Remark 7.9 \cite{AnscombeFehm}. Such a map can also be extended uniquely to the perfect hull $  \F_p(\!(t)\!)^{1/p^{\infty}}$.\\
  %Applying Lemma \ref{densesubgp} for $\alpha=log_{q}p$, where $q$ is a prime different than $p$, we get $n,m\in \N$ such that 
%$$nlog_q p -m\in log_q I \iff p^n\in q^mI$$
%Consider the embedding 
%$$\rho: \bar \F_p[\![t]\!]^{1/p^{\infty}}\to \bar \F_p[\![t]\!]^{1/p^{\infty}}$$
%which maps 
%$$t\mapsto t^{q^m(p-1)} $$
Now let $\rho:   \F_p(\!(t)\!)^{1/p^{\infty}}\to  \F_p(\!(t)\!)^{1/p^{\infty}}$ be as above. Then, since $f_i(x),g_j(x)\in  \F_p[x]$, we get
$$v(g_j(\rho(b)))=v(\rho(g_j(b)))=qv(g_j(b))<qv(f_i(b))=v(\rho(f_i(b)))=v(f_i(\rho(b)))$$
for all $1\leq i,j\leq n$. We may thus replace our witness $b$ with $a=\rho(b)$.

Since $1\in qI$, we get $f_i(a)=0\mod (t) \land g_j(a)\neq 0\mod (t) $, for all $i,j=1,...,n$. The reduction of $a$ modulo $(t)$, seen as a tuple in $  \F_p[t^{1/p^{\infty}}]/t$ via $(\dagger)$, is the desired witness.
\end{proof}
\begin{rem} \label{modif}
The same argument used in Lemma \ref{localglobal} shows that 
$$ k[t^{1/p^{\infty}}]/(t^{p-1})\models \exists x \bigwedge_{1\leq i,j\leq n} (f_i(x)=0\land g_j(x)\neq 0) \iff $$ 
$$ k[\![t]\!]^{1/p^{\infty}}\models \exists x\bigwedge_{1\leq i,j\leq n}(v(f_i(x))> v(g_j(x))) \iff $$
where $k=\F_p$ or $\overline {\F}_p$. The argument of Proposition \ref{localglobal} needs only a slight modification for the converse direction; one needs to take $q\in \frac{1}{p^{\infty}}\Z$ such that $(p-1)\in qI$ instead of $1\in qI$ and the same proof goes through.
\end{rem}
\subsection{Proof of Theorem \ref{congmodp}}
We may now prove the following:
\begin{Theor} \label{congmodp}
Let $K$ be any of the fields $\Q_p(p^{1/p^{\infty}}),\Q_p(\zeta_{p^{\infty}})$ or $\Q_p^{ab}$. Then the existential theory of $\mathcal{O}_K/(p)$ is decidable in $L_{\text{rings}}$.
\end{Theor}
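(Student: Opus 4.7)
The plan is to reduce the decision problem for $\mathrm{Th}_{\exists}(\Oo_K/(p))$ to the decision problem for $\mathrm{Th}_{\exists}(F)$ in $L_{\text{val}}$, where $F$ denotes the equal-characteristic perfectoid field $\widehat{\F_p((t))^{1/p^{\infty}}}$ (or $\widehat{\overline{\F}_p((t))^{1/p^{\infty}}}$ in the case $K=\Q_p^{ab}$). The latter is handled by the Anscombe--Fehm Theorem~\ref{AnsFehm}: the residue field of $F$ is $\F_p$ or $\overline{\F}_p$, and both have decidable existential $L_{\text{r}}$-theory (one is finite, the other is algebraically closed). The bridge between the two sides is Lemma~\ref{resringab} together with Proposition~\ref{localglobal} and Remark~\ref{modif}.

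First I would rewrite an arbitrary existential $L_{\text{r}}$-sentence, after passing its quantifier-free matrix to disjunctive normal form, as a finite disjunction of sentences of the shape
\[
\exists \bar x\,\Bigl(\bigwedge_i f_i(\bar x)=0\,\wedge\,\bigwedge_j g_j(\bar x)\neq 0\Bigr), \qquad f_i,g_j\in \Z[\bar x],
\]
and then reduce the coefficients modulo $p$, so that $f_i,g_j\in\F_p[\bar x]$ (viewed inside $\overline{\F}_p[\bar x]$ in the case of $\Q_p^{ab}$). Lemma~\ref{resringab} then supplies an explicit ring isomorphism identifying $\Oo_K/(p)$ with $\F_p[t^{1/p^{\infty}}]/(t)$, $\F_p[t^{1/p^{\infty}}]/(t^{p-1})$, or $\overline{\F}_p[t^{1/p^{\infty}}]/(t^{p-1})$, respectively. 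Proposition~\ref{localglobal} (respectively Remark~\ref{modif}) converts the resulting satisfaction question into the question whether
\[
F\models \exists \bar x\,\Bigl(\bigwedge_k x_k\in\Oo\,\wedge\,\bigwedge_{i,j} v(f_i(\bar x))>v(g_j(\bar x))\Bigr).
\]

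To invoke Anscombe--Fehm, it remains to verify that the displayed condition is equivalent to an existential $L_{\text{val}}$-sentence over $F$. The key observation is that $v(a)>v(b)$ is expressible by the quantifier-free-with-$\exists$ $L_{\text{val}}$-formula
\[
(a=0\wedge b\neq 0)\,\vee\,\bigl(a\neq 0\wedge b\neq 0\wedge \exists y\,(ya=b\wedge y\notin \Oo)\bigr),
\]
which is existential because $y\notin\Oo$ is the negation of an atomic formula (and thus quantifier-free) in $L_{\text{val}}$. The resulting syntactic translation on existential sentences is effective. Since $F$ is non-trivially valued, henselian (indeed complete), equal characteristic, with the residue field already noted, Theorem~\ref{AnsFehm} yields the decidability of $\mathrm{Th}_{\exists}(F)$ in $L_{\text{val}}$; composing with the translation completes the proof. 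The only delicate point is this existential encoding of the valuation comparison, which is resolved by the formula just displayed; everything else is a matter of effective syntactic bookkeeping.
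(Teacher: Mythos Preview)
Your proof is correct and follows essentially the same route as the paper: Lemma~\ref{resringab} identifies $\Oo_K/(p)$ with the appropriate quotient of $\F_p[t^{1/p^\infty}]$ (or $\overline{\F}_p[t^{1/p^\infty}]$), Proposition~\ref{localglobal} (respectively Remark~\ref{modif}) translates existential sentences over that quotient into existential $L_{\text{val}}$-sentences over $\F_p((t))^{1/p^\infty}$ (respectively $\overline{\F}_p((t))^{1/p^\infty}$), and Anscombe--Fehm (Theorem~\ref{AnsFehm}) finishes. The only extra content in your write-up is the explicit existential encoding of $v(a)>v(b)$ and the bookkeeping about effectivity, which the paper leaves implicit; note also that you may work directly with $\F_p((t))^{1/p^\infty}$ rather than its completion, since Proposition~\ref{localglobal} is already phrased for $\F_p[[t]]^{1/p^\infty}$.
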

\begin{proof}
By the Anscombe-Fehm Theorem \ref{AnsFehm}, the valued fields $(\F_p(\!(t)\!)^{1/p^{\infty}},v_t)$ and $(\overline {\F}_p(\!(t)\!)^{1/p^{\infty}},v_t)$ are $\exists$-decidable in $L_{\text{val}}$. For $\Q_p(p^{1/p^{\infty}})$ the conclusion now follows from Lemma \ref{resringab}$(a)$ and Proposition \ref{localglobal}. For the other two fields, one has to use Lemma \ref{resringab}$(b),(c)$ and Remark \ref{modif}.
\end{proof}
\begin{rem}
Note that Corollary \ref{mainsCor} requires decidability in the language $L_{\text{val}}(t)$ on the characteristic $p$ side. However, for the purposes of Theorem \ref{congmodp}, the Anscombe-Fehm results in $L_{\text{val}}$ turned out to be sufficient. This became possible because of Proposition \ref{localglobal}, which "eliminates" any reference to $t$.
\end{rem}

\section{Final remarks} \label{final}

\subsection{An almost decidable field}
In \S \ref{uniformitydecidab}, we emphasized that \textit{uniform} decidability of $(\Oo_K/(p^n))_{n\in \omega}$ is key for the decidability of $(K,v)$. Indeed, at least by assuming the decidability of $\widehat{\F_p(\!(t)\!)^{1/p^{\infty}}}$ in $L_{\text{val}}(t)$, Proposition \ref{scholzprop}$(b)$ allows us to produce undecidable valued fields $(K,v)$ with each individual residue ring $\Oo_K/(p^n)$ being decidable.
\bp \label{almostdec}
Assume $\widehat{\F_p(\!(t)\!)^{1/p^{\infty}}}$ is decidable in $L_{\text{val}}(t)$. Then there exists a mixed characteristic henselian valued field $(K,v)$ such that:
\begin{enumerate}
\item The valued field $(K,v)$ is undecidable in $L_{\text{val}}$.
\item For each $n\in \N$, the ring $\Oo_K/(p^n)$ is decidable in $L_{\text{rings}}$.
\item The value group $(\Gamma_v,vp)$ decidable in $L_{\text{oag}}$. 

\end{enumerate}

\ep 

\begin{proof}
For the convenience of the reader, we first review the construction from proof of Proposition \ref{scholzprop}$(b)$. Given $\alpha \in 2^{\omega}$, define inductively:
\begin{enumerate}
\item $K_0=\Q_p$ and $\pi_0=p$.
\item $K_{\alpha\restriction n}=K_{\alpha\restriction (n-1)}(((1+p)^{\alpha(n-1)}\cdot \pi_{\alpha\restriction (n-1)})^{1/p} )$ and $\pi_{\alpha\restriction n}=((1+p)^{\alpha(n-1)}\cdot \pi_{\alpha\restriction (n-1)})^{1/p}$. 
\end{enumerate}
We let $K_{\alpha}=\bigcup_{n\in \omega} K_{\alpha \restriction n}$. Set $\overline{\alpha}_n=\sum_{k=0}^{n-1} \alpha(k)\cdot p^k$. Claim 3 of Proposition \ref{scholzprop} shows that $\widehat{K_{\alpha}}^{\flat}= \widehat{\F_p(\!(t)\!)^{1/p^{\infty}}}$. We shall argue below that any $\widehat{K_{\alpha}}$ satisfies conditions (2) and (3). On the other hand, since $\widehat{ K_{\alpha}}\not \equiv \widehat{K_{\beta}}$ for $\alpha \neq \beta$ (see Claim 2, Proposition \ref{scholzprop}$(b)$), we will have that some $\widehat{K_{\alpha}}$ is undecidable in $L_{\text{val}}$. 

Fix $\alpha \in 2^{\omega}$, set $\overline{a}= \sum_{k\geq 0} \alpha(k)\cdot p^k\in \Z_p$ and $\overline{\alpha}_n= \sum_{k=0}^{n-1} \alpha(k)\cdot p^k \in \Z$ for all $n\in \N^{>0}$ ($\overline{\alpha}_0=0$ by convention). One sees that $(1+p)^{p^n}\equiv 1 \mod p^{n+1} \Z_p$, for all $n\in \N$. It follows that the limit $\lim_{n\to \infty} (1+p)^{\overline{\alpha}_n}$ exists in $\Z_p$ and we denote it by $(1+p)^{\overline{a}}$. Fix $n\in \N^{> 0}$ for the rest of the proof.\\
\textbf{Claim: } We have $t^{\sharp}= (1+p)^{\overline{a}} \cdot p$ and  $t^{\sharp} \equiv (1+p)^{\overline{\alpha}_{n-1}}\cdot p \mod p^{n} \Oo_{K_{\alpha}}$.
\begin{proof}
%By the construction of Proposition \ref{scholzprop}, we have $\pi_{\alpha(0)}=\zeta_p-1$ and $\pi_{\alpha(n)}^p=u^{\alpha(n)}\pi_{\alpha(n-1)}$ for each $n\in \N$. 
We have that $t=(\pi_{\alpha \restriction 0}+(p),\pi_{\alpha \restriction 1}+(p),...)$, via the identification $\widehat{\F_p[\![t]\!]^{1/p^{\infty}}}\cong \varprojlim_{\Phi} \Oo_{K_{\alpha}}/(p)$ (see Remark \ref{claim2t+1}). Since $\pi_{\alpha\restriction n}^{p^n}=(1+p)^{\overline{\alpha}_n}\cdot  p$, we compute $t^{\sharp}=\lim_{n\to \infty} \pi_{\alpha(n)}^{p^n}= \lim_{n\to \infty}(1+p)^{\overline{\alpha}_n} \cdot p=(1+p)^{\overline{a}} \cdot p$. Since $(1+p)^{p^{n-1}}\equiv 1 \mod p^{n} \Oo_{K_{\alpha}}$, we also get $t^{\sharp} \equiv (1+p)^{\overline{\alpha}_{n-1}}\cdot p \mod p^{n} \Oo_{K_{\alpha}}$. 
\qedhere $_{\textit{Claim}}$ \end{proof}
As a consequence, we may take $\xi_{\alpha} = [t]-(1+p)^{\overline{a}} \cdot p$ as a generator of the ideal of $\textbf{A}_{\text{inf}}$ associated to $K_{\alpha}$. Consider also the distinguished element $\xi':=[t]-(1+p)^{\overline{\alpha}_{n-1}}\cdot p \in \textbf{A}_{\text{inf}}$ and the associated untilt $(K',v')$. We have an equality of ideals $(p^n,\xi_{\alpha})=(p^n,\xi')$ in $\textbf{A}_{\text{inf}}$ and thus an isomorphism of rings $\Oo_{K_{\alpha}}/(p^n)\cong \Oo_{K'}/(p^n)$.%equality of ideals $(\xi_{\alpha},p^n)=(\xi',p^n)$ in $\textbf{A}_{inf}$.

Having assumed that the valued field $(\widehat{\F_p(\!(t)\!)^{1/p^{\infty}}},v_t)$ is decidable in $L_{\text{val}}(t)$ and since $\xi'$ is $\F_p[t]$-computable, we get that the valued field $(K',v')$ is decidable in $L_{\text{val}}$ by Theorem \ref{reldec}. In particular, the ring $\Oo_{K_{\alpha}}/(p^n)\cong \Oo_{K'}/(p^n)$ is decidable in $L_{\text{rings}}$. The value group $(\Gamma_{\alpha},v_{\alpha} p)\cong (\frac{1}{p^{\infty}}\Z,1)$ is also decidable in $L_{\text{oag}}$ with a constant symbol for $1$. Since $n$ was arbitrary, this concludes the proof.
\end{proof}
\begin{rem}
It seems plausible that a similar construction can be carried out with $\F_p(\!(t^{\Gamma})\!)$ instead of $\widehat{\F_p(\!(t)\!)^{1/p^{\infty}}}$, where $\Gamma=\frac{1}{p^{\infty}}\Z$, thereby recovering the above result unconditionally. Nevertheless, we do not have a working example at present.
\end{rem}

\subsection{Reversing the direction} \label{rev}
\subsubsection{} Given that our understanding of decidability problems in characteristic $p$ is limited, our philosophy of reducing decidability questions from mixed characteristic to positive characteristic may seem impractical. Nevertheless, we have already seen two applications in \S \ref{tamesec} and \S \ref{cong}. We also mention another application in \cite{KK3}, which proves an \textit{undecidability} result for the asymptotic theory of $\{ K: [K:\Q_p]<\infty\}$ in the language $L_{\text{val}}$ with a cross-section, again by transposing a result in positive characteristic.
\subsubsection{ }
We shall now demonstrate that the characteristic $p$ difficulties in the language $L_{\text{val}}(t)$ are already encoded in the mixed characteristic setting, by showing a relative decidability result in the opposite direction:
\bp \label{from0top}
If $\Q_p(p^{1/p^{\infty}})$ is $\forall^1 \exists$-decidable in $L_{\text{val}}$, then $ \F_p[\![t]\!]^{1/p^{\infty}}$ is $\exists^+$-decidable in $L_{\text{val}}(t)$. 
\ep 
\begin{proof}
We may focus on $L_{\text{rings}}(t)$ sentences since $x\in \Oo$ is $\exists^+$-definable in $L_{\text{rings}}(t)$ (cf. footnote \ref{definabilityfoot}). Let $f_i(X_1,...,X_m,T) \in \F_p[X_1,...,X_m,T]$ for $i=1,...,n$. We claim that 
$$  \F_p[\![t]\!]^{1/p^{\infty}} \models \exists x_1,...,x_m(\bigwedge_{1\leq i\leq n} f_i(x_1,...,x_m,t)=0)\iff $$
$$ \F_p[t^{1/p^{\infty}}]/(t)\models \forall y\in \mathfrak{m} \exists x_1,...,x_m(\bigwedge_{1\leq i\leq n} f_i(x_1,...,x_m,y)=0)$$
It is enough to prove the claim since $\Z_p[p^{1/p^{\infty}}]/(p)\cong \F_p[t^{1/p^{\infty}}]/(t)$ by Lemma \ref{resringab}$(a)$. We leave it to the reader to write down the $\forall^1\exists$-statement about $\Q_p(p^{1/p^{\infty}})$ which is equivalent to the one about $\F_p[t^{1/p^{\infty}}]/(t)$ written above.\\
$"\Rightarrow":$ Let $(\alpha_1,...,\alpha_m)\in (\F_p[\![t]\!]^{1/p^{\infty}})^m$ be such that 
$$ f_1(\alpha_1,....,\alpha_m,t)=f_2(\alpha_1,....,\alpha_m,t)=...=f_n(\alpha_1,....,\alpha_m,t)=0$$ 
Let $y\in \mathfrak{m}\subset   \F_p[\![t]\!]^{1/p^{\infty}}$ and $ \kappa \in \N$ be such that $y^{p^{\kappa}}\in  \F_p[\![t]\!]$. Then there exists a ring endomorphism $\rho$ on $ \F_p[\![t]\!]$ mapping $t\mapsto y^{p^{\kappa}}$, which extends uniquely to $  \F_p[\![t]\!]^{1/p^{\infty}}$. Let $\beta_i:=\rho(\alpha_i)$ for $i=1,...,m$. Since $f_i(X_1,...,X_m,T) \in \F_p[X,T]$, we get 
$$f_1(\beta_1,...,\beta_m,y^{p^{\kappa}})=f_2(\beta_1,...,\beta_m,y^{p^{\kappa}})=...=f_n(\beta_1,...,\beta_m,y^{p^{\kappa}})=0 $$ 
and thus 
$$f_1(\beta_1^{1/p^{\kappa}},...,\beta_m^{1/p^{\kappa}},y)=f_2(\beta_1^{1/p^{\kappa}},...,\beta_m^{1/p^{\kappa}},y)=...=f_n(\beta_1^{1/p^{\kappa}},...,\beta_m^{1/p^{\kappa}},y)=0$$ 
In particular, the tuple $(x_1,...,x_m):=(\beta_1^{1/p^{\kappa}},...,\beta_m^{1/p^{\kappa}})$ is a solution modulo $(t)$ to the above system of equations.\\
$"\Leftarrow"$ A generalized version of Greenberg's Theorem due to Moret-Bailly (see Corollary 1.2.2 \cite{LMB}) shows that
$$f_1(x_1,...,x_m,t)=f_2(x_1,...,x_m,t)=...=f_n(x_1,...,x_m,t)=0$$ 
has a solution in $  \F_p[\![t]\!]^{1/p^{\infty}}$ if and only if it has a solution modulo $(t^N)$, for all $N\in \N$. Equivalently, if and only it has a solution modulo $(t^{p^N})$ for all $N\in \N$. Using the $N$-th iterated Frobenius, we see that for any $N\in \N$ we have
$$   \F_p[\![t]\!]^{1/p^{\infty}}\models \exists x_1,...,x_m(\bigwedge_{1\leq i\leq n} f_i(x_1,...,x_m,t)=0 \mod t^{p^N})\iff$$
$$   \F_p[\![t]\!]^{1/p^{\infty}}\models \exists x_1,...,x_m (\bigwedge_{1\leq i\leq n} f_i(x_1,...,x_m,t^{1/p^N})=0 \mod t)$$
and the latter is true by assumption, for any $N\in \N$.
\end{proof}
\begin{rem}
By an identical argument, one can prove a similar result for $\Q_p(\zeta_{p^{\infty}})$ or $\Q_p^{ab}$. In the case of the latter, one gets that $\overline {\F}_p[\![t]\!]^{1/p^{\infty}}$ is $\exists^+$-decidable in $L_{\text{val}}(t)$, provided that $\Q_p^{ab}$ is $\forall^1 \exists$-decidable in $L_{\text{val}}$.
\end{rem}
It would be nice to have a version of Proposition \ref{from0top} for the full theories. Together with Corollary \ref{mainsCor}, this would yield a Turing equivalence between the theories of $\Q_p(p^{1/p^{\infty}})$ and $\F_p(\!(t)\!)^{1/p^{\infty}}$. Nevertheless, Proposition \ref{from0top} still suggests that if we eventually want to understand the theories of $\Q_p(p^{1/p^{\infty}})$, $\Q_p(\zeta_{p^{\infty}})$ and $\Q_p^{ab}$ (even modest parts of their theories), we would have to face certain characteristic $p$ difficulties, posed in Question \ref{question} below:
\bq \label{question}
$(a)$ Is $\text{Th}_{\exists^+}(\F_p[\![t]\!]^{1/p^{\infty}})$ decidable in $L_{\text{val}}(t)$?\\
$(b)$ Is $\text{Th}_{\exists^+}(\overline {\F}_p[\![t]\!]^{1/p^{\infty}})$ decidable in $L_{\text{val}}(t)$?
\eq

\subsection*{Acknowledgements}
A great mathematical debt is owed to  E. Hrushovski from whom I learned many of the crucial ideas in this work. I would like to thank J. Koenigsmann for insightful suggestions and his encouragement throughout. Many thanks to T. Scanlon for his hospitality while I was a visitor at UC Berkeley and for many inspiring discussions. I also thank all participants in a study group that took place at Fields Institute in Fall 2021, for their meticulous readings of earlier versions of this work and for pointing out several local corrections. I finally wish to thank two anonymous referees for their thorough reviews.

\bibliographystyle{alphaurl}
\bibliography{references2}
\Addresses
\end{document}